\documentclass[a4paper,11pt]{amsart}

\usepackage{amsmath,amssymb,amsthm,latexsym,amscd,mathrsfs}
\usepackage{indentfirst}
\usepackage{stmaryrd}
\usepackage{graphicx}
\usepackage{extarrows}
\usepackage{multirow}
\usepackage{latexsym}
\usepackage{amsfonts}
\usepackage{xcolor}
\usepackage{pictexwd,dcpic}
\usepackage{graphicx}
\usepackage{psfrag}
\usepackage{hyperref}
\usepackage{comment}
\usepackage{enumerate}
\usepackage{dutchcal}

\numberwithin{equation}{section} \theoremstyle{plain}
\newtheorem{thm}{Theorem}[section]

\newtheorem{lem}[thm]{Lemma}
\newtheorem{cor}[thm]{Corollary}
\newtheorem{defn}[thm]{Definition}

\makeatletter

\newcommand{\trace}{\mathrm{Tr}}
\newcommand{\Ric}{\mathrm{Ric}}
\newcommand{\Hn}{\mathcal{H}^{\mathrm{N}}}
\newcommand{\Ht}{\mathcal{H}^{\mathrm{T}}}

\makeatother

\title{Normal-Yang-Mills and Tangent-Yang-Mills submanifolds}

\author[J.Q. Ge]{Jianquan Ge}
\address{School of Mathematical Sciences, Laboratory of Mathematics and Complex Systems, Beijing Normal University, Beijing 100875, P.R. CHINA.}
\email{jqge@bnu.edu.cn}

\author[L.X. Xiao]{Lixin Xiao$^{*}$}
\address{School of Mathematical Sciences, Laboratory of Mathematics and Complex Systems, Beijing Normal University, Beijing 100875, P.R. CHINA.}
\email{lixinxiao@mail.bnu.edu.cn}

\author[W.J. Zhang]{Wenjin Zhang}
\address{School of Mathematical Sciences, Laboratory of Mathematics and Complex Systems, Beijing Normal University, Beijing 100875, P.R. CHINA.}
\email{wjinzhang@mail.bnu.edu.cn}

\subjclass[2020]{Primary 53C40, 53C42; Secondary 58E15, 53C07}
\keywords{Yang-Mills, isoparametric, variation, submanifold}
\thanks {$^{*}$ the corresponding author.}
\thanks{J. Q. Ge is partially supported by NSFC (No. 12571049) and the Fundamental Research Funds for the Central Universities.}

\begin{document}
\maketitle


\begin{abstract}
This paper investigates the variational problems associated with the $L^2$-norms of the normal and tangent curvature tensors for submanifolds immersed in a unit sphere. We define the critical points of these functionals under normal variations as Normal-Yang-Mills and Tangent-Yang-Mills submanifolds, for which we explicitly establish the Euler-Lagrange equations in terms of the second fundamental form. Furthermore, by investigating the focal submanifolds of OT-FKM isoparametric hypersurfaces, we construct infinitely many non-trivial examples of both Normal-Yang-Mills and Tangent-Yang-Mills submanifolds. Notably, the curvature tensors of these examples generally do not satisfy the classical Yang-Mills equations.
\end{abstract}

\section{Introduction}

Let $E$ be a vector bundle over an $n$-dimensional Riemannian manifold $M^n$. In classical Yang-Mills theory \cite{B-L, F-U, R-S-S}, the Yang-Mills functional is defined as the $L^2$-norm of the curvature $\Omega$ associated with a connection $\nabla$ on $E$:
\begin{equation*}
    \mathscr{Y\!\!M}(\nabla) = \int_{M} \| \Omega \|^2 dvol.
\end{equation*}
Here, the variation is taken with respect to the connection $\nabla$. The critical points of this functional, termed Yang-Mills connections, are governed by the well-known first variational formulas:
\begin{itemize}
    \item \textbf{Integral form:} $\int_{M}\langle d_{\nabla}\tau, \Omega \rangle=0$ for any $E$-valued $1$-form $\tau$, where $d_{\nabla}$ denotes the exterior covariant derivative with respect to $\nabla$ on $E$.
    \item \textbf{Differential form:} $\delta_{\nabla}\Omega=0$, where $\delta_{\nabla}$ is the co-differential operator.
\end{itemize}

When $M$ is immersed into the unit sphere $S^{n+p}$, its geometry is characterized by two fundamental curvature tensors: the Riemannian curvature $\Omega$ of the tangent bundle and the normal curvature $\Omega^{\bot}$ of the normal bundle. The relationship between classical Yang-Mills connections and submanifold geometry was established by Tian \cite{T} in $2000$, who demonstrated that Yang-Mills connections are closely related to minimal submanifolds. This result motivated further research into curvature functionals on submanifolds. For instance, Chen and Zhou \cite{C-Z} initiated the study of $p$-Yang-Mills fields in $2007$, defining the functional as the $L^p$-norm of the connection curvature and establishing gap properties. Subsequently, in $2012$, Jia and Zhou \cite{J-Z} introduced a broader class of functionals to investigate $F$-Yang-Mills fields, focusing on the conditions under which certain vanishing theorems hold.

While the aforementioned studies focus on variations of the connection, an alternative variational problem for submanifolds treats the immersion itself as the variable. Sakamoto \cite{S} applied this approach to the normal curvature functional, deriving the first variation formula and expressing the resulting Euler-Lagrange equations in terms of divergence operators acting on tensor fields. In this paper, to bridge this variational problem with classical Yang-Mills theory, we investigate the curvature functionals defined by the $L^2$-norms of the Riemannian curvature and normal curvature:
\begin{equation*}
L = \int_{M} \|\Omega\|^2 dvol, \quad L^{\bot} = \int_{M} \|\Omega^{\bot}\|^2 dvol.
\end{equation*}

For both functionals, we establish explicit Euler-Lagrange equations in terms of the second fundamental form.

\begin{defn}
Let $M^n$ be a submanifold of $S^{n+p}$ with the immersion $F_{0}:M\to S^{n+p}$. A smooth map $F: M \times I \to S^{n+p}$, where $I=(-\epsilon, \epsilon)$ for some $\epsilon > 0$, is called a variation of $M$ if $F_{0}=F(\cdot,0)$, and $F_{t}:=F(\cdot,t)$ is an immersion from $M$ to $S^{n+p}$ for each $t\in I$. Moreover, $F$ is called a normal variation if its variation vector field $\frac{\partial F}{\partial t}\big|_{t=0}$ is normal to $M$. All variations in this paper are assumed to be compactly supported, i.e., $\overline{\{F_{t}\neq F_0\}}$ is compact in the interior of $M$, so that we can integrate by parts regardless of whether $M$ is closed or not.
\end{defn}

\begin{defn}
Let $M^n$ be a submanifold of $S^{n+p}$.

(1) $M$ is called a Normal-Yang-Mills submanifold if $\frac{d}{dt}\big|_{t=0}L^{\bot}(F_{t})=0$ for all normal variations.

(2) $M$ is called a Tangent-Yang-Mills submanifold if $\frac{d}{dt}\big|_{t=0}L(F_{t})=0$ for all normal variations.
\end{defn}

Let $\{\theta_{i}\}_{i=1}^{n}$ be a local orthonormal coframe, $\{e_i\}_{i=1}^{n}$ be the dual local tangent frame and $\{e_{\alpha}\}_{\alpha=n+1}^{n+p}$ be a local normal orthonormal frame. The second fundamental form is $h=\sum\limits_{\alpha,i,j}h_{ij}^{\alpha}\theta_i \otimes \theta_j e_{\alpha}$,  where $h^{\alpha}=\sum\limits_{i,j}h_{ij}^{\alpha}\theta_i \otimes e_j $ is the shape operator in direction $e_{\alpha}$, and the mean curvature vector is $H=\sum\limits_{\alpha,i}h_{ii}^{\alpha}e_{\alpha}$. Let $\nabla_i$ be the covariant derivative along $e_i$. We define two normal vector fields $\Hn$ and $\Ht$ along the immersion:
\begin{equation*}
\begin{aligned}
    \Hn :=\sum_{\alpha}\left(\sum_{\beta,i,j,k}\nabla_i \left( h^\beta_{ik} \nabla_j [h^\alpha, h^\beta]_{kj} \right)+\sum_{\beta,\gamma}\trace(h^{\alpha}h^{\beta}h^{\gamma}h^{\beta}h^{\gamma}-h^{\alpha}h^{\beta}h^{\gamma}h^{\gamma}h^{\beta})\right)e_{\alpha},
\end{aligned}
\end{equation*}
and
\begin{multline*}
    \Ht :=\sum_{\alpha}\Biggl(\sum_{i,j,k}h^{\alpha}_{ij}\nabla_k (\nabla_{j}\Ric_{ik}-\nabla_{k}\Ric_{ij})-2\sum_{\beta}\trace(h^{\alpha}h^{\beta}h^{\beta})\\
    +\sum_{\beta,\gamma}\trace(h^{\alpha}h^{\beta}h^{\gamma})\cdot\trace(h^{\beta}h^{\gamma})-\sum_{\beta,\gamma}\trace(h^{\alpha}h^{\beta}h^{\gamma}h^{\beta}h^{\gamma})\Biggr)e_{\alpha}.
\end{multline*}

Utilizing these notions, the first variations of the curvature functionals are determined as follows.

\begin{thm} \label{thm-1}
Let $M^n$ be a submanifold of $S^{n+p}$. For a normal variation $F$, the first variation corresponding to the normal curvature functional $L^{\bot}$ is
\begin{align*}
    \frac{d}{dt}\bigg|_{t=0}L^{\bot}(F_{t})=\int_{M}\left\langle \frac{\partial F}{\partial t}\bigg|_{t=0},-4\Hn-\|\Omega^{\bot}\|^{2}H \right\rangle dvol.
\end{align*}
Consequently, $M$ is a Normal-Yang-Mills submanifold if and only if it satisfies the Euler-Lagrange equation: 
\begin{equation*}
4\Hn+\|\Omega^{\bot}\|^{2}H=0.
\end{equation*} 

In particular, if $M$ is a minimal submanifold, this condition reduces to $\Hn=0$.
\end{thm}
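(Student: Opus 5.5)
We plan a direct computation of the first variation, working in local frames along the family $F_t$. Write the variation field as $V=\frac{\partial F}{\partial t}\big|_{t=0}=\sum_\alpha f^\alpha e_\alpha$. By the Ricci equation for $M\subset S^{n+p}$ (the ambient curvature contributes nothing to the normal curvature, since the sphere has constant curvature), the components of $\Omega^{\bot}$ are $R^{\bot}_{\alpha\beta ij}=\pm[h^\alpha,h^\beta]_{ij}$. Hence, up to the fixed normalisation of the norm used in the paper,
\begin{equation*}
\|\Omega^{\bot}\|^2=c\sum_{\alpha,\beta,i,j,k,l}g^{ik}g^{jl}\,[h^\alpha,h^\beta]_{ij}[h^\alpha,h^\beta]_{kl}.
\end{equation*}
This writes $L^{\bot}$ purely in terms of the metric and the second fundamental form. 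The derivative $\frac{d}{dt}L^{\bot}$ then splits into three contributions: the variation of $dvol$, the variation of the inverse metric $g^{ik}$, and the variation of the $h^\alpha_{ij}$.

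We will use the standard evolution formulas for a normal variation in a space form of curvature $1$:
\begin{equation*}
\partial_t g_{ij}=-2\sum_\alpha f^\alpha h^\alpha_{ij},\qquad \partial_t\,dvol=-\langle V,H\rangle\,dvol,
\end{equation*}
\begin{equation*}
\partial_t h^\alpha_{ij}=\nabla_i\nabla_j f^\alpha-\sum_{\beta,k} f^\beta h^\beta_{ik}h^\alpha_{kj}+f^\alpha\delta_{ij}.
\end{equation*}
Here the normal frame is transported so that its $t$-derivative has no normal component. Any residual rotation of the normal frame acts on the pair $(\alpha,\beta)$ by an orthogonal matrix and leaves $\|\Omega^{\bot}\|^2$ invariant, so it drops out. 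The same invariance argument, applied to the tangent frame, justifies using the coordinate derivatives above.

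With these formulas the three contributions are handled as follows.
\begin{enumerate}
\item The volume term gives $-\|\Omega^{\bot}\|^2\langle V,H\rangle$ immediately.
\item The term $f^\alpha\delta_{ij}$ drops out, because the identity commutes with every $h^\beta$.
\item The metric variation and the term $-f^\beta h^\beta h^\alpha$ are both algebraic in $f$. Expanding $\partial_t[h^\alpha,h^\beta]$ and contracting with $[h^\alpha,h^\beta]$, then using cyclicity of trace and antisymmetry of the commutator, should collapse these to a multiple of $\sum_{\beta,\gamma}\trace(h^\alpha h^\beta h^\gamma h^\beta h^\gamma-h^\alpha h^\beta h^\gamma h^\gamma h^\beta)\,f^\alpha$.
\item The Hessian term yields $\sum\trace\big([\nabla^2 f^\alpha,h^\beta][h^\alpha,h^\beta]\big)$. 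Rewriting it as $2\,\nabla_i\nabla_j f^\alpha\,(h^\beta_{jk}[h^\alpha,h^\beta]_{ki}-\dots)$ and integrating by parts twice (the variation has compact support) moves both derivatives off $f^\alpha$. This produces $\nabla_i\big(h^\beta_{ik}\nabla_j[h^\alpha,h^\beta]_{kj}\big)$. The terms where a derivative lands on $h^\beta$ should cancel by the Codazzi equation $\nabla_j h^\beta_{ik}=\nabla_i h^\beta_{jk}$, together with the symmetry of $\nabla_i\nabla_j f$ up to the normal-curvature commutator. That commutator itself produces further $h^5$ terms, which must be folded into item 3.
\end{enumerate}

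Collecting the contributions should give $\int\langle V,-4\Hn-\|\Omega^{\bot}\|^2H\rangle\,dvol$. The Euler–Lagrange equation then follows because $V$ is an arbitrary compactly supported normal field, and in the minimal case $H=0$ it reduces to $\Hn=0$.

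The main obstacle is the bookkeeping in the last step. This means tracking signs and factors so that the overall coefficient is exactly $4$. It also means keeping the quintic terms from the metric variation, from the $-fhh$ term, and from the normal-curvature commutator $\nabla_i\nabla_j f^\alpha-\nabla_j\nabla_i f^\alpha=\sum R^{\bot}_{\alpha\beta ij}f^\beta$ from conflicting. To guard against errors, we would first verify the final formula on a test case such as a product of spheres with $p\ge2$ before trusting the general contraction.
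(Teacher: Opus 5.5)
Your plan is sound and, carried out carefully, reproduces the paper's formula. It is, however, a genuinely different route.

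\textbf{The paper's route.} The paper never differentiates $h$. Working in moving frames, it shows $\partial_t\Omega^{\bot}=-d_{\nabla^{\bot}}\tau^{\bot}+[a^{\bot},\Omega^{\bot}]$ with $\tau^{\bot}_{\alpha\beta}=\sum_i(a_{\alpha i}\theta_{i\beta}-\theta_{\alpha i}a_{i\beta})$. This is only first order in $V$, because $a_{i\alpha}=\nabla^{\bot}_{e_i}a_\alpha$. One integration by parts turns $\langle d_{\nabla^{\bot}}\tau^{\bot},\Omega^{\bot}\rangle$ into $\langle\tau^{\bot},\delta_{\nabla^{\bot}}\Omega^{\bot}\rangle$, where $(\delta_{\nabla^{\bot}}\Omega^{\bot})_{\alpha\beta}(e_k)=\sum_l\nabla_l[h^\alpha,h^\beta]_{kl}$ is the classical Yang--Mills expression. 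A second integration by parts gives the derivative part of $\Hn$ directly. The quintic part comes solely from the change of the metric on $2$-forms (the $\partial_t e_i$ term), so no Codazzi cancellations or curvature commutators occur.

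\textbf{Your route.} The terms are distributed differently, and item 3 cannot close on its own. Write $A_\alpha=g^{-1}h^\alpha$, so that the $g^{-1}$ hidden inside $[h^\alpha,h^\beta]_{ij}$ is varied as well, not only $g^{ik}g^{jl}$. Then $\partial_t\|\Omega^{\bot}\|^2=-2\sum_{\alpha,\beta}\trace\bigl([h^\beta,[h^\alpha,h^\beta]]\,\partial_tA_\alpha\bigr)$. Set
\[
u_\alpha=\sum_{\beta,\gamma}\trace(h^\alpha h^\beta h^\gamma h^\beta h^\gamma),\quad
v_\alpha=\sum_{\beta,\gamma}\trace(h^\alpha h^\beta h^\gamma h^\gamma h^\beta),\quad
w_\alpha=\sum_{\beta,\gamma}\trace(h^\alpha h^\beta h^\beta h^\gamma h^\gamma).
\]
\begin{enumerate}
\item The metric terms and the $-fhh$ term together give $-2f^\alpha(2u_\alpha-v_\alpha-w_\alpha)$, which is not a multiple of $u_\alpha-v_\alpha$.
\item After the two integrations by parts, the Hessian term gives $-4f^\alpha\sum\nabla_i\bigl(h^\beta_{ik}\nabla_j[h^\alpha,h^\beta]_{kj}\bigr)$.
\item The Hessian term also gives a curvature-commutator contribution; in your arrangement this is the normal-curvature commutator of $\nabla^2V$. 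It equals $2f^\alpha\sum_{\beta,\gamma}\trace\bigl(h^\gamma[h^\alpha,h^\beta][h^\gamma,h^\beta]\bigr)=2f^\alpha(v_\alpha-w_\alpha)$.
\end{enumerate}
Only the total quintic part, $-4f^\alpha(u_\alpha-v_\alpha)$, matches the quintic part of $-4\Hn$. So the commutator is not a side effect to be absorbed; it is exactly what cancels $w_\alpha$.

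\textbf{What each approach buys.} Your route is more elementary: it needs only the standard evolution formulas for $g$, $dvol$ and $h$, and none of the moving-frame lemmas of Section 2. The paper's route makes $\delta_{\nabla^{\bot}}\Omega^{\bot}$ appear explicitly, which is what permits the comparison with the classical Yang--Mills equation in Section 4. The same argument, with $\tau$ in place of $\tau^{\bot}$, also handles $L$.

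\textbf{Your test case.} Standard products of spheres have commuting shape operators, hence flat normal bundle. Both sides of the formula then vanish identically, so that test cannot detect sign or coefficient errors.
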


If the normal bundle is flat, i.e., $\Omega^{\bot}=0$, then the shape operators commute with each other and thus $\Hn=0$. It follows from the theorem above the following.
\begin{cor}
A submanifold $M^n$ of $S^{n+p}$ with flat normal bundle is a Normal-Yang-Mills submanifold. In particular, any hypersurface $M^n$ of $S^{n+1}$ is Normal-Yang-Mills.
\end{cor}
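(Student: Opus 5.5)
The plan is to read the corollary off Theorem~\ref{thm-1}: I will check that, for a submanifold with flat normal bundle, both terms in the Euler-Lagrange expression $4\Hn+\|\Omega^{\bot}\|^{2}H$ vanish identically. The first variation formula of Theorem~\ref{thm-1} then gives $\frac{d}{dt}\big|_{t=0}L^{\bot}(F_t)=0$ for every normal variation $F$, which is the definition of a Normal-Yang-Mills submanifold.

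\textbf{Step 1: the Ricci equation.} Since the ambient space $S^{n+p}$ has constant sectional curvature, its curvature tensor contributes no normal-curvature term. The Ricci equation therefore reads
\begin{equation*}
R^{\bot}_{\alpha\beta ij}=\sum_k\left(h^\alpha_{ik}h^\beta_{kj}-h^\beta_{ik}h^\alpha_{kj}\right)=[h^\alpha,h^\beta]_{ij}.
\end{equation*}
Hence $\Omega^{\bot}=0$ holds if and only if all shape operators commute pairwise, i.e.\ $[h^\alpha,h^\beta]=0$ for all $\alpha,\beta$. In particular, flatness immediately gives $\|\Omega^{\bot}\|^2=0$, so the second term of the Euler-Lagrange expression drops out.

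\textbf{Step 2: $\Hn=0$.} I will treat the two pieces of $\Hn$ separately.
\begin{itemize}
\item The divergence term $\sum\nabla_i\bigl(h^\beta_{ik}\nabla_j[h^\alpha,h^\beta]_{kj}\bigr)$ vanishes. The reason is that $[h^\alpha,h^\beta]$ is zero as a tensor field on the whole domain, so its covariant derivatives are zero as well.
\item The trace term vanishes by commutativity. Using $h^\gamma h^\beta=h^\beta h^\gamma$ we have $h^\beta h^\gamma h^\beta h^\gamma=h^\beta h^\gamma h^\gamma h^\beta$. It follows that $\trace(h^{\alpha}h^{\beta}h^{\gamma}h^{\beta}h^{\gamma})=\trace(h^{\alpha}h^{\beta}h^{\gamma}h^{\gamma}h^{\beta})$ for every $\alpha,\beta,\gamma$.
\end{itemize}
Thus $4\Hn+\|\Omega^{\bot}\|^{2}H=0$, and Theorem~\ref{thm-1} yields that $M$ is Normal-Yang-Mills.

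\textbf{Step 3: hypersurfaces.} For $p=1$ there is only one normal direction, so there is a single shape operator, and it trivially commutes with itself. Equivalently, a metric connection on a line bundle has curvature in $\mathfrak{so}(1)=0$. Either way the normal bundle is flat, and Step 2 applies.

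There is no genuine obstacle here once Theorem~\ref{thm-1} is assumed. The only point requiring care is Step 1: one must confirm that the Ricci equation in the sphere has no ambient-curvature contribution, so that flatness of the normal bundle is exactly the commutation of the shape operators.
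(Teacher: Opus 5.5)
Your proposal is correct and follows the paper's route. By the Ricci equation, $\Omega^{\bot}=0$ forces the shape operators to commute pairwise, so both parts of $\Hn$ vanish; since also $\|\Omega^{\bot}\|^{2}=0$, the Euler--Lagrange expression $4\Hn+\|\Omega^{\bot}\|^{2}H$ from Theorem~\ref{thm-1} vanishes identically, and you simply make explicit the details (the $\|\Omega^{\bot}\|^{2}H$ term and the hypersurface case) that the paper leaves implicit.
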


\begin{thm} \label{thm-2}
Let $M^n$ be a submanifold of $S^{n+p}$. For a normal variation $F$, the first variation of the curvature functional $L$ is
\begin{align*}
    \frac{d}{dt}\bigg|_{t=0}L(F_{t})=\int_{M}\left\langle \frac{\partial F}{\partial t}\bigg|_{t=0}, 4\Ht + \Big(4(n-1) - \|\Omega\|^{2}\Big)H+8\sum_{\alpha,\beta} H^\beta \trace(h^\alpha h^\beta) e_\alpha \right\rangle dvol.
\end{align*}
Consequently, $M$ is a Tangent-Yang-Mills submanifold if and only if it satisfies the Euler-Lagrange equation: 
\begin{equation*}
4\Ht + \Big(4(n-1) - \|\Omega\|^{2}\Big)H+8\sum_{\alpha,\beta} H^\beta \trace(h^\alpha h^\beta) e_\alpha =0.
\end{equation*} 

In particular, if $M$ is a minimal submanifold, this condition reduces to $\Ht=0$.
\end{thm}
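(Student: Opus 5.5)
The plan is to compute the first variation of $L=\int_M\|\Omega\|^2\,dvol$ directly from the Gauss equation, in parallel with the argument for Theorem \ref{thm-1}. For a normal variation with variation field $V=\sum_\alpha V^\alpha e_\alpha$ we use the standard first-order formulas in $S^{n+p}$. Along $F_t$ the induced metric satisfies
\[
\partial_t g_{ij}=-2\sum_\alpha V^\alpha h^\alpha_{ij},
\qquad
\partial_t\, dvol=-\langle V,H\rangle\, dvol .
\]
The second fundamental form varies by
\[
\partial_t h^\alpha_{ij}=\nabla_i\nabla_j V^\alpha-\sum_{\beta} V^\beta\,(h^\alpha h^\beta)_{ij}+\delta_{ij}V^\alpha+(\text{normal-frame rotation terms}).
\]
The rotation terms drop out of every $O(p)$-invariant expression. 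By the Gauss equation,
\[
R_{ijkl}=\delta_{ik}\delta_{jl}-\delta_{il}\delta_{jk}+\sum_\alpha\bigl(h^\alpha_{ik}h^\alpha_{jl}-h^\alpha_{il}h^\alpha_{jk}\bigr),
\]
so $\|\Omega\|^2=\sum R_{ijkl}^2$ is a full contraction involving four inverse metrics. Its derivative therefore splits into three parts: a part from varying $R_{ijkl}$ through $h$ (and through $g$ in the constant-curvature part), a part from the four inverse metrics, and a part from $dvol$.

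First, the $dvol$ term gives $-\|\Omega\|^2\langle V,H\rangle$ immediately. Next, the inverse-metric variations contribute $4\sum V^\alpha h^\alpha_{ij}R_{iklm}R_{jklm}$. Expanding $R$ via Gauss produces the terms $\trace(h^\alpha h^\beta h^\gamma)\trace(h^\beta h^\gamma)$ and $\trace(h^\alpha h^\beta h^\gamma h^\beta h^\gamma)$, together with lower-order pieces in $H$ and $\trace(h^\alpha h^\beta)$. The variation of $R$ through $h$ gives
\[
2\sum R_{ijkl}\,\partial_t R_{ijkl}=8\sum R_{ijkl}\,h^\alpha_{jl}\,\partial_t h^\alpha_{ik},
\]
using the symmetries of $R$. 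Inserting $\partial_t h$ and the $g$-variation of the constant-curvature part yields algebraic terms. Expanded with Gauss again, these produce $\trace(h^\alpha h^\beta h^\beta)$, the quintic trace, the $(n-1)H$ term and the term $H^\beta\trace(h^\alpha h^\beta)$, together with a second-order term
\[
8\sum_{\alpha}\sum_{i,j,k,l} R_{ijkl}\,h^\alpha_{jl}\,\nabla_i\nabla_k V^\alpha .
\]

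The main step is handling this second-order term. We integrate by parts twice, which is allowed since $V$ has compact support, moving both derivatives onto $R_{ijkl}h^\alpha_{jl}$. We then simplify with the Codazzi equation $\nabla_i h^\alpha_{jl}=\nabla_j h^\alpha_{il}$ and the contracted second Bianchi identity
\[
\sum_i \nabla_i R_{ijkl}=\nabla_k\Ric_{jl}-\nabla_l\Ric_{jk}.
\]
The aim is to bring the result to the form $\sum h^\alpha_{ij}\nabla_k(\nabla_j\Ric_{ik}-\nabla_k\Ric_{ij})$. Terms containing $\nabla h\cdot\nabla h\cdot R$ or $h\cdot\nabla\nabla R$ must either cancel by Codazzi symmetry or be converted through Ricci identities. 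Each commutation $[\nabla_i,\nabla_k]$ applied to $h$ produces curvature times $h$, which by Gauss becomes further quartic and quintic trace terms in $h$ and extra $H$-terms. I expect this bookkeeping to be the main obstacle: all the commutator contributions and the algebraic terms from $\partial_t g$ and $\partial_t h$ have to combine exactly into the stated coefficients $-2$, $+1$, $-1$, $4(n-1)$ and $8$.

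To control this, I would first treat the constant-curvature and purely algebraic pieces separately. These are checked by testing on totally geodesic spheres, where $h=0$, and on a product such as Clifford tori, where the normal bundle is flat. Finally, collecting all terms gives $\int\langle V,\cdot\rangle$ with the displayed integrand. Since $V$ is an arbitrary compactly supported normal field, the Euler--Lagrange equation follows. Setting $H=0$ kills the last two terms and leaves $\Ht=0$.
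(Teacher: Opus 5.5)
Your route (vary $g$ and $h$ and feed them into the Gauss equation) is a legitimate alternative to the paper's. As written, though, it is a plan and not a proof.

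\textbf{Where the gap is.} Everything of substance is deferred to ``bookkeeping'': the treatment of the second-order term and the collection of the zeroth-order terms. You propose to calibrate these on totally geodesic spheres and Clifford tori. Checking examples cannot establish a pointwise identity, and these particular examples are blind to the delicate terms.
\begin{itemize}
\item On a totally geodesic sphere $h\equiv 0$, so every term vanishes.
\item A Clifford torus has parallel Ricci tensor, so the $\nabla\nabla\Ric$ term vanishes there.
\item A Clifford torus also has flat normal bundle, so $\trace(h^\alpha h^\beta h^\gamma h^\beta h^\gamma)$ cannot be distinguished from $\trace(h^\alpha h^\beta h^\gamma h^\gamma h^\beta)$.
\end{itemize}
Watch the normalization too. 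The paper's $\|\Omega\|^2=\sum_{i,j}\sum_{p<q}\Omega_{ij}(e_p,e_q)^2$ equals $\tfrac12\sum R_{ijkl}^2$, so with your $\sum R_{ijkl}^2$ every coefficient doubles. You also mix the two: your $8$ in $2\sum R\,\partial_tR$ uses your normalization, while your $4$ for the inverse metrics uses the paper's.

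\textbf{How the paper proceeds.} The paper avoids second derivatives of $V$ altogether. In moving frames the connection varies by the first-order form $\tau_{ij}(e_k)=\sum_\alpha(a_{j\alpha}h^\alpha_{ik}-a_{i\alpha}h^\alpha_{jk})$, and $\partial_t\Omega=-d_\nabla\tau+[a,\Omega]$. One integration by parts produces $\delta_\nabla\Omega$, which the second Bianchi identity turns into $-\nabla_i\Ric_{jk}+\nabla_j\Ric_{ik}$. All zeroth-order terms come from the single frame-rotation term \eqref{algebraic-tangent}.

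\textbf{The missing observation.} The obstacle you anticipate does not arise. After two integrations by parts you face $\nabla_k\nabla_i(R_{ijkl}h^\alpha_{jl})$. Since $R_{ijkl}h^\alpha_{jl}$ is symmetric in $i,k$, the normal curvature hidden in $\nabla^2V$ drops out. Every term containing a derivative of $h^\alpha$ then vanishes identically:
\begin{itemize}
\item By Codazzi, $\nabla_m h^\alpha_{jl}$ is totally symmetric, and $\nabla_k\nabla_m h^\alpha_{jl}$ is symmetric in $m,j,l$.
\item In each such term, two of these symmetric indices are contracted against a pair in which $R_{ijkl}$, $\nabla_kR_{ijkl}$ or $\sum_i\nabla_iR_{ijkl}$ is skew.
\end{itemize}
So no Ricci identities enter and no new quartic or quintic terms appear. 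The contracted Bianchi identity then leaves, in the paper's normalization,
\[
4\int\sum_{\alpha,i,j,k}V^\alpha h^\alpha_{ij}\nabla_k(\nabla_k\Ric_{ij}-\nabla_j\Ric_{ik}).
\]
Your three zeroth-order sources are $4V^\alpha h^\alpha_{ia}R_{ijkl}R_{ajkl}$, $-8V^\alpha h^\alpha_{ik}\Ric_{ik}$ and $4R_{ijkl}h^\alpha_{jl}(\delta_{ik}V^\alpha-\sum_\beta V^\beta(h^\alpha h^\beta)_{ik})$. They add up exactly to the paper's $2\sum V^\alpha h^\alpha_{km}R_{ijkl}R_{ijml}$, whose Gauss expansion gives the coefficients $4(n-1),8,-8,4,-4$.

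\textbf{A sign to be aware of.} The derivative term obtained this way is $-4$ times the first summand of $\Ht$ as defined, so you cannot make it match the displayed sign. This is not a flaw in your method.
\begin{itemize}
\item The paper's \eqref{tau-omega-tangent} ends with $4\int\sum a_\gamma\nabla_i(h^\gamma_{jk}(\nabla_i\Ric_{jk}-\nabla_j\Ric_{ik}))$. By the same Codazzi argument this equals your expression, and only the final identification with $4\Ht$ flips the sign.
\item For surfaces in $S^3$ one has $\Ric=Kg$ and $L=2\int K^2$. The classical variation of scalar curvature gives derivative part $4(H\Delta K-h_{ij}\nabla_i\nabla_jK)$, confirming your sign. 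The algebraic part, $2HK^2$, agrees with the theorem.
\end{itemize}
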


Our definitions of Normal and Tangent Yang-Mills submanifolds naturally follow a well-established tradition of characterizing special submanifolds via the critical points of geometric functionals. The most fundamental example is the volume functional, whose critical points yield minimal submanifolds. Another prominent example is the study of Willmore submanifolds in the unit sphere by Wang \cite{W}, which are defined as the critical points of the Willmore functional $W(M) = \int_{M} (\|h\|^2 - \frac{1}{n}\|H\|^2)^{\frac{n}{2}} dvol$. For minimal submanifolds in a sphere, this variational problem beautifully reduces to a cubic algebraic system $\sum\limits_{\beta} \trace (h^{\alpha} h^{\beta} h^{\beta}) = 0$. In this context, isoparametric focal submanifolds in spheres provide a rich source of examples. Specifically, Tang and Yan \cite{T-Y 1}, as well as Xie \cite{X}, proved that all such focal submanifolds, particularly those of OT-FKM type, are Willmore submanifolds. By examining these OT-FKM type focal submanifolds under our newly established criteria, we obtain richer geometric characterizations. More importantly, as detailed in the following theorems, these non-trivial examples demonstrate that our functionals introduce a distinct class of critical points whose curvatures generally fail to satisfy the classical Yang-Mills equations.

\begin{thm}\label{exmp-3}
Let $M_{+}$ be an OT-FKM type focal submanifold in a unit sphere $S^{2l-1}$.

(1) The normal curvature does not satisfy the classical Yang-Mills equation $\delta_{\nabla^\bot}\Omega^{\bot}=0$, except for cases where $(m_1,m_2)=(m,l-m-1)\in\{(5,2),(6,1)\}$.

(2) For $m=1, 2, 3$, the definite case of $m=4$, and the indefinite case of $(m_{1},m_{2})=(4,3)$,  $M_{+}$ is a Normal-Yang-Mills submanifold. In contrast, in the indefinite case of $m=4$ and $l>8$, $M_{+}$ is not a Normal-Yang-Mills submanifold.
\end{thm}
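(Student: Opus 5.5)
We work with the standard description of the OT-FKM focal submanifold. Let $P_0,\dots,P_m$ be symmetric matrices on $\mathbb{R}^{2l}$ forming a symmetric Clifford system, i.e. $P_iP_j+P_jP_i=2\delta_{ij}I$. Then
\[
M_+=\{x\in S^{2l-1}:\langle P_ax,x\rangle=0,\ a=0,\dots,m\}.
\]
Its normal space at $x$ is spanned by $e_a=P_ax$, $a=0,\dots,m$, so the codimension is $m+1$ and $\dim M_+=2l-m-2$. The shape operator in the direction $P_ax$ is $h^a(X)=-(P_aX)^T$, the tangential projection.

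Several identities for these operators are already known from Tang--Yan and Xie:
\begin{itemize}
\item each $h^a$ has eigenvalues $\pm1$ (multiplicity $m_2=l-m-1$) and $0$ (multiplicity $m$);
\item $M_+$ is minimal and has parallel-like second fundamental form relations;
\item $\sum_\beta h^\beta h^\beta$ is computable;
\item the anticommutators $h^ah^b+h^bh^a$ restricted to suitable subspaces are expressible through the $P$'s.
\end{itemize}
I would first record the tangent decomposition $T_xM_+=E_1\oplus E_{-1}\oplus E_0$ for a fixed normal $P_0x$. Here $E_0=\mathrm{span}\{P_aP_0x\}_{a\ge1}$, which has dimension $m$. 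With this in hand I would compute $h^ah^b$ explicitly in terms of the operators $X\mapsto (P_aP_bX)^T$ together with terms involving the vectors $P_aP_bx$. The quantities needed are the commutators $[h^a,h^b]$, the normal curvature $\Omega^\perp_{ab}(X,Y)=\langle[h^a,h^b]X,Y\rangle$, and the traces $\trace(h^\alpha h^\beta h^\gamma h^\beta h^\gamma)$ and $\trace(h^\alpha h^\beta h^\gamma h^\gamma h^\beta)$.

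For part (1), the classical Yang--Mills equation for $\Omega^\perp$ reads $\sum_i(\nabla_i\Omega^\perp)(e_i,\cdot)=0$. By the Ricci equation this is equivalent to
\[
\sum_{i,k}\nabla_i[h^\alpha,h^\beta]_{ik}=0\quad\text{for all }\alpha,\beta.
\]
I would expand this with the Codazzi equation, using the known covariant derivative of the second fundamental form of $M_+$. That derivative is nonzero, and it is computable because $\nabla h$ is determined by the Clifford structure: it is expressed through $\langle P_aP_bP_cx,\cdot\rangle$-type terms. The result should reduce to an explicit vector that is a combination of $(P_\alpha P_\beta P_\gamma x)^T$ summed over $\gamma$, with coefficients depending on $m$ and $l$. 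Showing that this vector vanishes only in the listed cases requires a case analysis on $m$. The exceptional pairs $(5,2)$ and $(6,1)$ are where $l=m+3$ or $l=m+2$ is small enough that the relevant tangent components die, since $m_2$ is small relative to the Clifford algebra dimension. I would verify the nonvanishing elsewhere by exhibiting a specific $x$ and specific indices $\alpha,\beta$.

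For part (2), since $M_+$ is minimal, Theorem \ref{thm-1} reduces the Normal-Yang-Mills condition to $\Hn=0$. The first term $\sum\nabla_i(h^\beta_{ik}\nabla_j[h^\alpha,h^\beta]_{kj})$ uses the divergence computed in part (1). The cubic-in-commutator trace term I would compute algebraically, reducing everything to traces of products of $P$'s restricted to $M_+$. Such traces equal $\langle P_{a_1}\cdots P_{a_k}x,x\rangle$-type expressions. For $m\le 3$ and the definite case $m=4$ (where $P_0\cdots P_4=\pm I$), these expressions are constant or vanish, and the total should cancel. In the indefinite $m=4$ case, the function $\langle P_0P_1P_2P_3P_4x,x\rangle$ is nonconstant on $M_+$. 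The Normal-Yang-Mills condition becomes a multiple of this function times a vector, with a coefficient that vanishes exactly when $(m_1,m_2)=(4,3)$. For $l>8$ the coefficient is nonzero and there are points where the function is nonzero, so $\Hn\neq0$.

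I expect the main obstacle to be the bookkeeping in $\Hn$, specifically the derivative term, which involves $\nabla^2 h$. To handle it, I would first derive a closed formula for $\nabla_j[h^\alpha,h^\beta]$ via the ambient flat connection, differentiating $(P_\alpha P_\beta X)^T$ directly in $\mathbb{R}^{2l}$. I would then rely on the Clifford relations to collapse the sums.
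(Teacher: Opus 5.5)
Your overall frame matches the paper: minimality reduces the Normal-Yang-Mills condition to $\Hn=0$, and everything is expressed through $\langle P_{a_1}\cdots P_{a_k}x,x\rangle$. At both decisive points, however, the plan either lacks the key idea or predicts a mechanism that does not exist.

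\textbf{Part (1).} You never derive the divergence. ``Exhibiting a specific $x$ and specific $\alpha,\beta$'' cannot be done family by family for infinitely many pairs $(m_1,m_2)$. The paper's argument needs the explicit formula
\[
\sum_j\nabla_j[h^\alpha,h^\beta]_{ij}=-2(n-2m+1)\,g_{\alpha\beta}(e_i)+2\sum_{\gamma\neq\alpha,\beta}\,\sum_{\delta\neq\alpha,\beta,\gamma}g_{\gamma\delta}(e_i)\,\langle P_\delta P_\beta P_\alpha P_\gamma x,x\rangle ,
\]
where $g_{\alpha\beta}(v)=\langle P_\alpha P_\beta x,v\rangle$. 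It also needs the observation that $P_\alpha P_\beta x$ is a unit tangent vector. Evaluating at $e_1=P_\alpha P_\beta x$ gives $-2(2m_2-m_1+1)-2\sum_{\gamma,\delta}\langle P_\beta P_\alpha P_\gamma P_\delta x,x\rangle^2$. This is strictly negative whenever $2m_2-m_1+1>0$, i.e.\ for every admissible pair except $(5,2)$ and $(6,1)$. So the exceptional pairs are not where ``tangent components die''; they are simply where this constant is $\le 0$ and the sign argument says nothing. The paper does not show that the divergence vanishes there, and nothing in your plan supports that stronger claim.

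\textbf{Part (2).} The organizing fact you are missing concerns a single quantity,
\[
\Sigma_\alpha=\sum\langle P_\beta P_\gamma P_\delta P_\varepsilon x,x\rangle\langle P_\alpha P_\beta P_\gamma P_\delta P_\varepsilon x,x\rangle,
\]
summed over pairwise distinct $\alpha,\beta,\gamma,\delta,\varepsilon$. Both the derivative term (by minimality and Codazzi it equals $\sum h^\beta_{ik}\nabla_k\nabla_j[h^\alpha,h^\beta]_{ij}$) and the two quintic traces reduce to multiples of $\Sigma_\alpha$, giving $\langle\Hn,e_\alpha\rangle=-7\Sigma_\alpha$. That is why $m\le 3$ is immediate, not a cancellation that ``should'' happen.

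For $m=4$ this gives $\Sigma_0=24\langle P_0Qx,x\rangle\langle Qx,x\rangle$ with $Q=P_0\cdots P_4$, and the constant is universal. There is no $l$-dependent coefficient vanishing at $(4,3)$, so the mechanism you predict will not appear. Worse, your claim that $\langle Qx,x\rangle$ is nonconstant on $M_+$ in the indefinite case is false for $l=8$. It vanishes identically there, and that is precisely why the indefinite $(4,3)$ case is Normal-Yang-Mills.

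Proving this needs an idea absent from your plan. Write $x=(u,v)$ and $Q=\mathrm{diag}(A,A)$ with $A=E_1E_2E_3$. Embed the $\mathcal{Cl}_3$-module $\mathbb{R}^8$ in a $\mathcal{Cl}_7$-module, so that $u\in\mathrm{span}\{E_4v,\dots,E_7v\}$. Since $A$ anticommutes with $E_4,\dots,E_7$, this gives $\langle u,Au\rangle=-\langle v,Av\rangle$, hence $\langle Qx,x\rangle=0$.

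For $l>8$, knowing $\langle Qx,x\rangle\neq0$ somewhere is not enough. You need a point where the product $\langle Qx,x\rangle\langle P_0Qx,x\rangle=\langle u,Au\rangle^2-\langle v,Av\rangle^2$ is nonzero. The paper finds one by taking $v$ with $\langle v,Av\rangle=0$. It then uses that a totally isotropic subspace of $\langle\cdot,A\cdot\rangle$ has dimension at most $l/2<l-4=\dim V^\perp$, where $V=\mathrm{span}\{E_0v,\dots,E_3v\}$.
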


\begin{thm} \label{exmp-4}
Let $M_{+}$ be an OT-FKM type focal submanifold in a unit sphere $S^{2l-1}$.

(1) The Riemannian curvature does not satisfy the classical Yang-Mills equation $\delta_{\nabla}\Omega=0$, except for cases where $(m_{1},m_{2})\in\{(2,1),(6,1)\}$, or it is
diffeomorphic to $Sp(2)$ in the homogeneous case with $(m_{1},m_{2})=(4,3)$.

(2) For $m=1, 2, 3$, the definite case of $m=4$, and the indefinite case of $(m_{1},m_{2})=(4,3)$,  $M_{+}$ is a Tangent-Yang-Mills submanifold. In contrast, in the indefinite case of $m=4$ and $l>8$, $M_{+}$ is not a Tangent-Yang-Mills submanifold.
\end{thm}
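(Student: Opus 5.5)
Since $M_{+}$ is minimal, Theorem~\ref{thm-2} reduces the Tangent-Yang-Mills condition to $\Ht=0$. The plan is to compute every term of $\Ht$ explicitly from the OT-FKM data. We also treat $\delta_{\nabla}\Omega$ directly, using the Gauss equation
\[
R_{ijkl}=\delta_{ik}\delta_{jl}-\delta_{il}\delta_{jk}+\sum_\alpha (h^\alpha_{ik}h^\alpha_{jl}-h^\alpha_{il}h^\alpha_{jk}).
\]

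Setup. Recall that $M_{+}=\{x\in S^{2l-1}: \langle P_\alpha x,x\rangle=0,\ \alpha=0,\dots,m\}$. Its normal space at $x$ is spanned by $\{P_\alpha x\}$, and its shape operators are $h^\alpha = -(P_\alpha)^{T}$, the tangential part of $P_\alpha$. The tangent space splits as $T_xM_{+}=E_1\oplus E_0$. Here $E_1=\mathrm{span}\{P_\alpha P_\beta x\}_{\alpha<\beta}$ has dimension $m(m+1)/2$ minus the relations. The standard facts (Tang--Yan, Xie) are that $h^\alpha$ maps $E_0\to E_1$-type pieces and that the traces $\trace(h^\alpha h^\beta)=c\,\delta_{\alpha\beta}$ and $\sum_\beta\trace(h^\alpha h^\beta h^\beta)=0$ hold.

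Algebraic terms. We express $\trace(h^\alpha h^\beta h^\gamma)$, $\trace(h^\alpha h^\beta h^\beta)$ and the quintic $\trace(h^\alpha h^\beta h^\gamma h^\beta h^\gamma)$ through Clifford relations, reducing each to expressions in $x$ such as $\langle P_\alpha P_\beta P_\gamma\cdots x,x\rangle$.
\begin{itemize}
\item For $m\le 3$, products of up to five $P$'s collapse: they become either $\pm P_\alpha$, which vanishes on $M_{+}$, or a scalar.
\item For $m=4$, the one new invariant is $\langle P_0P_1P_2P_3P_4x,x\rangle$. It is constant ($0$ or $\pm1$) in the definite case and a genuinely varying function in the indefinite case.
\end{itemize}

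Derivative term. We also need $\sum h^\alpha_{ij}\nabla_k(\nabla_j\Ric_{ik}-\nabla_k\Ric_{ij})$. By Gauss, $\Ric_{ij}=(n-1)\delta_{ij}-\sum_\beta (h^\beta h^\beta)_{ij}$ (using minimality), so this term is a divergence of the Codazzi-type tensor $d^\nabla(\sum_\beta h^\beta h^\beta)$. We will compute it using the Codazzi equation together with explicit covariant derivatives of $h^\alpha$. These come from differentiating $P_\alpha$ along $M_{+}$: $\nabla h^\alpha$ is given by $\langle P_\alpha X, Y\rangle$ projected, plus the normal connection forms $\langle P_\alpha X,P_\beta x\rangle$.

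Conclusions. For (2), assembling the terms should give $\Ht=0$ in the cases $m\le 3$, definite $m=4$, and $(4,3)$. In the indefinite $m=4$, $l>8$ case, we expect $\Ht$ to be a nonzero multiple of the normal projection of something like $P_0\cdots P_4x$. We then pick a point where this is nonzero.

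For (1), we write $\delta_\nabla\Omega$ via the second Bianchi identity as $(\delta R)_{jkl}=\nabla_k\Ric_{jl}-\nabla_l\Ric_{jk}$, so $\delta_\nabla\Omega=0$ iff $\Ric$ is Codazzi. We compute $\nabla_k(\sum_\beta h^\beta h^\beta)_{jl}$ antisymmetrized and find a tensor with coefficients depending on $(m,l)$. This tensor vanishes exactly when:
\begin{itemize}
\item $\sum_\beta h^\beta h^\beta$ is parallel, which happens for $(2,1)$ and $(6,1)$ (where $M_{+}$ is a symmetric/homogeneous space with structure forcing it);
\item or, in the homogeneous $(4,3)$ case, $M_{+}\cong Sp(2)$ is normal homogeneous with parallel Ricci.
\end{itemize}
To exclude the remaining cases we evaluate the tensor on specific frame vectors.

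Main obstacle. The hardest step is the computation of the derivative terms: $\nabla(h^\beta h^\beta)$, its divergence, and identifying its vanishing set. This requires careful bookkeeping of $E_0$/$E_1$ components and of the Clifford relations. We would first attempt it in the adapted frame $\{P_\alpha P_\beta x\}$ together with an orthonormal basis of $E_0$, treating $m=4$ indefinite separately via the invariant $\langle P_0\cdots P_4x,x\rangle$.
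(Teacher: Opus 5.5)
\textbf{Main gap: the $m=4$ indefinite cases.} Your route for (2) is the paper's in outline: minimality reduces Tangent-Yang-Mills to $\Ht=0$, and each term is evaluated with Clifford identities. But the proposal stops before the steps that actually decide the theorem.

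First, the reduction is missing. Let $S_\alpha(x)$ be the sum of $\langle P_\beta P_\gamma P_\delta P_\varepsilon x,x\rangle\langle P_\alpha P_\beta P_\gamma P_\delta P_\varepsilon x,x\rangle$ over pairwise distinct $\beta,\gamma,\delta,\varepsilon\neq\alpha$. The paper shows:
the derivative term of $\Ht$ equals $-9S_\alpha$, computed from $\Ric=(n-m-2)I+G$ and $e_j(g_{\alpha\beta}(e_i))=\langle P_\alpha P_\beta e_j,e_i\rangle-\delta_{ij}\delta_{\alpha\beta}$;
the two cubic terms vanish, by $\sum_\beta\trace(h^\alpha h^\beta h^\beta)=0$ and $\trace(h^\beta h^\gamma)=(n-m+1)\delta_{\beta\gamma}$;
and $\sum_{\beta,\gamma}\trace(h^\alpha h^\beta h^\gamma h^\beta h^\gamma)=-S_\alpha$.
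Hence the $\alpha$-component of $\Ht$ is $-8S_\alpha$. For $m=4$ and $\alpha=0$ this gives $S_0=24\langle Q_0x,x\rangle\langle Qx,x\rangle=24(\langle u,Au\rangle^2-\langle v,Av\rangle^2)$. Here $Q=P_0\cdots P_4=\mathrm{diag}(A,A)$, $Q_0=P_0Q$, $A=E_1E_2E_3$, and $x=(u,v)\in M_+$ means $|u|^2=|v|^2=\tfrac12$ with $u\perp V=\mathrm{span}\{v,E_1v,E_2v,E_3v\}$.

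Second, your claim that $\langle P_0\cdots P_4x,x\rangle$ ``genuinely varies'' in the indefinite case is false on $M_+$ when $l=8$. Extend $E_1,E_2,E_3$ to a $\mathcal{Cl}_7$-module structure on $\mathbb{R}^8$. Then $V^\perp=\mathrm{span}\{E_4v,\dots,E_7v\}$. Since $A$ anticommutes with $E_4,\dots,E_7$, we have $E_\alpha AE_\alpha=A$ and $E_\alpha AE_\beta$ skew for $\alpha\neq\beta$. This yields $\langle u,Au\rangle=-\langle v,Av\rangle$, so $\langle Qx,x\rangle\equiv0$ on $M_+$. That is exactly why the indefinite $(4,3)$ case is Tangent-Yang-Mills. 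Your proposal only asserts this outcome, and your own description of the invariant points the other way.

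Third, for $l>8$, ``pick a point where this is nonzero'' is precisely what must be proved. Take $v$ with $\langle v,Av\rangle=0$, which is possible since $A\neq\pm I_l$. A totally isotropic subspace for $\langle\cdot,A\cdot\rangle$ has dimension at most $l/2<l-4=\dim V^\perp$. So there is $u\in V^\perp$ with $\langle u,Au\rangle\neq0$, giving $S_0(x)\neq0$. Without these two arguments, nothing in your plan separates $l=8$ from $l>8$.

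\textbf{Smaller points and part (1).} Your reason for $m\le3$ is wrong as stated: products of up to five $P$'s do not collapse. For $m=3$, $P_0P_1P_2P_3$ is a new symmetric involution anticommuting with every $P_\alpha$, and $\langle P_0P_1P_2P_3x,x\rangle$ is non-constant on $M_+$. Such four-fold terms do occur in intermediate tensors. The case $m\le3$ is trivial only because, after contraction with $h^\alpha$, everything reduces to $S_\alpha$, which needs five distinct indices. Similarly, in the definite case the obstruction vanishes because $\langle Q_0x,x\rangle=\langle P_0x,x\rangle=0$, not because $\langle Qx,x\rangle$ is constant.

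For (1), the paper does no computation. After the identity $(\delta_\nabla\Omega)_{ij}(e_k)=-\nabla_i\Ric_{jk}+\nabla_j\Ric_{ik}$, it cites Tang--Yan's classification of the focal submanifolds $M_+$ whose Ricci tensor is Codazzi. You claim the antisymmetrized $\nabla\Ric$ vanishes ``exactly when'' $\sum_\beta(h^\beta)^2$ is parallel. But Codazzi is a priori weaker than parallel, and your support for the exceptional cases is only an appeal to homogeneity. The hard direction, that Ricci fails to be Codazzi in every remaining case, is left to unspecified frame evaluations. Either invoke Tang--Yan as the paper does or carry out that computation; otherwise (1) is not established.
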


We will show that it is equivalent for OT-FKM type focal submanifolds $M_+$ to be Normal-Yang-Mills and Tangent-Yang-Mills determined by the same criterion (\ref{sum distinct normal indices}). For $m>4$, it seems likely that   (\ref{sum distinct normal indices}) does not hold and thus $M_+$ is neither Normal-Yang-Mills nor Tangent-Yang-Mills. Notice that Theorems \ref{exmp-3} and \ref{exmp-4} provide infinitely many examples and counterexamples for Normal-Yang-Mills and Tangent-Yang-Mills minimal submanifolds in unit spheres.

The paper is organized as follows. In Section 2, we introduce the necessary notations and derive the first variations of the connections and curvatures. Section 3 is devoted to the proofs of Theorem \ref{thm-1} and Theorem \ref{thm-2}. In Section 4, we prove Theorems \ref{exmp-3} and \ref{exmp-4}, providing non-trivial examples of Normal-Yang-Mills and Tangent-Yang-Mills submanifolds whose respective curvature tensors do not satisfy the classical Yang-Mills equations, as well as counterexamples that fail to satisfy our new criteria.

\section{Notation and Local Formulas}
In this section, we introduce the necessary notation and recall fundamental equations to facilitate the study of the geometric properties of Normal-Yang-Mills and Tangent-Yang-Mills submanifolds.

Throughout this paper, we adopt the following index conventions:
\begin{align*}
    1\leq A, B, C \leq n+p, \quad 1\leq i, j, k, l \leq n, \quad n+1\leq \alpha, \beta, \gamma, \delta \leq n+p.
\end{align*}

Let $M$ be an $n$-dimensional smooth manifold, and let $F_0: M \to S^{n+p}$ be a smooth immersion. We consider a smooth variation of $F_0$, denoted by $F: M \times I \to S^{n+p}$, such that $F(\cdot, 0) = F_0$. For convenience, we denote $F_t = F(\cdot, t)$. Locally on an open neighborhood $U\times I$, we first choose a local orthonormal frame $\{e_{1},\dots,e_{n+p}\}$ on the pullback bundle $F^* TS^{n+p}$, such that at each point $(x,t)\in U\times I$, the basis $e_{1},\dots,e_{n}$ are tangent to $F_{t}(M)$ at point $F_{t}(x)$ and $e_{n+1},\dots,e_{n+p}$ are normal. Using the frame, we define local 1-forms on $U\times I$ by the following relations:
\begin{align*}
dF&=\sum_{A}\omega_{A}e_{A},\\
de_{A}&=\sum_{B}\omega_{AB}e_{B}-\omega_{A}F,
\end{align*}
or equivalently, by
\[\omega_{A}=\langle dF,e_{A}\rangle,\quad
\omega_{AB}=\langle de_{A},e_{B}\rangle.\]
Then differentiating the formulae yields the first and second structure equations:
\begin{align}
\label{first structure equation}
d\omega_{A} &= \sum_{B} \omega_{B} \wedge \omega_{BA},\\
\label{second structure equation}
d\omega_{AB} &= \sum_{C} \omega_{AC} \wedge \omega_{CB} - \omega_{A} \wedge \omega_{B}.
\end{align}

On $M \times I$, we decompose these 1-forms into two components:
\begin{equation}\label{decomp. of omega}
    \omega_A = \theta_A + a_A dt, \quad \omega_{AB} = \theta_{AB} + a_{AB} dt,
\end{equation}
where $\theta_A, \theta_{AB}$ are differential forms in $M$ with coefficients which depend on $t$. It is clear that all of $\omega_{AB},\ \theta_{AB}$, and $a_{AB}$ are anti-symmetric with respect to their indices. To see the geometric meaning of $a_{A}$, let $\omega_A$ act on the vector field generated by $t$ and we obtain
\[a_A=\omega_A\left(\frac{\partial}{\partial t}\right)=\left\langle dF\left(\frac{\partial}{\partial t}\right),e_A\right\rangle=\left\langle\frac{\partial F}{\partial t},e_A\right\rangle.\]
Thus for a normal variation $F$, the variation field $\frac{\partial F}{\partial t}\big|_{t=0}=\sum\limits_\alpha a_{\alpha}e_{\alpha}$.
Similarly, $a_{ij}=\left\langle\frac{\partial}{\partial t}e_{i},e_j\right\rangle$ and $a_{\alpha\beta}=\left\langle\frac{\partial}{\partial t}e_{\alpha},e_\beta\right\rangle$ represent the rotations of frames of the tangent bundle and the normal bundle respectively. However, as we will see in Lemma \ref{from structural equations}, the components $a_{i\alpha}$ are tensorial with respect to the indices and are determined by the variation $F$.

For each $t$, we consider the embedding $i_{t}:M\to M\times I,\ x\mapsto (x,t)$. Then the pullback $i_{t}^{*}\omega_{A}=i_{t}^{*}\theta_{A}$ and $i_{t}^{*}\omega_{AB}=i_{t}^{*}\theta_{AB}$ defines a family of 1-forms with the parameter $t$. Without risk of confusion, the differential forms on $M$ are still denoted by $\theta_{A}$ and $\theta_{AB}$. Moreover, since $F\circ i_{t}=F_{t}$, the definitions of $\omega_{A}$ and $\omega_{AB}$ yield
\begin{align*}
\theta_{A}=i_{t}^{*}\omega_{A}=\langle dF_{t},e_{A}\rangle,\qquad \theta_{AB}=i_{t}^{*}\omega_{AB}=\langle \nabla^{S}e_{A},e_{B}\rangle,
\end{align*}
which are precisely the pullback forms from $S^{n+p}$, where $\nabla^{S}$ is the Levi-Civita connection on $S^{n+p}$. Consequently, $\theta_{i}$ forms the dual basis of $e_{i}$, and $\theta_{\alpha}$ vanishes identically. $\theta_{ij}$ and $\theta_{\alpha\beta}$ are connection forms on the tangent bundle $TM$ and the normal bundle $NM$ respectively; $\theta_{i\alpha}=\sum\limits_{j}h^{\alpha}_{ij}\theta_{j}$ is the second fundamental form. Denote $\Omega_{ij}$ and $\Omega^{\bot}_{\alpha\beta}$ the Riemann curvature and the normal curvature forms, then the following structure equations and curvature equations are satisfied:
\begin{align}
&d_{M}\theta_{i}=\sum_{j}\theta_{j}\wedge\theta_{ji}, \nonumber \\[-0.5ex]
&d_{M}\theta_{ij}=\sum_{k}\theta_{ik}\wedge\theta_{kj}-\Omega_{ij}, \nonumber \\[-0.5ex]
&d_M\theta_{\alpha\beta}=\sum_{\gamma}\theta_{\alpha\gamma}\wedge\theta_{\gamma\beta}-\Omega^{\bot}_{\alpha\beta}, \nonumber \\[-0.5ex]
&\hspace{-1.2em}\left\{
\begin{alignedat}{2}\label{Gauss-Codazzi-Ricci}
&\mathrm{(Gauss)}\ &&\Omega_{ij}=\theta_{i}\wedge\theta_{j}+\sum_{\alpha}\theta_{i\alpha}\wedge\theta_{j\alpha},\\
&\mathrm{(Codazzi)}\ &&d_{M}\theta_{i\alpha}=\sum_{j}\theta_{ij}\wedge\theta_{j\alpha}+\sum_{\beta}\theta_{i\beta}\wedge\theta_{\beta\alpha},\\[-0.5ex]
&\mathrm{(Ricci)}\ &&\Omega^{\bot}_{\alpha\beta}=\sum_{i}\theta_{i\alpha}\wedge\theta_{i\beta}.
\end{alignedat}
\right.
\end{align}

\begin{lem}\label{from structural equations}
For a normal variation $F:M\times I\to S^{n+p}$, we have the following

(1) $\frac{\partial}{\partial t}\big|_{t=0}\theta_i=\sum\limits_j a_{ij}\theta_{j}-\sum\limits_\alpha a_{\alpha}\theta_{i\alpha}$;

(2) the covariant derivative $\nabla^{\bot} a_{\alpha}=\sum\limits_i a_{i\alpha}\theta_{i}$ on the initial submanifold $M$;

(3) the following frame-independent 1-forms $\tau=(\tau_{ij})\in\Lambda^1(\mathfrak{so}(TM))$ and $\tau^{\bot}=(\tau^{\bot}_{\alpha\beta})\in\Lambda^1(\mathfrak{so}(NM))$ measure the variation of connections caused by the variation $F$:
\begin{equation*}
\begin{aligned}
\tau_{ij} &=\left(\frac{\partial}{\partial t}\theta_{ij}-d_{\nabla}a_{ij}\right)\bigg|_{t=0}= \sum_{\alpha}(a_{i\alpha}\theta_{\alpha j}-\theta_{i\alpha}a_{\alpha j}),\\
\tau^{\bot}_{\alpha\beta} &=\left(\frac{\partial}{\partial t}\theta_{\alpha\beta}-d_{\nabla^{\bot}}a_{\alpha\beta}\right)\bigg|_{t=0}= \sum_{i}(a_{\alpha i}\theta_{i\beta}-\theta_{\alpha i}a_{i\beta}).
\end{aligned}
\end{equation*}

\end{lem}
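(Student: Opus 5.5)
The plan is to pull back the structure equations (\ref{first structure equation}) and (\ref{second structure equation}) from $M\times I$ and compare the components containing $dt$. Write $d=d_M+dt\wedge\frac{\partial}{\partial t}$ on $M\times I$ and insert the decomposition (\ref{decomp. of omega}). In each identity we then read off the part of the form $dt\wedge(\cdot)$ and restrict to $t=0$. There the variation is normal, so $a_i=0$; recall also that $\theta_\alpha=0$ for every $t$.

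For (1), take the first structure equation with $A=i$. On the left, the $dt$-part of $d\omega_i$ is $dt\wedge\big(\frac{\partial}{\partial t}\theta_i-d_Ma_i\big)$. On the right, the $dt$-part of $\sum_B\omega_B\wedge\omega_{Bi}$ is
\[
\sum_B\big(a_B\,dt\wedge\theta_{Bi}-a_{Bi}\,dt\wedge\theta_B\big).
\]
Now set $a_i=0$ and $\theta_\alpha=0$, and use antisymmetry. This leaves
\[
\frac{\partial}{\partial t}\theta_i=\sum_\alpha a_\alpha\theta_{\alpha i}-\sum_j a_{ji}\theta_j=\sum_j a_{ij}\theta_j-\sum_\alpha a_\alpha\theta_{i\alpha},
\]
which is (1).

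For (2), take the first structure equation with $A=\alpha$. Since $\theta_\alpha\equiv0$, the $dt$-part gives
\[
-d_Ma_\alpha=\sum_B\big(a_B\theta_{B\alpha}-a_{B\alpha}\theta_B\big).
\]
At $t=0$ this becomes $d_Ma_\alpha=-\sum_\beta a_\beta\theta_{\beta\alpha}+\sum_i a_{i\alpha}\theta_i$, which says exactly that $\nabla^\bot a_\alpha=d_Ma_\alpha+\sum_\beta a_\beta\theta_{\beta\alpha}=\sum_i a_{i\alpha}\theta_i$. As a byproduct, $a_{i\alpha}$ is the covariant derivative of the variation field, so it is tensorial and determined by $F$, as claimed before the lemma. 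One subtlety: the $dt$-part of the equation with $A=i$ also contains $d_Ma_i$. At $t=0$ this term vanishes, since $a_i\equiv0$ on the slice and $d_M$ differentiates only along $M$.

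For (3), take the second structure equation with $A=i$, $B=j$. The $dt$-part gives
\[
\frac{\partial}{\partial t}\theta_{ij}-d_Ma_{ij}=\sum_C\big(a_{iC}\theta_{Cj}-\theta_{iC}a_{Cj}\big)-\big(a_i\theta_j-\theta_ia_j\big).
\]
The last bracket vanishes at $t=0$. Splitting $C$ into tangent and normal indices, the tangent terms $\sum_k(a_{ik}\theta_{kj}-\theta_{ik}a_{kj})$ combine with $d_Ma_{ij}$ into $d_\nabla a_{ij}$, the covariant exterior derivative of the $\mathfrak{so}(TM)$-valued function $(a_{ij})$. The normal terms are exactly the stated formula for $\tau_{ij}$. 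The same computation with $A=\alpha$, $B=\beta$ gives $\tau^\bot_{\alpha\beta}$. In that case the term $\omega_\alpha\wedge\omega_\beta$ has no contribution at all, because $\theta_\alpha=0$ and $a_\alpha\,dt\wedge a_\beta\,dt=0$.

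Frame independence comes from the right-hand sides. Each is built from $a_{i\alpha}$ (tensorial by (2)) and the second fundamental form $\theta_{i\alpha}$, so under a change of frame depending on $(x,t)$ both sides transform as tensors. The main obstacle I expect is sign and ordering bookkeeping in the $dt\wedge$ extraction, specifically the convention $d(a\,dt)=d_Ma\wedge dt$ and the antisymmetry swaps. I would check each identity by contracting with $\partial_t$ rather than extracting $dt\wedge$ terms by hand.
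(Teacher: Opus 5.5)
Your proposal is correct and takes essentially the same route as the paper. In both, you substitute the decomposition \eqref{decomp. of omega} into the two structure equations with $d=d_M+dt\wedge\frac{\partial}{\partial t}$, equate the $dt$-components, use $a_i|_{t=0}=0$ and $\theta_\alpha\equiv 0$, and absorb the tangent (resp.\ normal) connection terms into $d_\nabla a_{ij}$ (resp.\ $d_{\nabla^\bot}a_{\alpha\beta}$). Your remark that $d_Ma_i$ vanishes at $t=0$ because $a_i\equiv 0$ on the whole slice makes explicit a point the paper leaves implicit.
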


\begin{proof}
(1) Substitute \eqref{decomp. of omega} into the first structure equation \eqref{first structure equation}, and use the decomposition of the differentiation $d=d_{M}+dt\wedge \frac{\partial}{\partial t}$, we have
\[\left(d_{M}+dt\wedge \frac{\partial}{\partial t}\right)(\theta_{A}+a_{A}dt)=\sum_B (\theta_B+a_{B}dt)\wedge(\theta_{BA}+a_{BA}dt).\]
Expand both sides and we derive
\begin{align*}
LHS&=d_{M}\theta_{A}+dt\wedge\left(\frac{\partial}{\partial t}\theta_{A}-d_{M}a_{A}\right),\\
RHS&=\sum_{B}\theta_{B}\wedge\theta_{BA}+dt\wedge\sum_{B}(a_{B}\theta_{BA}-\theta_{B}a_{BA}).
\end{align*}
It is easily seen that equating the component independent of $dt$ recovers the first structural equation on $M_{t}$ and the symmetry of its second fundamental form, i.e., $\sum\limits_{i}\theta_{i}\wedge\theta_{i\alpha}=0$. Meanwhile, comparing the components involving $dt$ on both sides yields
\begin{equation}\label{D_t theta_A}
\frac{\partial}{\partial t}\theta_{A}=d_{M}a_{A}+\sum_{B}(a_{B}\theta_{BA}-\theta_{B}a_{BA}).
\end{equation}
For tangent indices, since $\theta_{\alpha}=0$, we have
\begin{align*}
\frac{\partial}{\partial t}\theta_{i}
=d_{M}a_{i}+\sum_{j}(a_{j}\theta_{ji}-\theta_{j}a_{ji})+\sum_{\alpha}a_{\alpha}\theta_{\alpha i}.
\end{align*}
Our assumption of the normal variation implies $a_{i}=0$ when $t=0$, thus
\[\frac{\partial}{\partial t}\bigg|_{t=0}\theta_{i}
=\sum_{j}a_{ij}\theta_{j}-\sum_{\alpha}a_{\alpha}\theta_{i\alpha}.\]

(2) For normal indices,   \eqref{D_t theta_A} yields
\begin{align*}
0
&=d_{M}a_{\alpha}+\sum_{i}(a_{i}\theta_{i\alpha}-\theta_{i}a_{i\alpha})+\sum_{\beta}a_{\beta}\theta_{\beta\alpha}\\
&=\nabla^{\bot} a_{\alpha}+\sum_{i}(a_{i}\theta_{i\alpha}-\theta_{i}a_{i\alpha}),
\end{align*}
where $\nabla^{\bot} a_{\alpha}=d_{M}a_{\alpha}+\sum\limits_{\beta}a_{\beta}\theta_{\beta\alpha}$ is the covariant differential of the normal part of the variational field $\sum\limits_{\alpha} a_{\alpha}e_{\alpha}$. Thus,
\[a_{i\alpha}=\nabla_{e_{i}}^{\bot}a_{\alpha}+\sum_{j}a_{j}h_{ji}^{\alpha}\]
is a tensor determined by the variation of $M$. Specifically, $a_{i\alpha}=\nabla_{e_{i}}a_{\alpha}$ when $t=0$.

(3) Substituting the   \eqref{decomp. of omega} into the second structure equation \eqref{second structure equation} yields
\begin{align*}
d\omega_{AB}&=\sum_C (\theta_{AC}+a_{AC}dt)\wedge(\theta_{CB}+a_{CB}dt)-(\theta_{A}+a_{A}dt)\wedge(\theta_{B}+a_{B}dt)\\
&=\sum_C \theta_{AC}\wedge\theta_{CB}-\theta_{A}\wedge\theta_{B}+dt\wedge\left(\sum_{C}(a_{AC}\theta_{CB}-\theta_{AC}a_{CB})-(a_{A}\theta_{B}-\theta_{A}a_{B})\right),
\end{align*}
while the LHS
\begin{align*}
d\omega_{AB}=\left(d_{M}+dt\wedge \frac{\partial}{\partial t}\right)(\theta_{AB}+a_{AB}dt)=d_{M}\theta_{AB}+dt\wedge\left(\frac{\partial}{\partial t}\theta_{AB}-d_{M}a_{AB}\right).
\end{align*}

Therefore, the $dt$-independent part of the equation coincides with Gauss--Codazzi--Ricci equations \eqref{Gauss-Codazzi-Ricci} on $M$, while the terms involving $dt$ leads to
\begin{equation*}
\frac{\partial}{\partial t}\theta_{AB}=d_{M}a_{AB}+\sum_{C}(a_{AC}\theta_{CB}-\theta_{AC}a_{CB})-(a_{A}\theta_{B}-\theta_{A}a_{B}).
\end{equation*}

Consider the tangent indices, that is,
\[\frac{\partial}{\partial t}\theta_{ij}=d_{M}a_{ij}+\sum_{k}(a_{ik}\theta_{kj}-\theta_{ik}a_{kj})+\sum_{\alpha}(a_{i\alpha}\theta_{\alpha j}-\theta_{i\alpha}a_{\alpha j})-(a_{i}\theta_{j}-\theta_{i}a_{j}).\]

A variation of submanifold $F:M\times I\to S^{n+p}$ naturally induces a variation of the Levi-Civita connection on $M$, which is determined by an 1-form valued in the bundle $\mathfrak{so}(TM)$. Here, for a vector bundle $E$ with metric, $\mathfrak{so}(E)$ is a subbundle of End($E$) with all anti-symmetric elements. The time derivatives of the connection forms $\frac{\partial}{\partial t}\theta_{ij}$ are the most obvious choice, however, it relies on the choice of orthonormal frames. As we have stated before, $a_{ij}$ exactly represents the rotation of the frame $\{e_{i}\}$ on the tangent bundle $TM$. To eliminate this effect, we define the 1-form $\tau=(\tau_{ij})\in\Lambda^1(\mathfrak{so}(TM))$  by
\begin{equation*}\label{def of tau_ij}
\tau_{ij}=\left(\frac{\partial}{\partial t}\theta_{ij}-d_{\nabla}a_{ij}\right)\bigg|_{t=0}=\sum_{\alpha}(a_{i\alpha}\theta_{\alpha j}-\theta_{i\alpha}a_{\alpha j}),
\end{equation*}
which is independent of the frame since each term on the right-hand side is tensorial. Here, the operator $d_{\nabla}$ denotes the exterior covariant differential which satisfies that for any $\xi\in\Lambda^{q}(\mathfrak{so}(TM))$, $d_{\nabla}\xi=d_{M}\xi-[\nabla,\xi]$, where $\nabla=\theta=(\theta_{ij})$ is the connection form on $TM$. More specifically, for $a=(a_{ij})\in\Lambda^{0}(\mathfrak{so}(TM|_{U}))$,
\[d_{\nabla}a_{ij}=d_{M}a_{ij}-[\theta,a]_{ij}=d_{M}a_{ij}-\sum_{k}(\theta_{ik}a_{kj}-a_{ik}\theta_{kj}).\]

Analogously, we can define the frame-independent 1-form $\tau^{\bot}=(\tau^{\bot}_{\alpha\beta})\in\Lambda^1(\mathfrak{so}(NM))$ by
\begin{equation*}\label{def of tau_alpha beta}
\tau^{\bot}_{\alpha\beta}=\left(\frac{\partial}{\partial t}\theta_{\alpha\beta}-d_{\nabla^{\bot}}a_{\alpha\beta}\right)\bigg|_{t=0}=\sum_{i}(a_{\alpha i}\theta_{i\beta}-\theta_{\alpha i}a_{i\beta}),
\end{equation*}
where the operator $d_{\nabla^{\bot}}a^{\bot}=d_{M}a^{\bot}-[\theta^{\bot},a^{\bot}]$ denotes the exterior covariant differential with respect to the connection form $\nabla^{\bot}=\theta^{\bot}=(\theta_{\alpha\beta})$ on the normal bundle $NM$ and $a^{\bot}=(a_{\alpha\beta})\in\Lambda^{0}(\mathfrak{so}(NM|_{U}))$. 
\end{proof}

\begin{lem}\label{var of curvature forms}
Assume that $F: M \times I \to S^{n+p}$ is a normal variation. Then the variation of the curvature forms satisfy:
\begin{equation*}
\frac{\partial}{\partial t}\bigg|_{t=0}\Omega_{ij}=-d_{\nabla}\tau_{ij}+[a,\Omega]_{ij},\qquad \frac{\partial}{\partial t}\bigg|_{t=0}\Omega^{\bot}_{\alpha\beta}=-d_{\nabla^{\bot}}\tau^{\bot}_{\alpha\beta}+[a^{\bot},\Omega^{\bot}]_{\alpha\beta}.
\end{equation*}
\end{lem}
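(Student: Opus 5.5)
The plan is to differentiate the curvature structure equations in $t$ and substitute the decomposition of $\frac{\partial}{\partial t}\theta_{ij}$ from Lemma \ref{from structural equations}(3). I treat the tangent case in detail; the normal case is word-for-word the same, with $\theta^{\bot},\tau^{\bot},a^{\bot}$ in place of $\theta,\tau,a$.

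\textbf{Setup.} On $M$ (with parameter $t$), the structure equation $d_M\theta_{ij}=\sum_k\theta_{ik}\wedge\theta_{kj}-\Omega_{ij}$ reads, in matrix notation,
\[\Omega=\theta\wedge\theta-d_M\theta .\]
Since $d_M$ involves only the $M$-directions, it commutes with $\frac{\partial}{\partial t}$. Writing $\dot\theta=\frac{\partial}{\partial t}\theta$ at $t=0$, we get
\[\dot\Omega=\dot\theta\wedge\theta+\theta\wedge\dot\theta-d_M\dot\theta .\]
By Lemma \ref{from structural equations}(3), $\dot\theta=\tau+d_\nabla a$ with $d_\nabla a=d_Ma-\theta a+a\theta$. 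The right-hand side is linear in $\dot\theta$, so it splits into a $\tau$-part and a $d_\nabla a$-part.

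\textbf{The $\tau$-part.} For an $\mathfrak{so}(TM)$-valued $1$-form $\xi$, the extension of $d_\nabla$ consistent with $d_\nabla\xi=d_M\xi-[\nabla,\xi]$ (graded bracket) is
\[d_\nabla\xi=d_M\xi-\theta\wedge\xi-\xi\wedge\theta .\]
Hence the $\tau$-part is
\[\tau\wedge\theta+\theta\wedge\tau-d_M\tau=-d_\nabla\tau .\]

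\textbf{The $d_\nabla a$-part.} This is the main computation: I must check that $-d_\nabla(d_\nabla a)=[a,\Omega]$ with exactly this sign. Expanding directly:
\[-d_M(d_Ma-\theta a+a\theta)=d_M\theta\, a-\theta\wedge d_Ma-d_Ma\wedge\theta-a\, d_M\theta ,\]
\[(d_\nabla a)\wedge\theta+\theta\wedge(d_\nabla a)=d_Ma\wedge\theta+\theta\wedge d_Ma-\theta a\wedge\theta+a\,\theta\wedge\theta-\theta\wedge\theta\, a+\theta\wedge a\theta .\]
In the sum of these two lines:
\begin{itemize}
\item the $d_Ma$ terms cancel;
\item the mixed terms $-\theta a\wedge\theta$ and $\theta\wedge a\theta$ cancel, because $a$ is a $0$-form;
\item what remains is $(d_M\theta-\theta\wedge\theta)a-a(d_M\theta-\theta\wedge\theta)=-\Omega a+a\Omega=[a,\Omega]$.
\end{itemize}

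\textbf{Conclusion.} Adding the two parts gives $\frac{\partial}{\partial t}\big|_{t=0}\Omega_{ij}=-d_\nabla\tau_{ij}+[a,\Omega]_{ij}$. The normal-bundle identity follows identically from $d_M\theta_{\alpha\beta}=\sum_\gamma\theta_{\alpha\gamma}\wedge\theta_{\gamma\beta}-\Omega^{\bot}_{\alpha\beta}$ and the formula for $\tau^\bot$ in Lemma \ref{from structural equations}(3).

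The only real hazard is sign bookkeeping: using the convention $[\nabla,\xi]=\theta\wedge\xi+\xi\wedge\theta$ on $1$-forms, and correctly identifying $\theta\wedge\theta-d_M\theta=\Omega$. The curvature term in the result is just the Ricci identity $d_\nabla^2a=[\Omega,a]$, written with this paper's sign for $\Omega$.
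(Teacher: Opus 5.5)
Your proposal is correct and takes essentially the same route as the paper: differentiate $\Omega=\theta\wedge\theta-d_M\theta$ to get $-d_\nabla\dot\theta$, then substitute $\dot\theta=\tau+d_\nabla a$ from Lemma~\ref{from structural equations}(3). The only difference is that you verify $-d_\nabla(d_\nabla a)=[a,\Omega]$ by explicit expansion, whereas the paper cites $d_\nabla^2 a=[\Omega,a]$; your sign bookkeeping checks out.
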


\begin{proof}
For simplicity, we omit the subscripts. Notice that the square of the exterior covariant differential leads to curvature, that is, for any $q$-form $\xi$ valued in $\mathfrak{so}(TM)$, $d_{\nabla}^{2}\xi=[\Omega,\xi]$. By Leibniz's rule, we obtain
\begin{align*}
\frac{\partial}{\partial t}\Omega&=\frac{\partial}{\partial t}(-d\theta+\theta\wedge\theta)\\
&=-d\left(\frac{\partial\theta}{\partial t}\right)+\frac{\partial\theta}{\partial t}\wedge\theta+\theta\wedge\frac{\partial\theta}{\partial t}\\
&=-d\left(\frac{\partial\theta}{\partial t}\right)+\left[\theta,\frac{\partial\theta}{\partial t}\right]\\
&=-d_{\nabla}\left(\frac{\partial\theta}{\partial t}\right).
\end{align*}
Consequently, from the definition of $\tau$,
\[\frac{\partial}{\partial t}\bigg|_{t=0}\Omega=-d_\nabla\left(\frac{\partial\theta}{\partial t}\bigg|_{t=0}\right)=-d_{\nabla}\tau-[\Omega,a].\]

Similarly, we can derive that $\frac{\partial}{\partial t}\big|_{t=0}\Omega^{\bot}_{\alpha\beta}=-d_{\nabla^{\bot}}\tau^{\bot}_{\alpha\beta}+[a^{\bot},\Omega^{\bot}]_{\alpha\beta}$.
\end{proof}

\section{Proof of Main Theorems}

In this section, we explore the correspondence between the Normal-Yang-Mills and Tangent-Yang-Mills submanifolds and classical Yang-Mills theory. And we derive the Euler-Lagrange equations for the curvature functionals and provide the proofs for our main results.  

\subsection{Classical Yang-Mills equations}

At any point $q \in M$, we can choose a local orthonormal frame $\{e_A\}$ such that the connection forms satisfy $\theta_{ij}(q)=0$ and $\theta_{\alpha\beta}(q)=0$.

Let $E$ be a vector bundle over $M$ equipped with a connection $\nabla$, and let $D$ be the natural tensor product connection on the tensor bundle $\Lambda^{q}T^{*}M\otimes E=\Lambda^{q}(E)$. For any $E$-valued $q$-form $\varphi\in\Lambda^{q}(E)$, the co-differential $\delta_{\nabla}$ are defined by
\begin{equation}\label{co-differential operator}
    (\delta_{\nabla}\varphi)_{X_{1},\dots,X_{q-1}}=-\sum\limits_{j=1}^{n}(D_{e_{j}}\varphi)_{e_{j},X_{1},\dots,X_{q-1}}.
\end{equation}
If $E$ is equipped with a metric, we choose a local orthonormal frame $\{s_{a}\}_{a=1}^{\mathrm{rank}E}$, then the inner product on $\mathfrak{so}(E)$ for any local sections $\varphi,\psi$ is
\[\langle\varphi,\psi\rangle=\sum_{a,b}\varphi_{ab}\psi_{ab}.\]
For $\varphi,\psi\in\Lambda^q(\mathfrak{so}(E))$, the inner product is defined by
\begin{align*}\label{product}
    \langle \varphi,\psi \rangle&=\sum\limits_{i_{1}<\cdots<i_{q}}\langle \varphi(e_{i_{1}},\dots, e_{i_{q}}), \psi(e_{i_{1}},\dots, e_{i_{q}}) \rangle_{\mathfrak{so}(E)}\\
    &=\sum_{a,b}\,\sum\limits_{i_{1}<\cdots<i_{q}} \varphi_{ab}(e_{i_{1}},\dots, e_{i_{q}})\psi_{ab}(e_{i_{1}},\dots, e_{i_{q}}) .
\end{align*} 

Based on these notation, the classical Yang-Mills equations can be characterized as follows.

\begin{lem}
Let $M^n$ be a submanifold of $S^{n+p}$. The classical Yang-Mills equations for the normal bundle and the tangent bundle are given below, respectively.
    \item For the normal curvature $\Omega^{\bot}$:
    \begin{equation}\label{Yang-Mills equation for normal}
        (\delta_{\nabla^{\bot}}\Omega^{\bot})_{\alpha\beta}(e_{k})  =\sum_{l} \nabla_{e_l}[h^{\alpha}, h^{\beta}]_{kl}.
    \end{equation}
    \item For the Riemannian curvature $\Omega$:
    \begin{equation}\label{Yang-Mills equation for tangent}
        (\delta_{\nabla}\Omega)_{ij}(e_{k})  = -\nabla_{i}\Ric_{jk} + \nabla_{j}\Ric_{ik}.
    \end{equation}
\end{lem}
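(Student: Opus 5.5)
The plan is to compute $\delta_{\nabla^\bot}\Omega^\bot$ and $\delta_\nabla\Omega$ straight from the definition \eqref{co-differential operator}. We work at a point $q$ with a frame satisfying $\theta_{ij}(q)=0$ and $\theta_{\alpha\beta}(q)=0$. At $q$ the covariant derivative $D_{e_j}$ of a bundle-valued 2-form then reduces to differentiating its frame components. The convention is $\varphi(e_j,e_k)=\varphi_{jk}$ when $\varphi=\sum_{j<k}\varphi_{jk}\theta_j\wedge\theta_k$, so that $(\delta\varphi)(e_k)=-\sum_j \nabla_j\varphi_{jk}$. The inputs are the Ricci and Gauss equations in \eqref{Gauss-Codazzi-Ricci}, which turn the curvature forms into algebraic expressions in $h$. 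After that come the Codazzi equation and the second Bianchi identity.

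\textbf{Normal case.} By the Ricci equation, $\Omega^\bot_{\alpha\beta}=\sum_i\theta_{i\alpha}\wedge\theta_{i\beta}=\sum_{i,j,l}h^\alpha_{ij}h^\beta_{il}\theta_j\wedge\theta_l$. Evaluating on $(e_l,e_k)$ gives
\[
\Omega^\bot_{\alpha\beta}(e_l,e_k)=\sum_i(h^\alpha_{il}h^\beta_{ik}-h^\alpha_{ik}h^\beta_{il})=(h^\alpha h^\beta-h^\beta h^\alpha)_{lk}=[h^\alpha,h^\beta]_{lk},
\]
using the symmetry of the $h^\alpha$. Then
\[
(\delta_{\nabla^\bot}\Omega^\bot)_{\alpha\beta}(e_k)=-\sum_l\nabla_{e_l}[h^\alpha,h^\beta]_{lk}=\sum_l\nabla_{e_l}[h^\alpha,h^\beta]_{kl},
\]
because the commutator of two symmetric matrices is antisymmetric. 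Here $\nabla_{e_l}$ is the full covariant derivative of the $\mathfrak{so}(NM)\otimes T^*M\otimes T^*M$-valued tensor. Since the connection forms vanish at $q$, this equals $e_l$ applied to the components there, which matches \eqref{Yang-Mills equation for normal}. The only thing to watch is the sign and index-order convention between $\Omega_{\alpha\beta}(e_l,e_k)$ and the form coefficients.

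\textbf{Tangent case.} I would avoid expanding the Gauss equation and use the second Bianchi identity instead. As a $(0,4)$-tensor write $R_{ijkl}=\Omega_{ij}(e_k,e_l)$. Then
\[
(\delta_\nabla\Omega)_{ij}(e_k)=-\sum_l\nabla_l R_{ijlk}=\sum_l\nabla_l R_{ijkl}.
\]
The differential Bianchi identity $d_\nabla\Omega=0$ gives $\nabla_lR_{ijkl}+\nabla_kR_{ijlm}+\nabla_mR_{ijkl}=0$ in cyclic form over the last three slots. Combined with the pair symmetry $R_{ijkl}=R_{klij}$, it yields the contracted identity
\[
\sum_l\nabla_lR_{klij}=\nabla_i\Ric_{kj}-\nabla_j\Ric_{ki},
\]
up to the sign convention for $\Ric$ fixed by the Gauss equation. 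That convention is $\Ric_{ij}=\sum_k R_{ikjk}$, with $R_{ijkl}=\delta_{ik}\delta_{jl}-\delta_{il}\delta_{jk}+\sum_\alpha(h^\alpha_{ik}h^\alpha_{jl}-h^\alpha_{il}h^\alpha_{jk})$. Substituting gives $-\nabla_i\Ric_{jk}+\nabla_j\Ric_{ik}$, which is \eqref{Yang-Mills equation for tangent}.

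The main obstacle is bookkeeping of signs. Three conventions interact: $\Omega_{ij}$ versus $R_{ijkl}$, the sign of $\delta$, and which index of $\Ric$ is contracted. I would fix them by checking against the round sphere, where $h=0$ and both sides vanish, and then by a quick check on an explicit term of the Gauss expansion. As a cross-check, one can differentiate the Gauss expression directly and use Codazzi, which makes $\nabla_l h^\alpha_{ik}$ totally symmetric. This reproduces the same $\nabla\Ric$ combination, since the constant-curvature term has zero covariant derivative.
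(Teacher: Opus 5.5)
Your proposal is correct and follows essentially the paper's route: evaluate $\delta$ at a point where the connection forms vanish, read off $\Omega^{\bot}_{\alpha\beta}(e_l,e_k)=[h^\alpha,h^\beta]_{lk}$ from the Ricci equation, and use the contracted second Bianchi identity for $\Omega$. The only difference is that you first apply pair symmetry so that $d_\nabla\Omega=0$ acts on the form slots, whereas the paper cycles the derivative index through the pair $ij$ directly; the two are equivalent.

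One sign correction: with your convention $\Ric_{ij}=\sum_k R_{ikjk}$, the cyclic identity is $\nabla_mR_{ijkl}+\nabla_kR_{ijlm}+\nabla_lR_{ijmk}=0$. Its contraction is $\sum_l\nabla_lR_{klij}=-\nabla_i\Ric_{kj}+\nabla_j\Ric_{ki}$, which is the opposite of the sign you displayed. It is this sign that yields the stated $-\nabla_i\Ric_{jk}+\nabla_j\Ric_{ik}$.
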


\begin{proof}
Since both sides of the identities are tensorial, it suffices to verify them at an arbitrary point $q \in M$ under a local orthonormal frame chosen such that $\theta_{ij}(q) = 0$ and $\theta_{\alpha\beta}(q) = 0$. Consequently, applying   \eqref{co-differential operator} we have
\begin{equation*}
\begin{aligned}
(\delta_{\nabla^{\bot}}\Omega^{\bot})_{\alpha\beta}(e_{k})&=(\delta_{\nabla^{\bot}}\Omega^{\bot}_{\alpha\beta})(e_{k})\\
 &= -\sum_{l}(\nabla_{e_{l}}\Omega^{\bot}_{\alpha\beta})(e_{l}, e_{k}) \\
&= -\sum_{l} e_{l} \left( \Omega^{\bot}_{\alpha\beta}(e_{l}, e_{k}) \right) \\
&= -\sum_{l} e_{l} \left(\sum_{j} (\theta_{j\alpha} \wedge \theta_{j\beta})(e_{l}, e_{k}) \right),
\end{aligned}
\end{equation*}

Substituting the second fundamental form $h^\alpha_{ij}$ into the connection forms $\theta_{i\alpha} = \sum\limits_j h^\alpha_{ij} \theta_j$, we obtain
\begin{equation*}
\begin{aligned}
(\delta_{\nabla^{\bot}}\Omega^{\bot})_{\alpha\beta}(e_{k}) &= -\sum_{l,j} e_l \left( h^{\alpha}_{lj}h^{\beta}_{kj} - h^{\alpha}_{kj}h^{\beta}_{lj} \right) \\
&= \sum_{l}\nabla_{e_l}[h^{\alpha}, h^{\beta}]_{kl}.
\end{aligned}
\end{equation*}
Here, $[\cdot, \cdot]$ represents the commutator of the shape operators.

Similarly, evaluated at the point $q$, the co-differential of $\Omega_{ij}$ is computed as follows:
\begin{equation*}
\begin{aligned}
(\delta_{\nabla}\Omega)_{ij}(e_{k}) &=(\delta_{\nabla}\Omega_{ij})(e_{k}) \\
&= -\sum_{l} (\nabla_{e_{l}}\Omega_{ij})(e_{l}, e_{k}) \\
&= -\sum_{l} e_{l} \left( \Omega_{ij}(e_{l}, e_{k}) \right) \\
&= -\sum_{l}e_l\Omega_{ijlk},
\end{aligned}
\end{equation*}
where $\Omega_{ijlk}$ represents the components of the Riemannian curvature tensor of $M$.

Applying the second Bianchi identity yields
\begin{equation*}
    \sum_{l} e_l \Omega_{ijlk} = -\sum_{l} \left(e_i \Omega_{jllk} + e_j \Omega_{lilk}\right) = \nabla_{i}\Ric_{jk} - \nabla_{j}\Ric_{ik}.
\end{equation*}

Combining these relations, we obtain
\begin{equation*}
    (\delta_{\nabla}\Omega)_{ij}(e_{k}) = -\nabla_{i}\Ric_{jk} + \nabla_{j}\Ric_{ik}.
\end{equation*}
\end{proof}

\subsection{Proof of Theorem \ref{thm-1}}
\begin{proof}  
We calculate the variation of the normal curvature functional by splitting it into the variation of the norm of the normal curvature and the variation of the volume:
\begin{equation*}
    \frac{d}{dt}\bigg|_{t=0} \int_{M} \|\Omega^{\bot}\|^{2} \, dvol = \int_{M} \frac{\partial}{\partial t}(\|\Omega^{\bot}\|^{2})\bigg|_{t=0} \, dvol + \int_{M} \|\Omega^{\bot}\|^{2} \frac{\partial}{\partial t}(dvol)\bigg|_{t=0}.
\end{equation*}

For the volume component, applying Chern's first-order variational formula for volume functional (cf. \cite{C-S-S}) and $a_i = 0$, we have
\begin{equation}\label{vol}
\begin{aligned}
    \int_{M} \|\Omega^{\bot}\|^{2} \frac{\partial}{\partial t}(dvol)\bigg|_{t=0} &= -\int_{M} \|\Omega^{\bot}\|^{2} \left( \sum_{\alpha,i} a_{\alpha}h^{\alpha}_{ii} \right) dvol \\
    &= -\int_{M} \|\Omega^{\bot}\|^{2} \left\langle \sum_{\alpha} a_{\alpha}e_{\alpha}, H \right\rangle dvol.
\end{aligned}
\end{equation}

For the variation of the norm of the normal curvature, using the inner product definition, we obtain
\begin{equation*}
    \frac{\partial}{\partial t}(\|\Omega^{\bot}\|^{2}) = 2 \left\langle \frac{\partial}{\partial t}\Omega^{\bot}, \Omega^{\bot} \right\rangle + 2\sum_{\alpha,\beta,i,j} \Omega^{\bot}_{\alpha\beta}(e_{i}, e_{j})\Omega^{\bot}_{\alpha\beta}\left(\frac{\partial}{\partial t}e_{i}, e_{j}\right).
\end{equation*}

By Lemma \ref{var of curvature forms}, it follows that
\begin{align*}
2\left\langle \frac{\partial}{\partial t}\Omega^{\bot}, \Omega^{\bot} \right\rangle &= -2\langle d_{\nabla^{\bot}}\tau^{\bot}, \Omega^{\bot}\rangle+2\langle [a^{\bot},\Omega^{\bot}], \Omega^{\bot}\rangle\\
&=-2\langle d_{\nabla^{\bot}}\tau^{\bot}, \Omega^{\bot}\rangle-2\sum\limits_{i<j}\trace(a^{\bot}\Omega^{\bot}(e_i,e_j)\Omega^{\bot}(e_i,e_j)-\Omega^{\bot}(e_i,e_j)a^{\bot}\Omega^{\bot}(e_i,e_j))\\
&=-2\langle d_{\nabla^{\bot}}\tau^{\bot}, \Omega^{\bot}\rangle.
\end{align*}

Integrating by parts over $M$ leads to
\begin{equation*}
    -2\int_{M} \langle d_{\nabla^{\bot}}\tau^{\bot}, \Omega^{\bot} \rangle \, dvol = -2\int_{M} \langle \tau^{\bot}, \delta_{\nabla^{\bot}}\Omega^{\bot} \rangle \, dvol.
\end{equation*}

By Lemma \ref{from structural equations}(2), $a_{i\alpha} = \nabla_{e_i}^{\bot}a_{\alpha}$. Using the classical Yang-Mills equation \eqref{Yang-Mills equation for normal}, we compute
\begin{equation}\label{tau-omega}
\begin{aligned}
    -2\int_{M} \langle \tau^{\bot}, \delta_{\nabla^{\bot}}\Omega^{\bot} \rangle \, dvol &= -2\int_{M} \sum_{\alpha, \beta, k} \tau^{\bot}_{\alpha\beta}(e_k) (\delta_{\nabla^{\bot}}\Omega^{\bot})_{\alpha\beta}(e_{k}) \, dvol \\
    &= 2\int_{M} \sum_{\alpha, \beta,i, k, l} (a_{i\alpha }\theta_{i\beta}-a_{i\beta }\theta_{i\alpha})(e_k) \nabla_{e_l}[h^\alpha, h^\beta]_{kl} \, dvol \\
    &= 4\int_{M} \sum_{\alpha, \beta,i, k, l} a_{ i\alpha}h^{\beta}_{ik} \nabla_{e_l}[h^\alpha, h^\beta]_{kl} \, dvol \\
    &= 4\int_{M} \sum_{\alpha, \beta,i, k, l} (\nabla_{e_{i}}^{\bot}a_{\alpha})h^{\beta}_{ik} \nabla_{e_l}[h^\alpha, h^\beta]_{kl} \, dvol \\
    &= -4\int_{M} \sum_{\alpha, \beta,i, k, l} a_{\alpha} \nabla_{e_i}\left(h^{\beta}_{ik} \nabla_{e_l}[h^\alpha, h^\beta]_{kl}\right) \, dvol.
\end{aligned}
\end{equation}

To calculate the second term of $\frac{\partial}{\partial t}(\|\Omega^{\bot}\|^{2})$, we first consider $\frac{\partial}{\partial t}e_{i}$. We know from Lemma \ref{from structural equations}(1) that
\[\frac{\partial}{\partial t}\bigg|_{t=0}\theta_i=\sum_j a_{ij}\theta_{j}-\sum_\alpha a_{\alpha}\theta_{i\alpha}=\sum_{j}\left(a_{ij}-\sum_{\alpha}a_{\alpha}h_{ij}^{\alpha}\right)\theta_{j}.\]
It thus can be deduced from the identity $0=\frac{\partial}{\partial t}(\theta_{i}(e_{j}))=\left(\frac{\partial}{\partial t}\theta_{i}\right)(e_{j})+\theta_{i}\left(\frac{\partial}{\partial t}e_{j}\right)$ that
\begin{equation}\label{partial t e_i}
\frac{\partial}{\partial t}\bigg|_{t=0}e_{i}=\sum_{j}\left(a_{ij}+\sum_{\alpha}a_{\alpha}h_{ij}^{\alpha}\right)e_{j}=:\sum_{j}\lambda_{ij}e_{j}.
\end{equation}

By contracting the normal curvature tensors via the Ricci equation, we find
\begin{equation}\label{algebraic}
\begin{aligned}
    2\sum_{\beta,\gamma,i,j} \Omega^{\bot}_{\beta\gamma}(e_{i}, e_{j})\Omega^{\bot}_{\beta\gamma}\left(\frac{\partial}{\partial t}e_{i}, e_{j}\right) &= 2\sum_{\beta,\gamma,i,j,k} \lambda_{ik}\Omega^{\bot}_{\beta\gamma}(e_{i}, e_{j})\Omega^{\bot}_{\beta\gamma}(e_{k}, e_{j}) \\
	&= 2\sum_{\beta,\gamma,i,j,k}\frac{\lambda_{ik}+\lambda_{ki}}{2}\,\Omega^{\bot}_{\beta\gamma}(e_{i}, e_{j})\Omega^{\bot}_{\beta\gamma}(e_{k}, e_{j}) \\
    &= 2\sum_{\alpha,\beta,\gamma,i,j,k} a_{\alpha}h^{\alpha}_{ki}\Omega^{\bot}_{\beta\gamma}(e_{i}, e_{j})\Omega^{\bot}_{\beta\gamma}(e_{k}, e_{j}) \\
    &= -2\sum_{\alpha,\beta,\gamma,i,j,k} a_{\alpha}h^{\alpha}_{ki}[h^{\beta}, h^{\gamma}]_{ij}[h^{\beta}, h^{\gamma}]_{jk} \\
    &= -4\sum_{\alpha,\beta,\gamma} a_{\alpha}\trace(h^{\alpha}h^{\beta}h^{\gamma}h^{\beta}h^{\gamma} - h^{\alpha}h^{\beta}h^{\gamma}h^{\gamma}h^{\beta}).
\end{aligned}
\end{equation}

Combining   \eqref{vol},   \eqref{tau-omega}, and   \eqref{algebraic}, we arrive at
\begin{equation*}
\begin{aligned}
    \frac{d}{d t}\bigg|_{t=0}\int_{M} \|\Omega^{\bot}\|^{2} \, dvol &= \int_{M} \sum_{\alpha} -4a_\alpha \Bigg( \sum_{\beta,i,k,l} \nabla_{e_i}\left(h^{\beta}_{ik} \nabla_{e_l}[h^\alpha, h^\beta]_{kl}\right) \\
    &\quad + \sum_{\beta,\gamma} \trace(h^{\alpha}h^{\beta}h^{\gamma}h^{\beta}h^{\gamma} - h^{\alpha}h^{\beta}h^{\gamma}h^{\gamma}h^{\beta}) \Bigg) \, dvol \\
    &\quad - \int_{M} \|\Omega^{\bot}\|^{2} \left\langle \sum_{\alpha} a_{\alpha} e_{\alpha}, H \right\rangle \, dvol \\
    &= -4\int_{M} \left\langle \sum_{\alpha} a_{\alpha} e_{\alpha}, \Hn \right\rangle \, dvol - \int_{M} \|\Omega^{\bot}\|^{2} \left\langle \sum_{\alpha} a_{\alpha} e_{\alpha}, H \right\rangle \, dvol \\
    &= \int_{M} \left\langle \frac{\partial F}{\partial t}\bigg|_{t=0}, -4\Hn - \|\Omega^{\bot}\|^{2}H \right\rangle \, dvol.
\end{aligned}
\end{equation*}

Thus, $M$ is a Normal-Yang-Mills submanifold if and only if 
\begin{equation*}
4\Hn + \|\Omega^{\bot}\|^{2}H = 0.
\end{equation*}

In particular, if $M$ is a minimal submanifold, this condition reduces to $\Hn=0$.
\end{proof}

\subsection{Proof of Theorem \ref{thm-2}}
\begin{proof}
We calculate the variation of the curvature functional by splitting it into the variation of the norm of the curvature and the variation of the volume:
\begin{equation*}
    \frac{d}{d t}\bigg|_{t=0}\int_{M} \|\Omega\|^{2} dvol = \int_{M} \frac{\partial}{\partial t}(\|\Omega\|^{2})\bigg|_{t=0} dvol + \int_{M} \|\Omega\|^{2} \frac{\partial}{\partial t}(dvol)\bigg|_{t=0}.
\end{equation*}

Similar to \eqref{vol}, the variation of the volume yields
\begin{equation}\label{vol-tangent}
    \int_{M} \|\Omega\|^{2} \frac{\partial}{\partial t}(dvol)\bigg|_{t=0} = -\int_{M} \|\Omega\|^{2} \left\langle \sum_{\alpha} a_{\alpha}e_{\alpha}, H \right\rangle dvol.
\end{equation}

For the variation of the norm of the curvature, we have
\begin{equation*}
    \frac{\partial}{\partial t}(\|\Omega\|^{2}) = 2\left\langle \frac{\partial}{\partial t}\Omega, \Omega \right\rangle + 2\sum_{i,j,p,q} \Omega_{ij}(e_{p},e_{q})\Omega_{ij}\left(\frac{\partial}{\partial t}e_{p},e_{q}\right).
\end{equation*}

By Lemma \ref{var of curvature forms}, we have $\frac{\partial}{\partial t}\Omega = -d_{\nabla}\tau + [a,\Omega]$. Since the inner product of the commutator $[a, \Omega]$ with $\Omega$ vanishes, the first term reduces to
\begin{equation}\label{dtheta-tau-tangent}
    2\left\langle \frac{\partial}{\partial t}\Omega, \Omega \right\rangle = -2\langle d_{\nabla}\tau, \Omega \rangle.
\end{equation}

Integrating \eqref{dtheta-tau-tangent} over $M$ by parts, we obtain
\begin{equation*}
    -2\int_{M} \langle d_{\nabla}\tau, \Omega \rangle dvol = -2\int_{M} \langle \tau, \delta_{\nabla}\Omega \rangle dvol.
\end{equation*}

By the variation identity from Lemma \ref{from structural equations}(2), $a_{i\gamma} = \nabla_{e_i}^{\bot}a_\gamma$. By the classical Yang-Mills equation \eqref{Yang-Mills equation for tangent}, we get
\begin{equation}\label{tau-omega-tangent}
\begin{aligned}
    -2\int_{M} \langle \tau, \delta_{\nabla}\Omega \rangle dvol &= -2\int_{M} \sum_{i,j,k} \tau_{ij}(e_k) (\delta_{\nabla}\Omega)_{ij}(e_{k}) dvol \\
    &= 4\int_{M} \sum_{\gamma,i, j,k,l} a_{i\gamma} h^{\gamma}_{jk} (-\nabla_{i}\Ric_{jk} + \nabla_{j}\Ric_{ik}) \, dvol \\
    &= 4\int_{M} \sum_{\gamma,i, j,k,l} (\nabla_{e_i}^{\bot}a_{\gamma})h^{\gamma}_{jk} (-\nabla_{i}\Ric_{jk} + \nabla_{j}\Ric_{ik}) \, dvol \\
    &= 4\int_{M} \sum_{\gamma,i, j,k,l} a_{\gamma}\nabla_i\left(h^{\gamma}_{jk} (\nabla_{i}\Ric_{jk} - \nabla_{j}\Ric_{ik})\right) dvol.
\end{aligned}
\end{equation}

For the second term of $\frac{\partial}{\partial t}(\|\Omega\|^{2})$, using the Gauss equation \eqref{Gauss-Codazzi-Ricci} and   \eqref{partial t e_i}, we obtain 
\begin{equation}\label{algebraic-tangent}
\begin{aligned}
2\sum_{i,j,k,l} \Omega_{ij}(e_{k},e_{l})\Omega_{ij}\left(\frac{\partial}{\partial t}e_{k},e_{l}\right)&=2\sum\limits_{\alpha,i,j,k,l,m} a_{\alpha}h_{km}^{\alpha}\Omega_{ij}(e_{k},e_{l})\Omega_{ij}(e_{m},e_{l}) \\ 
&= 4(n-1)\sum\limits_{\alpha}a_{\alpha} H^\alpha + 8\sum\limits_{\alpha, \beta}a_{\alpha} H^\beta \trace(h^\alpha h^\beta)\\ 
&\quad - 8\sum\limits_{\alpha,\beta} a_{\alpha}\trace(h^{\alpha}h^{\beta}h^{\beta}) \\ 
&\quad + 4\sum\limits_{\alpha,\beta,\gamma} a_{\alpha}\trace(h^{\alpha}h^{\beta}h^{\gamma})\trace(h^{\beta}h^{\gamma}) \\ 
&\quad - 4\sum\limits_{\alpha,\beta,\gamma} a_{\alpha}\trace(h^{\alpha}h^{\beta}h^{\gamma}h^{\beta}h^{\gamma}).
\end{aligned}
\end{equation}

Combining   \eqref{vol-tangent},   \eqref{tau-omega-tangent}, and   \eqref{algebraic-tangent}, we derive
\begin{align*}
    \frac{d}{d t}\bigg|_{t=0}\int_{M} \|\Omega\|^{2} dvol
    &= \int_{M} \sum_{\alpha} a_\alpha \Bigg( 4\sum_{i,j,k} \nabla_i\left(h^{\alpha}_{jk}(\nabla_{i}\Ric_{jk} - \nabla_{j}\Ric_{ik})\right) \\
    &\quad + 4(n-1)H^\alpha + 8\sum\limits_{\beta}H^\beta \trace(h^\alpha h^\beta)- 8\sum_{\beta}\trace(h^{\alpha}h^{\beta}h^{\beta}) \\
    &\quad + 4\sum_{\beta,\gamma}\trace(h^{\alpha}h^{\beta}h^{\gamma})\trace(h^{\beta}h^{\gamma})- 4\sum_{\beta,\gamma}\trace(h^{\alpha}h^{\beta}h^{\gamma}h^{\beta}h^{\gamma}) \Bigg) dvol \\
    &\quad - \int_{M} \|\Omega\|^{2} \left\langle \sum_{\alpha} a_{\alpha}e_{\alpha}, H \right\rangle dvol
\end{align*}
\begin{align*} 
    &= \int_{M} \left\langle \sum_{\alpha} a_{\alpha}e_{\alpha}, 4\Ht \right\rangle dvol + \int_{M} \left\langle \sum_{\alpha} a_{\alpha}e_{\alpha}, \Bigg(4(n-1) - \|\Omega\|^{2}\Bigg)H \right\rangle dvol     \\
    &\quad + \int_{M} \left\langle \sum_{\alpha} a_{\alpha}e_{\alpha},  8\sum_{\alpha,\beta} H^\beta \trace(h^\alpha h^\beta) e_\alpha \right\rangle dvol\\
    &= \int_{M} \left\langle \frac{\partial F}{\partial t}\bigg|_{t=0}, 4\Ht + \Big(4(n-1) - \|\Omega\|^{2}\Big)H+8\sum_{\alpha,\beta} H^\beta \trace(h^\alpha h^\beta) e_\alpha \right\rangle dvol.
\end{align*}

Therefore, $M$ is a Tangent-Yang-Mills submanifold if and only if it satisfies the Euler-Lagrange equation

\begin{equation*}
4\Ht + \Big(4(n-1) - \|\Omega\|^{2}\Big)H+8\sum_{\alpha,\beta} H^\beta \trace(h^\alpha h^\beta) e_\alpha  = 0.
\end{equation*}

In particular, if $M$ is a minimal submanifold, this condition reduces to $\Ht=0$.
\end{proof}

\section{Examples of Normal and Tangent Yang-Mills Submanifolds}

This section discusses non-trivial examples of Normal-Yang-Mills and Tangent-Yang-Mills submanifolds whose respective curvature tensors do not satisfy the classical Yang-Mills equations.

\subsection{OT-FKM Isoparametric Hypersurfaces and Focal Submanifolds}

We first briefly review the algebraic foundations of symmetric Clifford systems. A symmetric Clifford system $(P_{0},\dots,P_{m})$ on $\mathbb{R}^{2l}$ is an $(m+1)$-tuple of orthogonal symmetric matrices that satisfy $P_{i}P_{j}+P_{j}P_{i}=2\delta_{ij}I_l$, where $l = k\delta(m)$ for some positive integer $k$, and $\delta(m)$ is the dimension of the irreducible module of the Clifford algebra $\mathcal{Cl}_{m-1}$. The dimensions $\delta(m)$ satisfy the relation $\delta(m+8)=16\delta(m)$ and are given for small values of $m$ in Table \ref{table-1} below:
\begin{table}[h]
  \centering
  \caption{Irreducible dimension for Clifford algebra $\mathcal{Cl}_{m-1}$} \label{table-1}
  \begin{tabular}{|c|c|c|c|c|c|c|c|c|c|}
  \hline
  $m$&1&2&3&4&5&6&7&8&$\cdots m+8$\\
  \hline
  $\delta(m)$&1&2&4&4&8&8&8&8&$\cdots 16\delta(m)$\\
  \hline
  \end{tabular}
\end{table}

For a given symmetric Clifford system on $\mathbb{R}^{2l}$, we can choose a set of orthogonal matrices $\{ E_{1}, E_{2}, \cdots, E_{m-1} \}$ on $\mathbb{R}^{l}$ with the Euclidean metric, which satisfy $E_{\alpha}E_{\beta}+E_{\beta}E_{\alpha}=-2\delta_{\alpha\beta}I_l$ for $1\leq\alpha,\beta\leq m-1$ and
\begin{equation}\label{Clifford-sys-alg}
\begin{aligned}
    &P_{0}=\begin{pmatrix}
        I_l & 0 \\
        0  & -I_l
    \end{pmatrix},
    P_{1}=\begin{pmatrix}
        0 & I_l \\
       I_l & 0
    \end{pmatrix}, \\
    &P_{\alpha}=\begin{pmatrix}
        0 & E_{\alpha-1} \\
        -E_{\alpha-1}  & 0
    \end{pmatrix}, \ \mathrm{for} \ 2\leq\alpha\leq m.
\end{aligned}
\end{equation}

As proved in \cite{F-K-M}, the algebraic structure of these systems depends on $m$. When $m\not\equiv 0 \pmod{4}$, there exists exactly one kind of OT-FKM type isoparametric family. When $m\equiv 0 \pmod{4}$, there are distinct kinds of OT-FKM type isoparametric families distinguished by $\trace(P_{0}P_{1}\cdots P_{m})$. Specifically, the family with $P_{0}P_{1}\cdots P_{m}=\pm I_l$, where we take the $+$ sign without loss of generality, is called the definite family, and the others with $P_{0}P_{1}\cdots P_{m}\neq \pm I_l$ are called indefinite.

Based on the Clifford system, we define the following homogeneous polynomial on $\mathbb{R}^{2l}$ of degree 4 (see \cite{F-K-M}):
$$\tilde{f}(x)=|x|^{4}-2\sum_{i=0}^{m}\langle P_{i}x, x \rangle^{2}.$$
The restriction to the unit sphere $f=\tilde{f}|_{S^{2l-1}}$ has $\mathrm{Im}f=[-1,1]$. An isoparametric hypersurface $M^{2l-2}$ of OT-FKM type is any regular level set of $f$, that is, $M=f^{-1}(\cos 4\theta)$ for some constant $0<\theta<\tfrac{\pi}{4}$ (See the classifications and developments of isoparametric hypersurfaces in the excellent book \cite{CR2015} and recent surveys \cite{Chi18, GQTY2025} and references therein).  It has four distinct constant principal curvatures $\lambda_{k}=\cot(\theta+\tfrac{k-1}{4}\pi)$ with multiplicities $m_{1},m_{2},m_{1},m_{2}$ respectively, where $m_{1}=m$ and $m_{2}=l-m-1$. A table of small multiplicities $(m_1, m_2)$ is provided below, where there are number of underlines distinct kinds of indefinite type for $m\equiv 0 \pmod{4}$.

\begin{table}[h]\centering\footnotesize 
\caption{Multiplicities $(m,l-m-1)$ for OT-FKM isoparametric hypersurfaces} \label{table-2}
\setlength{\tabcolsep}{0.8pt} 
\renewcommand{\arraystretch}{1.2} 
\begin{tabular}{|c|ccccccccccccc|}\hline$k \setminus \delta(m)$ & 1 & 2 & 4 & 4 & 8 & 8 & 8 & 8 & 16 & 32 & 64 & 64 & $\dots$ \\ \hline1 & $-$ & $-$ & $-$ & $-$ & (5, 2) & (6, 1) & $-$ & $-$ & (9, 6) & (10, 21) & (11, 52) & (12, 51) & $\dots$ \\2 & $-$ & (2, 1) & (3, 4) & \underline{(4, 3)} & (5, 10) & (6, 9) & (7, 8) & \underline{(8, 7)} & (9, 22) & (10, 53) & (11, 116) & \underline{(12, 115)} & $\dots$ \\3 & (1, 1) & (2, 3) & (3, 8) & \underline{(4, 7)} & . & . & . & \underline{(8, 15)}& . & . & . & \underline{(12, 179)} & $\dots$ \\4 & (1, 2) & (2, 5) & (3, 12) & \underline{\underline{(4, 11)}} & . & . & . & \underline{\underline{(8, 23)}} & . & . & . & \underline{\underline{(12, 243)}} & $\dots$ \\5 & (1, 3) & (2, 7) & (3, 16) & \underline{\underline{(4, 15)}} & . & . & . & \underline{\underline{(8, 31)}} & . & . & . & \underline{\underline{(12, 307)}} & $\dots$ \\$\vdots$ & $\vdots$ & $\vdots$ & $\vdots$ & $\vdots$ & $\vdots$ & $\vdots$ & $\vdots$ & $\vdots$ & $\vdots$ & $\vdots$ & $\vdots$ & $\vdots$ & $\ddots$ \\ \hline
\end{tabular}\end{table}

The two singular level sets $M_{\pm}=f^{-1}(\pm1)$ are called focal submanifolds. In particular, the focal submanifold $M_+$ of this OT-FKM type is given by
\begin{equation}\label{def-M+}
\begin{aligned}
    M_{+}=\{ x\in S^{2l-1} \mid \langle P_{0}x, x \rangle=\cdots=\langle P_{m}x, x \rangle=0 \},
\end{aligned}
\end{equation}
where the normal space of $M_{+}$ at $x$ is equal to $\mathrm{Span}\{P_{0}x, P_{1}x, \cdots, P_{m}x\}$.

\subsection{Local Formulas on $M_+$}

Let $x \in M_+$. For any vectors $X, Y$ in the Euclidean space $\mathbb{R}^{2l}$, we denote their tangent components on $T_x M_+$ by $X^{\top}, Y^{\top}$, and their normal components in $\mathbb{R}^{2l}$ by $X^{\bot}, Y^{\bot}$. Their inner product can be orthogonally decomposed as
\begin{equation}\label{Vector decomposition}
\langle X, Y \rangle=\langle X^{\top}, Y^{\top} \rangle+\langle X^{\bot}, Y^{\bot} \rangle.
\end{equation}

Taking a local orthonormal basis $\{e_k\}$ of $T_x M_+$, the inner product of the tangent components can be computed by
\begin{equation}\label{Calculation of tangent components}
\langle X^{\top}, Y^{\top} \rangle=\sum\limits_{k}\langle X, e_k \rangle\langle Y, e_k \rangle.
\end{equation}

Since the normal space of $M_+$ in $\mathbb{R}^{2l}$ is orthonormally spanned by $x$ and $\{P_\gamma x\}_{\gamma=0}^m$, the inner product of the normal components can be computed as follows
\begin{equation}\label{Calculation of normal components}
\langle X^{\bot}, Y^{\bot} \rangle=\langle X, x \rangle\langle Y, x \rangle+\sum\limits_{\gamma}\langle X, P_{\gamma}x \rangle\langle Y, P_{\gamma}x \rangle.
\end{equation}

The shape operator corresponding to the normal vector $P_{\alpha}x$ can be represented by $A_{\alpha}(v)=-(P_{\alpha}v)^{\top}$ for any tangent vector $v$. Thus, the second fundamental form satisfies
\begin{equation}\label{second fundamental form}
h_{ij}^{\alpha}=\langle A_{\alpha}(e_{i}), e_{j} \rangle =-\langle (P_{\alpha}e_{i})^{\top}, e_{j} \rangle =-\langle P_{\alpha}e_{i}, e_{j} \rangle.
\end{equation}

In particular, the minimality of $M_{+}$ is equivalent to the condition:
\begin{equation*}
\forall \alpha,\ \sum\limits_{i}h^{\alpha}_{ii}=-\sum\limits_{i}\langle P_{\alpha}e_{i}, e_{i} \rangle=0.
\end{equation*}

From the anti-commutativity of the Clifford system, we have $\langle P_{\alpha}P_{\beta}x,x\rangle=\delta_{\alpha\beta}$ for any $x$ on the sphere. Furthermore, for any $x \in M_+$, the following identity holds
\begin{equation}\label{P_alpha P_beta P_gamma}
\langle P_{\alpha}P_{\beta}P_{\gamma}x, x \rangle=0
\end{equation}
for any choice of $\alpha,\beta,\gamma\in\{0,\dots,m\}$.

For $\alpha,\beta,\gamma\in\{0,\dots,m\}$, we define two fundamental tensors on $M_+$. For any $v\in T_{x}M_{+}$, let
\begin{align*}
g_{\alpha\beta}(v)&:=\langle P_{\alpha}P_{\beta}x, v \rangle,\\
g_{\alpha\beta\gamma}(v)&:=\langle P_{\alpha}P_{\beta}P_{\gamma}x, v \rangle.
\end{align*}

\begin{lem}\label{h alpha g beta gamma}
The tensors $g_{\alpha\beta}$ and $g_{\alpha\beta\gamma}$ satisfy the following properties:
\begin{enumerate}
\item $g_{\alpha\beta}$ and $g_{\alpha\beta\gamma}$ are anti-symmetric with respect to the indices $\alpha,\beta,\gamma$.
\item $\sum\limits_{j}h^{\alpha}_{ij}g_{\beta\gamma}(e_{j})=-g_{\alpha\beta\gamma}(e_{i})$.
\item $\sum\limits_{j}h^{\alpha}_{ij}g_{\beta\gamma\delta}(e_{j})=-\langle P_{\alpha}P_{\beta}P_{\gamma}P_{\delta}x, e_{i} \rangle+\sum\limits_{\varepsilon}g_{\alpha\varepsilon}(e_{i})\langle P_{\varepsilon}P_{\beta}P_{\gamma}P_{\delta}x, x \rangle$.
\item $\sum\limits_{i} g_{\alpha\beta}(e_{i})g_{\gamma\delta\varepsilon}(e_{i})=-\langle P_{\alpha}P_{\beta}P_{\gamma}P_{\delta}P_{\varepsilon}x, x \rangle$.
\end{enumerate}
\end{lem}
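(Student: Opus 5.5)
The proof will use only the Clifford relations $P_\alpha P_\beta+P_\beta P_\alpha=2\delta_{\alpha\beta}I$ together with the orthogonal decomposition \eqref{Vector decomposition}, computed through \eqref{Calculation of tangent components} and \eqref{Calculation of normal components}. The basic device is to write a sum $\sum_j \langle X,e_j\rangle\langle Y,e_j\rangle=\langle X^\top,Y^\top\rangle$ and then subtract the normal components along $x$ and the $P_\varepsilon x$.

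\textbf{Part (1).} For $\alpha\neq\beta$, anticommutation gives $P_\alpha P_\beta x=-P_\beta P_\alpha x$, so $g_{\alpha\beta}$ is antisymmetric in its two indices. For $\alpha=\beta$ we have $P_\alpha^2=I$, hence $g_{\alpha\alpha}(v)=\langle x,v\rangle=0$ because $v$ is tangent to the sphere. For $g_{\alpha\beta\gamma}$, swapping adjacent distinct indices again costs a sign. If two indices coincide, anticommutation reduces the product to $\pm P_\delta x$, which is normal to $M_+$, so the pairing with $v$ vanishes. This gives full antisymmetry.

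\textbf{Part (2).} By \eqref{second fundamental form}, $\sum_j h^\alpha_{ij}g_{\beta\gamma}(e_j)=-\sum_j\langle P_\alpha e_i,e_j\rangle\langle P_\beta P_\gamma x,e_j\rangle=-\langle (P_\alpha e_i)^\top,(P_\beta P_\gamma x)^\top\rangle$. I expand this as the full inner product minus the normal parts, using symmetry of $P_\alpha$: $\langle P_\alpha e_i,P_\beta P_\gamma x\rangle=\langle e_i,P_\alpha P_\beta P_\gamma x\rangle=g_{\alpha\beta\gamma}(e_i)$. The normal corrections vanish. The $x$-component is zero since $\langle P_\alpha e_i,x\rangle=\langle e_i,P_\alpha x\rangle=0$. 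The $P_\varepsilon x$-components are $\langle P_\alpha e_i,P_\varepsilon x\rangle\langle P_\beta P_\gamma x,P_\varepsilon x\rangle$, and the second factor is $\langle P_\varepsilon P_\beta P_\gamma x,x\rangle=0$ by \eqref{P_alpha P_beta P_gamma}. Hence the sum equals $-g_{\alpha\beta\gamma}(e_i)$.

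\textbf{Part (3).} The computation is the same with $P_\beta P_\gamma P_\delta x$ in place of $P_\beta P_\gamma x$. The full inner product gives $-\langle P_\alpha P_\beta P_\gamma P_\delta x,e_i\rangle$. The $x$-correction again vanishes. The $P_\varepsilon x$-correction is now nonzero: $\langle P_\alpha e_i,P_\varepsilon x\rangle\langle P_\beta P_\gamma P_\delta x,P_\varepsilon x\rangle=g_{\alpha\varepsilon}(e_i)\langle P_\varepsilon P_\beta P_\gamma P_\delta x,x\rangle$. Subtracting it with the overall minus sign produces the stated term with a plus sign.

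\textbf{Part (4).} Here $\sum_i g_{\alpha\beta}(e_i)g_{\gamma\delta\varepsilon}(e_i)=\langle (P_\alpha P_\beta x)^\top,(P_\gamma P_\delta P_\varepsilon x)^\top\rangle$. The full inner product is $\langle P_\beta P_\alpha P_\gamma P_\delta P_\varepsilon x,x\rangle$. The normal corrections are the following:
\begin{itemize}
\item the $x$-part contains the factor $\langle P_\gamma P_\delta P_\varepsilon x,x\rangle=0$;
\item each $P_\zeta x$-part contains the factor $\langle P_\alpha P_\beta x,P_\zeta x\rangle=\langle P_\zeta P_\alpha P_\beta x,x\rangle=0$, again by \eqref{P_alpha P_beta P_gamma}.
\end{itemize}
It remains to convert $P_\beta P_\alpha$ into $-P_\alpha P_\beta$. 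If $\alpha\neq\beta$ this is anticommutation. If $\alpha=\beta$ both sides vanish: the left side because $g_{\alpha\alpha}=0$, and the right side because it reduces to $\langle P_\gamma P_\delta P_\varepsilon x,x\rangle=0$. This gives $-\langle P_\alpha P_\beta P_\gamma P_\delta P_\varepsilon x,x\rangle$.

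\textbf{Main obstacle.} The delicate step is the sign bookkeeping and the degenerate cases with repeated indices, especially in (3) and (4), where a normal correction survives. I would check these directly on each case.
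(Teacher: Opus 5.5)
Your proposal is correct and follows the paper's proof essentially step for step. You use the same tangent/normal splitting via \eqref{Calculation of tangent components} and \eqref{Calculation of normal components}: the normal corrections vanish by \eqref{P_alpha P_beta P_gamma} in (2) and (4), and in (3) the surviving $P_\varepsilon x$-correction produces the extra term. Your explicit handling of the degenerate cases (the vanishing $x$-corrections, and $\alpha=\beta$ in (4)) just fills in details the paper leaves implicit.
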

\begin{proof}
(1) For any tangent vector $v$ of $M_+$, it is clear that $g_{\alpha\alpha}(v)=\langle x,v\rangle=0$. Assume $\alpha\neq \beta$, then $g_{\alpha\beta}=-g_{\beta\alpha}$ follows immediately from the anti-commutativity of the Clifford system. Since $P_{\gamma}x$ is a normal vector of $M_{+}$ at $x$, we have
\begin{equation*}
g_{\alpha\beta\gamma}(v)=\langle P_{\alpha}P_{\beta}P_{\gamma}x, v \rangle=
\begin{cases}
\langle P_{\gamma}x, v \rangle  & \alpha=\beta\\
-\langle P_{\beta}P_{\alpha}P_{\gamma}x, v \rangle & \alpha\neq\beta
\end{cases}
=-g_{\beta\alpha\gamma}(v).
\end{equation*}
Similarly, $g_{\alpha\beta\gamma}=-g_{\alpha\gamma\beta}$, which implies $g_{\alpha\beta\gamma}$ is anti-symmetric with respect to $\alpha,\beta,\gamma$.

(2) Using \eqref{Vector decomposition}, \eqref{Calculation of tangent components} and \eqref{Calculation of normal components}, we obtain
\begin{align*}
\sum_{j}h^{\alpha}_{ij}g_{\beta\gamma}(e_{j})&=-\sum_{j}\langle P_{\alpha}e_{i},e_{j}\rangle \langle P_{\beta}P_{\gamma}x,e_{j}\rangle\\
&=-\langle (P_{\alpha}e_{i})^{\top}, (P_{\beta}P_{\gamma}x)^{\top}\rangle\\
&=-\left(\langle P_{\alpha}e_{i}, P_{\beta}P_{\gamma}x \rangle-\langle P_{\alpha}e_{i}, x \rangle \langle P_{\beta}P_{\gamma}x,x \rangle-\sum_{\delta}\langle P_{\alpha}e_{i}, P_{\delta}x \rangle \langle P_{\beta}P_{\gamma}x,P_{\delta}x \rangle\right).
\end{align*}

By   \eqref{P_alpha P_beta P_gamma}, $\langle P_{\beta}P_{\gamma}x, P_{\delta}x \rangle$ vanishes for each $\delta$. It follows that$$\sum_{j}h^{\alpha}_{ij}g_{\beta\gamma}(e_{j}) = -\langle P_{\alpha}e_{i}, P_{\beta}P_{\gamma}x \rangle = -g_{\alpha\beta\gamma}(e_{i}).$$

(3) Applying the same decomposition to $\sum\limits_{j}h^{\alpha}_{ij}g_{\beta\gamma\delta}(e_{j})$, we can obtain
\begin{align*}
\sum_{j}h^{\alpha}_{ij}g_{\beta\gamma\delta}(e_{j}) &= -\langle (P_{\alpha}e_{i})^{\top}, (P_{\beta}P_{\gamma}P_{\delta}x)^{\top}\rangle \\
&= -\langle P_{\alpha}e_{i}, P_{\beta}P_{\gamma}P_{\delta}x \rangle + \sum_{\varepsilon}\langle P_{\alpha}e_{i}, P_{\varepsilon}x\rangle\langle P_{\beta}P_{\gamma}P_{\delta}x, P_{\varepsilon}x\rangle \\
&= -\langle P_{\alpha}P_{\beta}P_{\gamma}P_{\delta}x, e_{i} \rangle + \sum_{\varepsilon}g_{\alpha\varepsilon}(e_{i})\langle P_{\varepsilon}P_{\beta}P_{\gamma}P_{\delta}x, x\rangle.
\end{align*}

(4) Summing over the orthonormal basis $\{e_i\}$ of $T_x M_+$, we have
\begin{align*}
\sum_{i} g_{\alpha\beta}(e_{i})g_{\gamma\delta\varepsilon}(e_{i}) &= \langle (P_{\alpha}P_{\beta}x)^{\top}, (P_{\gamma}P_{\delta}P_{\varepsilon}x)^{\top} \rangle \\
&= \langle P_{\alpha}P_{\beta}x, P_{\gamma}P_{\delta}P_{\varepsilon}x \rangle - \langle P_{\alpha}P_{\beta}x, x \rangle\langle P_{\gamma}P_{\delta}P_{\varepsilon}x, x \rangle \\
&\quad - \sum_{\mu}\langle P_{\alpha}P_{\beta}x, P_{\mu}x \rangle\langle P_{\gamma}P_{\delta}P_{\varepsilon}x, P_{\mu}x \rangle \\
&= -\langle P_{\alpha}P_{\beta}P_{\gamma}P_{\delta}P_{\varepsilon}x, x \rangle.
\end{align*}
\end{proof}

\begin{lem}\label{h alpha h beta}
The components of the second fundamental form satisfy the relation:
\begin{equation*}
(h^{\alpha}h^{\beta})_{ij}=\langle P_{\alpha}e_{i}, P_{\beta}e_{j} \rangle-\sum\limits_{\gamma\neq\alpha,\beta}g_{\alpha\gamma}(e_{i})g_{\beta\gamma}(e_{j}).
\end{equation*}
In particular, 
\begin{equation*}
\sum\limits_{\alpha}(h^{\alpha})^{2}_{ij}=(m+1)\delta_{ij}-\sum\limits_{\alpha}\sum\limits_{\gamma\neq\alpha}g_{\alpha\gamma}(e_{i})g_{\alpha\gamma}(e_{j}).
\end{equation*}
\end{lem}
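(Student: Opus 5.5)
The plan is to compute $(h^{\alpha}h^{\beta})_{ij}=\sum_k h^{\alpha}_{ik}h^{\beta}_{kj}$ directly, using the explicit form \eqref{second fundamental form} of the second fundamental form and the orthogonal splitting of $\mathbb{R}^{2l}$ at $x$ into $T_xM_+$ and the normal directions. By \eqref{second fundamental form} and the symmetry of $P_\beta$,
\begin{equation*}
(h^{\alpha}h^{\beta})_{ij}=\sum_{k}\langle P_{\alpha}e_{i},e_{k}\rangle\langle P_{\beta}e_{j},e_{k}\rangle=\langle (P_{\alpha}e_{i})^{\top},(P_{\beta}e_{j})^{\top}\rangle,
\end{equation*}
by \eqref{Calculation of tangent components}. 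Then \eqref{Vector decomposition} and \eqref{Calculation of normal components} give
\begin{equation*}
(h^{\alpha}h^{\beta})_{ij}=\langle P_{\alpha}e_{i},P_{\beta}e_{j}\rangle-\langle P_{\alpha}e_{i},x\rangle\langle P_{\beta}e_{j},x\rangle-\sum_{\gamma}\langle P_{\alpha}e_{i},P_{\gamma}x\rangle\langle P_{\beta}e_{j},P_{\gamma}x\rangle.
\end{equation*}

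Next I simplify the subtracted terms. First, $\langle P_{\alpha}e_i,x\rangle=\langle e_i,P_{\alpha}x\rangle=0$ because $P_\alpha x$ is normal, so the middle term drops. For the last sum, $\langle P_{\alpha}e_i,P_{\gamma}x\rangle=\langle e_i,P_{\alpha}P_{\gamma}x\rangle=g_{\alpha\gamma}(e_i)$, and similarly $\langle P_{\beta}e_j,P_\gamma x\rangle=g_{\beta\gamma}(e_j)$. By Lemma \ref{h alpha g beta gamma}(1), $g_{\alpha\alpha}=0$, so the terms with $\gamma=\alpha$ vanish (and likewise $\gamma=\beta$). Hence the sum runs over $\gamma\neq\alpha,\beta$, which gives the first formula.

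For the particular case I set $\beta=\alpha$ and sum over $\alpha$. Since $P_\alpha$ is symmetric and orthogonal, $P_\alpha^2=I$, so $\langle P_\alpha e_i,P_\alpha e_j\rangle=\delta_{ij}$, and summing over the $m+1$ indices yields $(m+1)\delta_{ij}$. The remaining term is $-\sum_\alpha\sum_{\gamma\neq\alpha}g_{\alpha\gamma}(e_i)g_{\alpha\gamma}(e_j)$.

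There is no real obstacle. The only points needing care are the sign and index bookkeeping: using the symmetry of $P_\alpha$ to move it across the inner product, and noting that the $x$-component vanishes because $x$ is orthogonal to $P_\alpha e_i$ for tangent $e_i$.
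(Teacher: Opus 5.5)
Your proposal is correct and follows the paper's own argument. Both write $(h^{\alpha}h^{\beta})_{ij}=\langle (P_{\alpha}e_i)^{\top},(P_{\beta}e_j)^{\top}\rangle$, subtract the normal components using the orthonormal normal frame $\{x,P_{\gamma}x\}$ (where the $x$-term and the terms with $\gamma\in\{\alpha,\beta\}$ vanish), and then set $\beta=\alpha$ and use $P_{\alpha}^2=I$ to obtain $(m+1)\delta_{ij}$.
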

\begin{proof}
From   \eqref{Calculation of normal components}, it follows that
\begin{equation*}\begin{aligned}
\langle (P_{\alpha}e_{i})^{\bot}, (P_{\beta}e_{j})^{\bot} \rangle&=\langle P_{\alpha}e_{i}, x \rangle\langle P_{\beta}e_{i}, x \rangle+\sum\limits_{\gamma\neq\alpha,\beta}\langle P_{\alpha}e_{i}, P_{\gamma}x \rangle\langle P_{\beta}e_{j}, P_{\gamma}x \rangle \\
&=\sum\limits_{\gamma\neq\alpha,\beta}g_{\alpha\gamma}(e_{i})g_{\beta\gamma}(e_{j}).
\end{aligned}
\end{equation*}
Combining this with   \eqref{Vector decomposition} and   \eqref{second fundamental form}, we compute
\begin{align*}
(h^{\alpha}h^{\beta})_{ij}&=\sum_{k}h_{ik}^{\alpha}h_{kj}^{\beta} \\
&=\sum_{k}\langle P_{\alpha}e_{i}, e_{k} \rangle\langle P_{\beta}e_{k}, e_{j} \rangle \\
&=\langle (P_{\alpha}e_{i})^{\top}, (P_{\beta}e_{j})^{\top} \rangle\\
&=\langle P_{\alpha}e_{i}, P_{\beta}e_{j} \rangle-\langle (P_{\alpha}e_{i})^{\bot}, (P_{\beta}e_{j})^{\bot} \rangle \\
&=\langle P_{\alpha}e_{i}, P_{\beta}e_{j} \rangle-\sum\limits_{\gamma\neq\alpha,\beta}g_{\alpha\gamma}(e_{i})g_{\beta\gamma}(e_{j}).
\end{align*}

Letting $\beta=\alpha$, we sum over $\alpha$ to obtain
\begin{align*}
\sum\limits_{\alpha}(h^{\alpha})^{2}_{ij}&=\sum\limits_{\alpha}\langle P_{\alpha}e_{i}, P_{\alpha}e_{j} \rangle-\sum\limits_{\alpha}\sum\limits_{\gamma\neq\alpha}g_{\alpha\gamma}(e_{i})g_{\alpha\gamma}(e_{j}) \\
&=(m+1)\delta_{ij}-\sum\limits_{\alpha}\sum\limits_{\gamma\neq\alpha}g_{\alpha\gamma}(e_{i})g_{\alpha\gamma}(e_{j}).
\end{align*}
\end{proof}

To simplify notation, we introduce a symmetric $(0,2)$-tensor $G$ defined by 
\begin{equation*}
G_{ij}=\sum\limits_{\gamma}\sum\limits_{\delta\neq\gamma}g_{\gamma\delta}(e_{i})g_{\gamma\delta}(e_{j}).
\end{equation*}

With this definition, the second identity in Lemma \ref{h alpha h beta} can be rewritten as
\begin{equation*}
\sum\limits_{\alpha}(h^{\alpha})^{2}_{ij}=(m+1)\delta_{ij}-G_{ij}.
\end{equation*}

\begin{lem}\label{trace degree 5}
On the focal submanifold $M_{+}$, the second fundamental form satisfies the following trace identities:
\begin{align*}
\trace\sum_{\beta,\gamma}(h^{\alpha}h^{\beta}h^{\gamma}h^{\gamma}h^{\beta})&=2\sum\limits_{\beta\neq\alpha}\sum\limits_{\gamma\neq\alpha,\beta}\sum\limits_{\delta\neq\alpha,\beta,\gamma}\sum\limits_{\varepsilon\neq\alpha,\beta,\gamma,\delta}\langle P_{\beta}P_{\gamma}P_{\delta}P_{\varepsilon}x, x\rangle\langle P_{\alpha}P_{\beta}P_{\gamma}P_{\delta}P_{\varepsilon}x, x\rangle, \\
\trace\sum\limits_{\beta,\gamma}(h^{\alpha}h^{\beta}h^{\gamma}h^{\beta}h^{\gamma})&=-\sum\limits_{\beta\neq\alpha}\sum\limits_{\gamma\neq\alpha,\beta}\sum\limits_{\delta\neq\alpha,\beta,\gamma}\sum\limits_{\varepsilon\neq\alpha,\beta,\gamma,\delta}\langle P_{\beta}P_{\gamma}P_{\delta}P_{\varepsilon}x, x \rangle\langle P_{\alpha}P_{\beta}P_{\gamma}P_{\delta}P_{\varepsilon}x, x \rangle.
\end{align*}
\end{lem}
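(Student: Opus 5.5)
Write each five-fold trace as a sum of products of simpler tensors, using the formula for $(h^\alpha h^\beta)_{ij}$ from Lemma \ref{h alpha h beta} together with the contraction identities of Lemma \ref{h alpha g beta gamma}. The key observation is that $h^\alpha h^\beta$ splits into a "Clifford part" $\langle P_\alpha e_i,P_\beta e_j\rangle$ and a rank-type correction $\sum_{\gamma\neq\alpha,\beta} g_{\alpha\gamma}\otimes g_{\beta\gamma}$. Traces of products built only from the Clifford parts reduce, after summing over the tangent basis with \eqref{Vector decomposition}–\eqref{Calculation of normal components}, to $\trace$ over $\mathbb{R}^{2l}$ minus normal corrections. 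Since the $P$'s are orthogonal and anticommute, these pure pieces telescope to terms independent of the ordering (e.g.\ $\pm\trace(P_\alpha)$-type quantities). Such terms cancel or vanish by minimality, and by \eqref{P_alpha P_beta P_gamma}.

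Concretely, I would first compute the degree-3 object $\sum_\beta h^\beta h^\gamma h^\beta$ (or $h^\beta h^\gamma h^\gamma h^\beta$), viewed as a matrix. For $\beta\ne\gamma$, moving $P_\beta$ past $P_\gamma$ through the tangent projection gives $-P_\gamma$ plus correction terms. These corrections are expressible through $g_{\beta\delta}$ and $g_{\beta\gamma\delta}$, using item (2) of Lemma \ref{h alpha g beta gamma} to convert $h^\alpha g_{\beta\gamma}$ into $-g_{\alpha\beta\gamma}$. Then I multiply by $h^\alpha h^{\gamma}$ (resp.\ $h^\alpha h^\beta$) and take the trace. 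Any remaining contraction of the form $h\,g_{\beta\gamma\delta}$ is handled with item (3), and contractions $\sum_i g_{\alpha\beta}(e_i)g_{\gamma\delta\varepsilon}(e_i)$ with item (4). This converts everything into sums of $\langle P_{\beta}P_\gamma P_\delta P_\varepsilon x,x\rangle\langle P_\alpha P_\beta P_\gamma P_\delta P_\varepsilon x,x\rangle$.

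The antisymmetry in item (1) then forces the indices in each surviving product to be pairwise distinct. Whenever two indices coincide, the Clifford relation $P_\beta^2=I$ collapses the product to a shorter one, which vanishes by \eqref{P_alpha P_beta P_gamma} or by $\langle P_\alpha P_\beta x,x\rangle=\delta_{\alpha\beta}$. The order-1 terms such as $(m+1)\trace h^\alpha$ vanish by minimality. The terms involving $G$ reduce by item (2) to $\sum g_{\alpha\beta\gamma}g_{\ldots}$, which also end up in the quartic/quintic form.

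The main obstacle is the bookkeeping of signs and multiplicities. We must check that the two orderings differ exactly by the factor $-2$. My plan for this is to reorder $P_\beta P_\gamma P_\delta P_\varepsilon$ into canonical increasing order and use full antisymmetry of $\langle P_{\beta}P_\gamma P_\delta P_\varepsilon x,x\rangle$ in distinct indices, so that each sum becomes a multiple of the single quantity $S_\alpha=\sum \langle P_{\beta}P_\gamma P_\delta P_\varepsilon x,x\rangle\langle P_\alpha P_\beta P_\gamma P_\delta P_\varepsilon x,x\rangle$. The pattern $h^\beta h^\gamma h^\gamma h^\beta$ versus $h^\beta h^\gamma h^\beta h^\gamma$ differs by one anticommutation, which flips the sign. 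The coefficient $2$ versus $1$ comes from the two ways of pairing the correction terms, and this is what I would verify last.
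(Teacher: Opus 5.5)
Your plan follows the paper's route: expand products of shape operators with Lemma~\ref{h alpha h beta}, contract the correction terms with items (2)--(4) of Lemma~\ref{h alpha g beta gamma}, and discard whatever collapses by minimality, by $\langle P_\alpha P_\beta x,x\rangle=\delta_{\alpha\beta}$, or by \eqref{P_alpha P_beta P_gamma}. The gap is that you stop exactly where the content of the lemma lies. Write $S_\alpha$ for the quadruple sum in the statement. The coefficients $2$ and $-1$ in front of $S_\alpha$ are never derived, and the heuristic you give for them does not hold. The shape operators do not anticommute: for $\beta\neq\gamma$, Lemma~\ref{h alpha h beta} gives
\[h^\beta h^\gamma+h^\gamma h^\beta=-\sum_\delta\bigl(g_{\beta\delta}\otimes g_{\gamma\delta}+g_{\gamma\delta}\otimes g_{\beta\delta}\bigr),\qquad (u\otimes v)_{ij}:=u(e_i)v(e_j).\]
So a swap does not merely flip the sign. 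A pure sign flip would predict $-2S_\alpha$ for the second trace. The anticommutator terms are of the same order as $S_\alpha$, and they are what moves the value to $-S_\alpha$. Likewise, the ``pure Clifford'' pieces are not ordering-independent: they are $(m+1)^2\delta_{ij}$ for $h^\beta h^\gamma h^\gamma h^\beta$ and $(1-m^2)\delta_{ij}$ for $h^\beta h^\gamma h^\beta h^\gamma$. This is harmless only because $\trace h^\alpha=0$.

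The paper pins the numbers down as follows. For the first identity it sums over $\gamma$ first, using $\sum_\gamma(h^\gamma)^2=(m+1)I-G$. This gives $\sum_{\beta,\gamma}h^\beta h^\gamma h^\gamma h^\beta=(m+1)^2I-(m+1)G-\sum_\beta h^\beta Gh^\beta$, and by item (2), $\sum_\beta h^\beta Gh^\beta=\sum g_{\beta\gamma\delta}\otimes g_{\beta\gamma\delta}$. After tracing against $h^\alpha$, the first two terms vanish, since $\trace(h^\alpha G)=0$ by items (2) and (4). Items (3) and (4) then give $\sum_{i,k}h^\alpha_{ik}\sum g_{\beta\gamma\delta}(e_i)g_{\beta\gamma\delta}(e_k)=-2S_\alpha$, so the first trace is $+2S_\alpha$. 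The factor $2$ arises because the two terms on the right of item (3) contribute $-S_\alpha$ each, not from a pairing count.

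For the second identity the paper expands $(h^\beta h^\gamma)^2$ into four terms and obtains, with sums over pairwise distinct indices,
\[\sum_{\beta,\gamma}h^\beta h^\gamma h^\beta h^\gamma=(1-m^2)I+2(m-1)G+\sum g_{\beta\gamma\delta}\otimes g_{\beta\gamma\delta}-\sum g_{\beta\delta}\otimes g_{\gamma\varepsilon}\,\langle P_\beta P_\gamma P_\delta P_\varepsilon x,x\rangle.\]
The last term has no counterpart in the first identity. It comes from the product of two correction terms. It needs the contraction $\sum_l g_{\gamma\delta}(e_l)g_{\beta\varepsilon}(e_l)=\langle P_\delta P_\gamma P_\beta P_\varepsilon x,x\rangle$, which holds because $P_\beta P_\varepsilon x$ is tangent for $\beta\neq\varepsilon$; this is not item (4). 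After tracing against $h^\alpha$, the $g_{\beta\gamma\delta}$-term now contributes $-2S_\alpha$, the opposite of its contribution in the first identity. The new term contributes $+S_\alpha$ via items (2) and (4), for a total of $-S_\alpha$. In the paper's traced display this last term is printed with a $+$ sign; it must keep the $-$ sign from the matrix identity, since otherwise one gets $-3S_\alpha$. Without this bookkeeping, your argument shows at best that both traces are some multiples of $S_\alpha$, not the stated ones.
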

\begin{proof}
Combining Lemma \ref{h alpha g beta gamma} (2), we obtain
\begin{align*}
(h^{\beta}G)_{ij}&=-\sum\limits_{\gamma}\sum\limits_{\delta\neq\gamma}g_{\beta\gamma\delta}(e_{i})g_{\gamma\delta}(e_{j}),\\
\sum\limits_{\beta}(h^{\beta}Gh^{\beta})_{ij}&=\sum\limits_{\beta}\sum\limits_{\gamma\neq\beta}\sum\limits_{\delta\neq\beta,\gamma}g_{\beta\gamma\delta}(e_{i})g_{\beta\gamma\delta}(e_{j}).
\end{align*}

Using Lemma \ref{h alpha h beta}, this gives
\begin{align*}
\sum_{\beta,\gamma}(h^{\beta}h^{\gamma}h^{\gamma}h^{\beta})_{ij}&=\sum_{\beta}\sum_{k,l}\left((m+1)h^{\beta}_{ik}\delta_{kl}h^{\beta}_{lj}-h^{\beta}_{ik}G_{kl}h^{\beta}_{lj}\right)  \\
&=(m+1)^{2}\delta_{ij}-(m+1)G_{ij}-\sum\limits_{\beta}\sum\limits_{\gamma\neq\beta}\sum\limits_{\delta\neq\beta,\gamma}g_{\beta\gamma\delta}(e_{i})g_{\beta\gamma\delta}(e_{j}).
\end{align*}

Taking the trace with $h^\alpha$, we have
\begin{equation}\label{h^alpha h^beta h^gamma h^gamma h^beta}
\begin{aligned}
\trace\sum_{\beta,\gamma}(h^{\alpha}h^{\beta}h^{\gamma}h^{\gamma}h^{\beta})
&=(m+1)^{2}\sum_{i}h^{\alpha}_{ii}-(m+1)\sum_{i}(h^{\alpha}G)_{ii} \\
&\quad -\sum_{\beta}\sum_{\gamma\neq\beta}\sum_{\delta\neq\beta,\gamma}\sum_{i,k}h^{\alpha}_{ik}g_{\beta\gamma\delta}(e_{k})g_{\beta\gamma\delta}(e_{i}).
\end{aligned}
\end{equation}

Since $M_{+}$ is minimal, $\sum\limits_{i}h^{\alpha}_{ii}=0$. Furthermore, by Lemma \ref{h alpha g beta gamma} (2) and (4), the second term vanishes:
\begin{equation}\label{trace h alpha G}
\begin{aligned}
 \sum_{i}(h^{\alpha}G)_{ii} &=-\sum_{\gamma}\sum_{\delta\neq\gamma}\sum_{i}g_{\alpha\gamma\delta}(e_{i})g_{\gamma\delta}(e_{i}) \\
&=\sum_{\gamma}\sum_{\delta\neq\gamma}\langle P_{\gamma}P_{\delta}P_{\gamma}P_{\delta}P_{\alpha}x, x\rangle \\
&=-\sum_{\gamma}\sum_{\delta\neq\gamma}\langle P_{\alpha}x, x\rangle = 0.
\end{aligned}
\end{equation}

Combining Lemma \ref{h alpha g beta gamma} (3), (4) with   \eqref{P_alpha P_beta P_gamma}, we have
\begin{equation}\label{sum h alpha g 3 g 3}
\begin{aligned}
&\sum\limits_{\beta}\sum\limits_{\gamma\neq\beta}\sum\limits_{\delta\neq\beta,\gamma}\sum\limits_{i,k}h^{\alpha}_{ik}g_{\beta\gamma\delta}(e_{i})g_{\beta\gamma\delta}(e_{k})\\
=&-2\sum\limits_{\beta\neq\alpha}\sum\limits_{\gamma\neq\alpha,\beta}\sum\limits_{\delta\neq\alpha,\beta,\gamma}\sum\limits_{\varepsilon\neq\alpha,\beta,\gamma,\delta}\langle P_{\beta}P_{\gamma}P_{\delta}P_{\varepsilon}x, x\rangle\langle P_{\alpha}P_{\beta}P_{\gamma}P_{\delta}P_{\varepsilon}x, x\rangle. 
\end{aligned} 
\end{equation}

Substituting   \eqref{trace h alpha G} and   \eqref{sum h alpha g 3 g 3} back into   \eqref{h^alpha h^beta h^gamma h^gamma h^beta} yields the first identity:
\begin{align*}
\trace\sum_{\beta,\gamma}(h^{\alpha}h^{\beta}h^{\gamma}h^{\gamma}h^{\beta})&=2\sum\limits_{\beta\neq\alpha}\sum\limits_{\gamma\neq\alpha,\beta}\sum\limits_{\delta\neq\alpha,\beta,\gamma}\sum\limits_{\varepsilon\neq\alpha,\beta,\gamma,\delta}\langle P_{\beta}P_{\gamma}P_{\delta}P_{\varepsilon}x, x\rangle\langle P_{\alpha}P_{\beta}P_{\gamma}P_{\delta}P_{\varepsilon}x, x\rangle.
\end{align*}

Next, applying Lemma \ref{h alpha h beta}, we obtain
\begin{align*}
\sum_{\beta, \gamma} (h^\beta h^\gamma h^\beta h^\gamma)_{kj} &= \sum_{\beta, \gamma}\sum_{l} \langle P_\beta e_k, P_\gamma e_l \rangle \langle P_\beta e_l, P_\gamma e_j \rangle \\
&\quad - \sum_{\beta, \gamma} \sum_{\varepsilon \neq \beta, \gamma}\sum_{l} \langle P_\beta e_k, P_\gamma e_l \rangle g_{\beta \varepsilon}(e_l) g_{\gamma \varepsilon}(e_j) \\
&\quad - \sum_{\beta, \gamma} \sum_{\delta \neq \beta, \gamma}\sum_{l} \langle P_\beta e_l, P_\gamma e_j \rangle g_{\beta \delta}(e_k) g_{\gamma \delta}(e_l) \\
&\quad + \sum_{\beta, \gamma} \sum_{\delta \neq \beta, \gamma} \sum_{\varepsilon \neq \beta, \gamma}\sum_{l} g_{\beta \delta}(e_k) g_{\gamma \delta}(e_l) g_{\beta \varepsilon}(e_l) g_{\gamma \varepsilon}(e_j).
\end{align*}

Combining Lemma \ref{h alpha g beta gamma}, we have
\begin{align*}
\sum_{\beta, \gamma} (h^\beta h^\gamma h^\beta h^\gamma)_{kj}
&= (1-m^2)\delta_{kj} + 2(m-1) G_{kj} + \sum_{\beta} \sum_{\gamma \neq \beta} \sum_{\delta \neq \beta, \gamma} g_{\beta\gamma \delta}(e_k) g_{\beta\gamma\delta}(e_j) \\
&\quad - \sum_{\beta} \sum_{\gamma \neq \beta} \sum_{\delta \neq \beta, \gamma} \sum_{\varepsilon \neq \beta, \gamma, \delta} g_{\beta \delta}(e_k) g_{\gamma \varepsilon}(e_j) \langle P_\beta P_\gamma P_\delta P_\varepsilon x, x \rangle.
\end{align*}

Taking the trace with $h^\alpha$, we obtain
\begin{equation*}
\begin{aligned}
\trace\sum_{\beta,\gamma}(h^{\alpha}h^{\beta}h^{\gamma}h^{\beta}h^{\gamma})
&=(1-m^{2})\sum_{i}h^{\alpha}_{ii}+2(m-1)\sum_{i}(h^{\alpha}G)_{ii} \\
&\quad +\sum_{\beta}\sum_{\gamma\neq\beta}\sum_{\delta\neq\beta,\gamma}\sum_{i,k}h^{\alpha}_{ik}g_{\beta\gamma\delta}(e_{k})g_{\beta\gamma\delta}(e_{i}) \\
&\quad +\sum_{\beta} \sum_{\gamma \neq \beta} \sum_{\delta \neq \beta, \gamma} \sum_{\varepsilon \neq \beta, \gamma, \delta}\sum_{i,k}h^{\alpha}_{ik}g_{\beta \delta}(e_k) g_{\gamma \varepsilon}(e_i) \langle P_\beta P_\gamma P_\delta P_\varepsilon x, x \rangle.
\end{aligned}
\end{equation*}

Combining this with Lemma \ref{h alpha g beta gamma},   \eqref{trace h alpha G} and   \eqref{sum h alpha g 3 g 3}, further calculation yields
\begin{align*}
\trace\sum_{\beta,\gamma}(h^{\alpha}h^{\beta}h^{\gamma}h^{\beta}h^{\gamma})=-\sum\limits_{\beta\neq\alpha}\sum\limits_{\gamma\neq\alpha,\beta}\sum\limits_{\delta\neq\alpha,\beta,\gamma}\sum\limits_{\varepsilon\neq\alpha,\beta,\gamma,\delta}\langle P_{\beta}P_{\gamma}P_{\delta}P_{\varepsilon}x, x\rangle\langle P_{\alpha}P_{\beta}P_{\gamma}P_{\delta}P_{\varepsilon}x, x\rangle.
\end{align*}
\end{proof}

We recall the relationship between the Levi-Civita connection $\nabla$ on $M_{+}$ and the standard connection $D$ on the Euclidean space. As $M_{+}$ is a submanifold in the sphere and hence in Euclidean space, for any vector fields $e_{i},e_{j}\in\Gamma(M_{+})$,
\begin{equation}\label{D_e_j e_i}
D_{e_{j}}e_{i}=\nabla_{e_{j}}e_{i}-\sum\limits_{\gamma}\langle P_{\gamma}e_{i},e_{j} \rangle P_{\gamma}x-\langle e_{i}, e_{j} \rangle x.
\end{equation}

Furthermore, if we choose a local orthogonal basis $\{e_{1},\dots,e_{n}\}$ such that $\nabla_{e_{i}}e_{j}|_{x}=0$, then at $x$,
\begin{equation}\label{D_e_j(P_alphae_i)}
D_{e_{j}}(P_{\alpha}e_{i})=P_{\alpha}(D_{e_{j}}e_{i})=-\delta_{ij}P_{\alpha}x-\sum\limits_{\gamma}\langle P_{\gamma}e_{i}, e_{j} \rangle P_{\alpha}P_{\gamma}x.
\end{equation}
It then follows that
\begin{equation}\label{e_j(g_alpha beta)}
\begin{aligned}
e_{j}(g_{\alpha\beta}(e_{i}))&=e_{j}\langle P_{\alpha}P_{\beta}x, e_{i} \rangle \\
&=\langle P_{\alpha}P_{\beta}(D_{e_{j}}x), e_{i} \rangle-\delta_{ij}\delta_{\alpha\beta}-\sum_{\gamma}\langle P_{\gamma}e_{i}, e_{j} \rangle\langle P_{\alpha}P_{\beta}x, P_{\gamma}x \rangle \\
&=\langle P_{\alpha}P_{\beta}e_{j}, e_{i} \rangle
-\delta_{ij}\delta_{\alpha\beta}.
\end{aligned}
\end{equation}

\begin{lem}\label{h lie brac ijj}
For the focal submanifold $M_{+}$, let $\alpha \neq \beta$. The second fundamental form satisfies:
\begin{equation*}
 \sum_{j}\nabla_{j}[h^{\alpha}, h^{\beta}]_{ij}=-2(n-2m+1)g_{\alpha\beta}(e_{i})+2\sum_{\gamma\neq\alpha, \beta}\sum_{\delta\neq\alpha, \beta, \gamma}g_{\gamma\delta}(e_{i})\langle P_{\delta}P_{\beta}P_{\alpha}P_{\gamma}x, x \rangle.
\end{equation*}
\end{lem}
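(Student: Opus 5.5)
We compute $\sum_j\nabla_j[h^\alpha,h^\beta]_{ij}$ at a fixed point $x\in M_+$, using a local orthonormal frame with $\nabla_{e_i}e_j|_x=0$. At $x$ the covariant derivative is then just the directional derivative of the components, so the quantity to compute is
\[
\sum_j e_j\bigl((h^\alpha h^\beta)_{ij}-(h^\beta h^\alpha)_{ij}\bigr).
\]
For the products we use Lemma \ref{h alpha h beta}:
\[
(h^\alpha h^\beta)_{ij}=\langle P_\alpha e_i,P_\beta e_j\rangle-\sum_{\gamma\neq\alpha,\beta}g_{\alpha\gamma}(e_i)g_{\beta\gamma}(e_j).
\]
Since $\langle P_\beta e_i,P_\alpha e_j\rangle=\langle P_\alpha P_\beta e_i,e_j\rangle$, the first term of the commutator is $\langle(P_\alpha P_\beta-P_\beta P_\alpha)e_i,e_j\rangle=2\langle P_\alpha P_\beta e_i,e_j\rangle$ (as $\alpha\neq\beta$). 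The quadratic terms give
\[
-\sum_{\gamma\neq\alpha,\beta}\bigl(g_{\alpha\gamma}(e_i)g_{\beta\gamma}(e_j)-g_{\beta\gamma}(e_i)g_{\alpha\gamma}(e_j)\bigr).
\]

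Next we differentiate along $e_j$ and sum over $j$, taking the linear term first. Using \eqref{D_e_j(P_alphae_i)}, we get
\[
\sum_j e_j\langle P_\alpha P_\beta e_i,e_j\rangle=\sum_j\bigl(\langle P_\alpha P_\beta D_{e_j}e_i,e_j\rangle+\langle P_\alpha P_\beta e_i,D_{e_j}e_j\rangle\bigr).
\]
Substituting \eqref{D_e_j e_i} into each piece:
\begin{itemize}
\item The $\delta_{ij}x$ and $P_\gamma x$ parts of $D_{e_j}e_i$ produce $-\langle P_\alpha P_\beta x,e_i\rangle=-g_{\alpha\beta}(e_i)$, together with terms $-\sum_{j,\gamma}\langle P_\gamma e_i,e_j\rangle\langle P_\alpha P_\beta P_\gamma x,e_j\rangle$. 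Summing the latter over $j$ gives tangential projections, which can be evaluated with Lemma \ref{h alpha g beta gamma}(2) and \eqref{Calculation of normal components}.
\item The $D_{e_j}e_j$ part gives $-n\langle P_\alpha P_\beta e_i,x\rangle$, which is a multiple of $g_{\alpha\beta}(e_i)$ by antisymmetry, plus terms with $\sum_j\langle P_\gamma e_j,e_j\rangle=0$ by minimality.
\end{itemize}
For the $\gamma$-sum, $\sum_\gamma\langle P_\gamma P_\alpha P_\beta P_\gamma x,e_i\rangle$ is computed with the Clifford relations: the terms $\gamma=\alpha,\beta$ give one sign, the remaining $m-1$ terms give the other. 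This sum, together with the correction from normal projection (involving $\langle P_\alpha P_\beta P_\gamma x,P_\delta x\rangle$, i.e. four-fold products), should produce the coefficient $n-2m+1$ times $g_{\alpha\beta}(e_i)$, plus four-index terms.

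For the quadratic terms we apply \eqref{e_j(g_alpha beta)}, namely $e_j(g_{\alpha\gamma}(e_i))=\langle P_\alpha P_\gamma e_j,e_i\rangle$ for $\alpha\neq\gamma$, and sum over $j$. This gives two kinds of contraction. The first, $\sum_j\langle P_\alpha P_\gamma e_j,e_i\rangle g_{\beta\gamma}(e_j)$, is a tangential projection of $P_\gamma P_\alpha e_i$ against $P_\beta P_\gamma x$. Expanding it by \eqref{Vector decomposition}–\eqref{Calculation of normal components} gives a term $\langle P_\gamma P_\alpha e_i,P_\beta P_\gamma x\rangle$, which reduces to $\pm g_{\alpha\beta}(e_i)$ and contributes to the coefficient, plus normal corrections $\sum_\delta g_{\alpha\delta}$-type terms weighted by $\langle P_\delta P_\beta P_\gamma\cdots x,x\rangle$. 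The second kind is $\sum_j\langle P_\beta P_\gamma e_j,e_j\rangle g_{\alpha\gamma}(e_i)$, a trace term. It vanishes because $\operatorname{tr}(P_\beta P_\gamma|_{T})=\operatorname{tr}(P_\beta P_\gamma)-(\text{normal part})$, with both pieces zero for $\beta\neq\gamma$ by \eqref{P_alpha P_beta P_gamma}. After relabeling, the four-index corrections collect into $2\sum_{\gamma}\sum_{\delta}g_{\gamma\delta}(e_i)\langle P_\delta P_\beta P_\alpha P_\gamma x,x\rangle$. Here we use $\langle P_aP_bP_cP_dx,x\rangle$, which is antisymmetric in distinct indices, and the fact that degenerate index cases reduce to $g_{\alpha\beta}$ contributions.

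The main obstacle is the bookkeeping of the coefficient of $g_{\alpha\beta}(e_i)$. Contributions come from three places: the $x$-direction ($-1$, and $-n$ from $D_{e_j}e_j$), the Clifford sum over $\gamma$ (roughly $m-1$ versus $2$), and the quadratic terms (about $m-1$ from $\gamma\neq\alpha,\beta$). All of these must combine to $-2(n-2m+1)$. To control this we will separate every four-fold product $\langle P_aP_bP_cP_dx,\cdot\rangle$ into the cases where two indices coincide, which collapse to $g$-terms, and the cases with all indices distinct, which survive in the final sum. We will also check signs against the antisymmetry in Lemma \ref{h alpha g beta gamma}(1).
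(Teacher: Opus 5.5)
\textbf{Verdict: same route as the paper, but the proposal has a genuine gap.} You follow the paper's route exactly. You expand the commutator with Lemma~\ref{h alpha h beta}, differentiate at $x$ in a frame with $\nabla_{e_i}e_j|_x=0$ using \eqref{D_e_j(P_alphae_i)} and \eqref{e_j(g_alpha beta)}, evaluate tangential projections, and split off degenerate index cases. But the proposal stops where the content of the lemma begins. The coefficient $-2(n-2m+1)$ and the factor $2$ on the four-index sum are only asserted (``should produce'', ``must combine''). Moreover, the one sign you do commit to is wrong. Since $\langle P_\alpha e_i,P_\beta e_j\rangle=\langle P_\beta P_\alpha e_i,e_j\rangle$, the linear part of $[h^\alpha,h^\beta]_{ij}$ is $\langle (P_\beta P_\alpha-P_\alpha P_\beta)e_i,e_j\rangle=-2\langle P_\alpha P_\beta e_i,e_j\rangle$, not $+2\langle P_\alpha P_\beta e_i,e_j\rangle$. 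Carried through, this flips the sign of the whole linear contribution, both its $g_{\alpha\beta}$ part and its four-index sum, so the stated formula cannot come out. Your tally of sources is also off in three ways:
\begin{enumerate}
\item The two $x$-contributions enter with opposite signs as multiples of $g_{\alpha\beta}(e_i)$. In $\sum_j e_j\langle P_\alpha P_\beta e_i,e_j\rangle$ they are $-1$ and $+n$, not $-1$ and $-n$.
\item The quadratic terms produce no four-index terms.
\item The degenerate four-index terms, a fourth source of the coefficient, are missing from your list.
\end{enumerate}

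\textbf{The correct accounting.} Put $\Sigma=\sum_\gamma\sum_{\delta\neq\gamma}g_{\gamma\delta}(e_i)\langle P_\delta P_\beta P_\alpha P_\gamma x,x\rangle$.
\begin{enumerate}
\item Linear part. Using $\sum_\gamma P_\gamma P_\beta P_\alpha P_\gamma=(m-3)P_\beta P_\alpha$, it is $2\sum_j e_j\langle P_\alpha e_i,P_\beta e_j\rangle=-2(n-m+2)g_{\alpha\beta}(e_i)+2\Sigma$.
\item Quadratic part. For $\gamma\neq\alpha,\beta$ the vector $P_\beta P_\gamma x$ is tangent, since its products with $x$ and with every $P_\delta x$ vanish by anticommutativity and \eqref{P_alpha P_beta P_gamma}. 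Hence $\sum_j\langle P_\alpha P_\gamma e_j,e_i\rangle g_{\beta\gamma}(e_j)=\langle P_\gamma P_\alpha e_i,P_\beta P_\gamma x\rangle=-g_{\alpha\beta}(e_i)$, with no normal correction. The trace term vanishes because $P_\beta P_\gamma$ is skew. So the quadratic part contributes exactly $+2(m-1)g_{\alpha\beta}(e_i)$.
\item Degenerate terms in $\Sigma$. The pairs $(\gamma,\delta)=(\alpha,\beta)$ and $(\beta,\alpha)$ each give $g_{\alpha\beta}(e_i)$, and every other pair meeting $\{\alpha,\beta\}$ gives $0$. Thus $\Sigma=2g_{\alpha\beta}(e_i)+\Sigma'$, with $\Sigma'$ the sum in the statement.
\end{enumerate}
The total is $\bigl(-2(n-m+2)+2(m-1)+4\bigr)g_{\alpha\beta}(e_i)+2\Sigma'$, as claimed.

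\textbf{A caveat that applies equally to the paper's proof.} Since $\nabla^{\bot}_{e_k}(P_\alpha x)=\sum_\gamma g_{\alpha\gamma}(e_k)P_\gamma x$ is not zero, your sentence that at $x$ the covariant derivative is the directional derivative of the components holds only for the tangent indices. What both arguments actually compute is $\sum_j e_j\bigl([h^\alpha,h^\beta]_{ij}\bigr)$ in the frame $\{P_\alpha x\}$. The full covariant divergence, as in \eqref{Yang-Mills equation for normal}, carries the extra term $-\sum_{j,\gamma}\bigl(g_{\alpha\gamma}(e_j)[h^\gamma,h^\beta]_{ij}+g_{\beta\gamma}(e_j)[h^\alpha,h^\gamma]_{ij}\bigr)$. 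This term is nonzero in general for $m\geq 2$; for $m=2$ it equals $2g_{\alpha\beta}(e_i)$.
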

\begin{proof}
Using Lemma \ref{h alpha h beta}, we have
\begin{equation*}\begin{aligned}
\sum\limits_{j}\nabla_{j}[h^{\alpha}, h^{\beta}]_{ij}&=2\sum\limits_{j}e_{j}\langle P_{\alpha}e_{i}, P_{\beta}e_{j} \rangle-\sum\limits_{j}\sum\limits_{\gamma\neq\alpha,\beta}e_{j}(g_{\alpha\gamma}(e_{i})g_{\beta\gamma}(e_{j})) \\
&\quad +\sum\limits_{j}\sum\limits_{\gamma\neq\alpha,\beta}e_{j}(g_{\beta\gamma}(e_{i})g_{\alpha\gamma}(e_{j})).
\end{aligned}\end{equation*}

By   \eqref{D_e_j(P_alphae_i)} and the condition $\nabla_{e_{i}}e_{j}|_{x}=0$, the first term becomes
\begin{align*}
\sum\limits_{j}e_{j}\langle P_{\alpha}e_{i}, P_{\beta}e_{j} \rangle&=\sum\limits_{j}\langle D_{e_{j}}(P_{\alpha}e_{i}), P_{\beta}e_{j} \rangle+\sum\limits_{j}\langle P_{\alpha}e_{i}, D_{e_{j}}(P_{\beta}e_{j}) \rangle \\
&=-\sum\limits_{j}\langle \delta_{ij}P_{\alpha}x+\sum\limits_{\gamma}\langle P_{\gamma}e_{i}, e_{j} \rangle P_{\alpha}P_{\gamma}x, P_{\beta}e_{j} \rangle \\
&\quad \ -\sum\limits_{j}\langle \delta_{jj}P_{\beta}x+\sum\limits_{\gamma}\langle P_{\gamma}e_{j}, e_{j} \rangle P_{\beta}P_{\gamma}x, P_{\alpha}e_{i} \rangle \\
&=-(n-1)g_{\alpha\beta}(e_{i})-\sum\limits_{\gamma}\langle (P_{\gamma}e_{i})^{\top},  (P_{\beta}P_{\alpha}P_{\gamma}x)^{\top}\rangle \\
&=-(n-1)g_{\alpha\beta}(e_{i})-\sum\limits_{\gamma}\langle P_{\gamma}e_{i}, P_{\beta}P_{\alpha}P_{\gamma}x \rangle \\
&\quad \ +\sum\limits_{\gamma,\delta}\langle P_{\gamma}e_{i}, P_{\delta}x \rangle\langle P_{\beta}P_{\alpha}P_{\gamma}x, P_{\delta}x \rangle \\
&=-(n-m+2)g_{\alpha\beta}(e_{i})+\sum\limits_{\gamma}\sum\limits_{\delta\neq\gamma}g_{\gamma\delta}(e_{i})\langle P_{\delta}P_{\beta}P_{\alpha}P_{\gamma}x, x \rangle.
\end{align*}

Furthermore, applying   \eqref{e_j(g_alpha beta)} yields
\begin{align*}
\sum\limits_{j}\sum\limits_{\gamma\neq\alpha,\beta}e_{j}(g_{\alpha\gamma}(e_{i})g_{\beta\gamma}(e_{j}))&=\sum\limits_{j}\sum\limits_{\gamma\neq\alpha,\beta}\langle P_{\alpha}P_{\gamma}e_{j}, e_{i} \rangle g_{\beta\gamma}(e_{j})+\sum\limits_{j}\sum\limits_{\gamma\neq\alpha,\beta}g_{\alpha\gamma}(e_{i})\langle P_{\beta}P_{\gamma}e_{j}, e_{j} \rangle \\
&=\sum\limits_{\gamma\neq\alpha,\beta}\langle (P_{\gamma}P_{\alpha}e_{i})^{\top}, (P_{\beta}P_{\gamma}x)^{\top} \rangle  \\
&=\sum\limits_{\gamma\neq\alpha,\beta}\langle P_{\gamma}P_{\alpha}e_{i}, P_{\beta}P_{\gamma}x \rangle-\sum\limits_{\gamma\neq\alpha,\beta}\sum\limits_{\delta}\langle P_{\gamma}P_{\alpha}e_{i}, P_{\delta}x \rangle\langle P_{\beta}P_{\gamma}x, P_{\delta}x \rangle.
\end{align*}

Applying   \eqref{P_alpha P_beta P_gamma}, we note that $\langle P_{\beta}P_{\gamma}x, P_{\delta}x \rangle = 0$ for all $\delta$. Thus, the second sum vanishes and thus
\begin{align*}
\sum\limits_{j}\sum\limits_{\gamma\neq\alpha,\beta}e_{j}(g_{\alpha\gamma}(e_{i})g_{\beta\gamma}(e_{j}))&=\sum\limits_{\gamma\neq\alpha,\beta}\langle e_{i}, P_{\alpha}P_{\gamma}P_{\beta}P_{\gamma}x \rangle \\
&=-(m-1)g_{\alpha\beta}(e_{i}).
\end{align*}

Similarly, interchanging $\alpha$ and $\beta$ in the preceding calculation yields
\begin{equation*}
\sum\limits_{j}\sum\limits_{\gamma\neq\alpha,\beta}e_{j}(g_{\beta\gamma}(e_{i})g_{\alpha\gamma}(e_{j})) = -(m-1)g_{\beta\alpha}(e_{i}) = (m-1)g_{\alpha\beta}(e_{i}).
\end{equation*}

Combining these results, we obtain
\begin{equation*}
\sum\limits_{j}\nabla_{j}[h^{\alpha}, h^{\beta}]{ij}= -2(n-2m+3)g_{\alpha\beta}(e_{i}) + 2\sum\limits_{\gamma}\sum\limits_{\delta\neq\gamma}g_{\gamma\delta}(e_{i})\langle P_{\delta}P_{\beta}P_{\alpha}P_{\gamma}x, x \rangle.
\end{equation*}

Splitting the sum over $\gamma$ and $\delta$, it follows that
\begin{equation*}
\sum_{j}\nabla_{j}[h^{\alpha}, h^{\beta}]_{ij}= -2(n-2m+1)g_{\alpha\beta}(e_{i})+2\sum_{\gamma\neq\alpha, \beta}\sum_{\delta\neq\alpha, \beta, \gamma}g_{\gamma\delta}(e_{i})\langle P_{\delta}P_{\beta}P_{\alpha}P_{\gamma}x, x \rangle,
\end{equation*}
which completes the proof.
\end{proof}

\subsection{Proof of Theorem \ref{exmp-3}}
\begin{proof}
(1) For any distinct indices $\alpha$ and $\beta$, the vector $P_{\alpha}P_{\beta}x$ is tangent to $M_{+}$ at $x$. Without loss of generality, we choose a local orthonormal frame $\{e_i\}$ such that $e_{1}=P_{\alpha}P_{\beta}x$, as the other cases follow similarly. Applying Lemma \ref{h lie brac ijj} to $e_1$, we obtain
\begin{equation*}
\sum_{j}\nabla_{j}[h^{\alpha}, h^{\beta}]_{1j}=-2(n-2m+1)-2\sum_{\gamma\neq\alpha, \beta}\sum_{\delta\neq\alpha, \beta, \gamma}\langle P_{\delta}P_{\beta}P_{\alpha}P_{\gamma}x, x \rangle^{2}.
\end{equation*}

For the isoparametric focal submanifolds $M_{+}$ of the OT-FKM type with multiplicity pair $(m_{1},m_{2})$, the dimension is $n=m_{1}+2m_{2}$, where $m=m_{1}$. According to the table of possible multiplicity pairs $(m_1, m_2)$, it follows that, with the exception of $(5,2)$ and $(6,1)$, we have
\begin{equation*}
n-2m+1=2m_{2}-m_{1}+1>0,
\end{equation*}

Consequently, this yields $\sum\limits_{j}\nabla_{j}[h^{\alpha}, h^{\beta}]_{1j}<0$. Thus, except for the pairs $(5,2)$ and $(6,1)$, the normal curvature of $M_{+}$ fails to satisfy the classical Yang-Mills equation (\ref{Yang-Mills equation for normal}).

(2) Since $M_{+}$ is a minimal submanifold of the sphere, Theorem \ref{thm-1} implies that $M_{+}$ is Normal-Yang-Mills if and only if
\begin{equation*}
\forall\alpha,\ \sum\limits_{\beta,i,j,k}\nabla_{i}\left( h^\beta_{ik}\nabla_{j}[h^\alpha, h^\beta]_{kj}\right)+\sum\limits_{\beta,\gamma}{\trace}(h^{\alpha}h^{\beta}h^{\gamma}h^{\beta}h^{\gamma}-h^{\alpha}h^{\beta}h^{\gamma}h^{\gamma}h^{\beta})=0.  
\end{equation*}

Utilizing the minimality of $M_{+}$, we have
\begin{align*}
\sum\limits_{\beta,i,j,k}\nabla_{i}\left( h^\beta_{ik}\nabla_{j}[h^\alpha, h^\beta]_{kj}\right)&=\sum\limits_{\beta,i,j,k}(\nabla_{k}h^{\beta}_{ii})\nabla_{j}[h^{\alpha}, h^{\beta}]_{kj}+\sum\limits_{\beta,i,j,k}h^{\beta}_{ik}\nabla_{i}(\nabla_{j}[h^{\alpha}, h^{\beta}]_{kj}) \\
&=\sum\limits_{\beta,i,j,k}h^{\beta}_{ik}\nabla_{k}(\nabla_{j}[h^{\alpha}, h^{\beta}]_{ij}).
\end{align*}

Combining Lemma \ref{h lie brac ijj} and   \eqref{e_j(g_alpha beta)}, we compute
\begin{align*}
&\sum_{j}\nabla_{k}(\nabla_{j}[h^{\alpha}, h^{\beta}]_{ij})\\
=& -2(n-2m+1)e_k\left(g_{\alpha\beta}(e_{i})\right)+2\sum_{\gamma\neq\alpha, \beta}\sum_{\delta\neq\alpha, \beta, \gamma}e_k\big( g_{\gamma\delta}(e_{i})\langle P_{\delta}P_{\beta}P_{\alpha}P_{\gamma}x, x \rangle\big) \\
=& -2(n-2m+1)(\langle P_{\alpha}P_{\beta}e_{k}, e_{i} \rangle-\delta_{ik}\delta_{\alpha\beta})+2\sum_{\gamma\neq\alpha, \beta}\sum_{\delta\neq\alpha, \beta, \gamma}\langle P_{\gamma}P_{\delta}e_{k}, e_{i} \rangle\langle P_{\alpha}P_{\beta}P_{\gamma}P_{\delta}x, x \rangle \\
&-2\sum_{\gamma\neq\alpha, \beta}\sum_{\delta\neq\alpha, \beta, \gamma}\delta_{ik}\delta_{\gamma\delta} \langle P_{\alpha}P_{\beta}P_{\gamma}P_{\delta}x, x \rangle +4\sum_{\gamma\neq\alpha, \beta}\sum_{\delta\neq\alpha, \beta, \gamma}g_{\gamma\delta}(e_{i})\langle P_{\alpha}P_{\beta}P_{\gamma}P_{\delta}x, e_{k} \rangle.
\end{align*}

Since $h_{ik}^{\beta}$ is symmetric in $i$ and $k$, whereas $\langle P_{\alpha}P_{\beta}e_{k}, e_{i} \rangle$ is skew-symmetric, summing their product over $i$ and $k$ yields zero. By minimality and Lemma \ref{h alpha g beta gamma} we have
\begin{equation*}
\begin{aligned}
&\sum\limits_{\beta,i,j,k}h^{\beta}_{ik}\nabla_{k}(\nabla_{j}[h^{\alpha}, h^{\beta}]_{ij})\\
=& 4\sum\limits_{\beta\neq\alpha}\sum\limits_{\gamma\neq\alpha, \beta}\sum\limits_{\delta\neq\alpha, \beta, \gamma}\sum\limits_{i,k}h^{\beta}_{ik} g_{\gamma\delta}(e_i)\langle P_{\alpha}P_{\beta}P_{\gamma}P_{\delta}x, e_{k} \rangle \\
=&-4\sum\limits_{\beta\neq\alpha}\sum\limits_{\gamma\neq\alpha, \beta}\sum\limits_{\delta\neq\alpha, \beta, \gamma}\sum\limits_{i,k}\langle P_{\beta} P_{\gamma}P_{\delta}x, e_{k} \rangle\langle P_{\alpha}P_{\beta}P_{\gamma}P_{\delta}x, e_{k} \rangle \\
=&-4\sum\limits_{\beta\neq\alpha}\sum\limits_{\gamma\neq\alpha, \beta}\sum\limits_{\delta\neq\alpha, \beta, \gamma}\langle (P_{\beta}P_{\gamma}P_{\delta}x)^{\top}, (P_{\alpha}P_{\beta}P_{\gamma}P_{\delta}x)^{\top} \rangle \\
=&-4\sum\limits_{\beta\neq\alpha}\sum\limits_{\gamma\neq\alpha, \beta}\sum\limits_{\delta\neq\alpha, \beta, \gamma} \left( \langle P_{\beta}P_{\gamma}P_{\delta}x, P_{\alpha}P_{\beta}P_{\gamma}P_{\delta}x \rangle - \sum_{\varepsilon} \langle P_{\beta}P_{\gamma}P_{\delta}x, P_{\varepsilon}x \rangle \langle P_{\alpha}P_{\beta}P_{\gamma}P_{\delta}x, P_{\varepsilon}x \rangle \right).
\end{aligned}
\end{equation*}

By the anti-commutativity of the Clifford system, $\langle P_{\beta}P_{\gamma}P_{\delta}x, P_{\alpha}P_{\beta}P_{\gamma}P_{\delta}x \rangle=0$. Furthermore, in the second summation above, $\langle P_{\beta}P_{\gamma}P_{\delta}x, P_{\varepsilon}x \rangle\langle P_{\alpha}P_{\beta}P_{\gamma}P_{\delta}x, P_{\varepsilon}x \rangle$ can only be non-zero when $\varepsilon \notin \{\alpha, \beta, \gamma, \delta\}$. Thus, the summation restricts to $\varepsilon \neq \alpha, \beta, \gamma, \delta$, yielding
\begin{equation*}
\sum\limits_{\beta,i,j,k}h^{\beta}_{ik}\nabla_{k}(\nabla_{j}[h^{\alpha}, h^{\beta}]_{ij})= -4\sum\limits_{\beta\neq\alpha}\sum\limits_{\gamma\neq\alpha,\beta}\sum\limits_{\delta\neq\alpha,\beta,\gamma}\sum\limits_{\varepsilon\neq\alpha,\beta,\gamma,\delta}\langle P_{\beta}P_{\gamma}P_{\delta}P_{\varepsilon}x, x \rangle\langle P_{\alpha}P_{\beta}P_{\gamma}P_{\delta}P_{\varepsilon}x, x \rangle.
\end{equation*} 

Combining Lemma \ref{trace degree 5}, we obtain that for each $\alpha$,
\begin{multline*}
\sum_{\beta,i,j,k}\nabla_{i}\left( h^\beta_{ik}\nabla_{j}[h^\alpha, h^\beta]_{kj}\right)+\sum\limits_{\beta,\gamma}{\trace}(h^{\alpha}h^{\beta}h^{\gamma}h^{\beta}h^{\gamma}-h^{\alpha}h^{\beta}h^{\gamma}h^{\gamma}h^{\beta})\\ 
=-7\sum\limits_{\beta\neq\alpha}\sum\limits_{\gamma\neq\alpha,\beta}\sum\limits_{\delta\neq\alpha,\beta,\gamma}\sum\limits_{\varepsilon\neq\alpha,\beta,\gamma,\delta}\langle P_{\beta}P_{\gamma}P_{\delta}P_{\varepsilon}x, x \rangle\langle P_{\alpha}P_{\beta}P_{\gamma}P_{\delta}P_{\varepsilon}x, x \rangle.
\end{multline*}

Therefore,  $M_{+}$ is Normal-Yang-Mills if and only if  for each $\alpha$,
\begin{equation}\label{sum distinct normal indices}
\sum\limits_{\beta\neq\alpha}\sum\limits_{\gamma\neq\alpha,\beta}\sum\limits_{\delta\neq\alpha,\beta,\gamma}\sum\limits_{\varepsilon\neq\alpha,\beta,\gamma,\delta}\langle P_{\beta}P_{\gamma}P_{\delta}P_{\varepsilon}x, x \rangle\langle P_{\alpha}P_{\beta}P_{\gamma}P_{\delta}P_{\varepsilon}x, x \rangle=0, \forall x\in M_+.
\end{equation}

We analyze this condition based on the multiplicity $m$:
\begin{itemize}
    \item \textbf{Case $m\leq 3$:} The normal space is at most four dimensions. It is impossible to choose five distinct normal indices. Thus,   \eqref{sum distinct normal indices} vanishes and it follows that $M_+$ is Normal-Yang-Mills.
    
    \item \textbf{Case $m=4$:} The normal space is exactly five dimensions. Without loss of generality, we fix $\alpha=0$. The remaining indices $(\beta, \gamma, \delta, \varepsilon)$ must form a permutation $\sigma \in S_4$ of the set $\{1, 2, 3, 4\}$.
\end{itemize}

Any permutation $\sigma$ produces a sign change $\operatorname{sgn}(\sigma)$ in the $P_{\sigma(1)}P_{\sigma(2)}P_{\sigma(3)}P_{\sigma(4)}$ due to anti-commutativity.   \eqref{sum distinct normal indices}  turns to
\begin{equation}\label{eval distinct normal indices}
\begin{aligned}
\sum\limits_{\sigma \in S_4} &\langle P_{\sigma(1)}P_{\sigma(2)}P_{\sigma(3)}P_{\sigma(4)}x, x \rangle\langle P_{0}P_{\sigma(1)}P_{\sigma(2)}P_{\sigma(3)}P_{\sigma(4)}x, x \rangle \\
&=  24 \langle P_{1}P_{2}P_{3}P_{4}x, x \rangle\langle P_{0}P_{1}P_{2}P_{3}P_{4}x, x \rangle=0, \quad \forall x\in M_+. 
\end{aligned}
\end{equation}

In the definite case, the Clifford system satisfies $P_{0}P_{1}P_{2}P_{3}P_{4}=I_l$. Multiplying both sides by $P_0$ yields $P_{1}P_{2}P_{3}P_{4}=P_{0}$. As a result, $\langle P_{1}P_{2}P_{3}P_{4}x, x \rangle = \langle P_{0}x, x \rangle = 0$, forcing   \eqref{eval distinct normal indices} to vanish. It follows that $M_+$ is Normal-Yang-Mills in the definite classes when $m=4$ and $l=4k$ for any $k\geq2$.

Conversely, in the indefinite case, $Q:=P_0P_1P_2P_3P_4 \neq \pm I_l$ is a symmetric orthogonal matrix commuting with all $P_{\alpha}$ ($\alpha=0,\dots,4$). Moreover, $Q_0:=P_0Q=P_1P_2P_3P_4$ is a symmetric orthogonal matrix commuting with $P_0$ but anti-commuting with $P_{\alpha}$ for $\alpha=1,\dots,4$. Using the representation \eqref{Clifford-sys-alg}, we have $Q=\mathrm{diag}(A,A)$ and $Q_0=\mathrm{diag}(A,-A)$, where $A=E_1E_2E_3\neq \pm I_l$ is a symmetric orthogonal matrix. The focal submanifold $M_+$ in \eqref{def-M+} can then be rewritten as
$$M_+=\{x=(u,v)\in\mathbb{R}^l\oplus\mathbb{R}^l \mid |u|^2=|v|^2=\frac{1}{2}, \langle u, E_{\alpha}v\rangle=0, \alpha=0,1,2,3\}$$
where $E_0:=I_l$. Given any $v\in\mathbb{R}^l$ with $|v|^2=\frac{1}{2}$, let 
$$V:=\mathrm{Span}\{ E_{\alpha}v \mid \alpha=0,1,2,3\},$$
which is a $4$-dimensional linear subspace of $\mathbb{R}^l$.

Now assume $l=8$. Consider the Clifford algebra $\mathcal{Cl}_7$ action on $\mathbb{R}^8$ generated by $\{E_1, \dots, E_7\}$, where $\{ E_1, E_2, E_3 \}$ represents the $\mathcal{Cl}_3$ action on $\mathbb{R}^8$ of the indefinite $(4,3)$ case up to geometric equivalence. Choosing any $v \in \mathbb{R}^8$ with $|v|^2=\frac{1}{2}$, the set $\{E_{\alpha}v \mid \alpha=0, 1, \dots, 7\}$ forms an orthogonal basis of $\mathbb{R}^8$. Any $u \in V^{\bot}$ satisfying $|u|^2=\frac{1}{2}$ can be written as $u = \sum\limits_{\alpha=4}^7 u_{\alpha}E_{\alpha}v$ with $\sum\limits_{\alpha=4}^7 u_{\alpha}^2 = 1$. Since $A = E_1E_2E_3$ anti-commutes with $E_{\alpha}$ for each $\alpha \in \{4, \dots, 7\}$, $E_{\alpha}AE_{\beta}$ is skew-symmetric for $\alpha \neq \beta$ and $E_{\alpha}AE_{\alpha} = A$. Thus,
$$\langle u, Au\rangle = \sum\limits_{\alpha, \beta=4}^7 u_{\alpha}u_{\beta} \langle E_{\alpha}v, AE_{\beta}v \rangle = -\langle v, Av \rangle.$$ It then follows from $Q=\mathrm{diag}(A,A)$ that
\begin{align*}
    \langle P_0P_1P_2P_3P_4 x, x \rangle=\langle Qx,x\rangle=\langle u, Au \rangle+\langle v, Av \rangle=0.
\end{align*}
Therefore, in the indefinite $(4,3)$ case, \eqref{sum distinct normal indices} and \eqref{eval distinct normal indices} hold, showing that $M_+$ is Normal-Yang-Mills.

When $l>8$, if $M_+$ is Normal-Yang-Mills, then \eqref{eval distinct normal indices} implies
\begin{equation}\label{Cond-indef}
    \langle Qx,x\rangle\langle Q_0x,x\rangle=\langle u, Au\rangle^2-\langle v, Av\rangle^2=0
\end{equation}
where $v\in\mathbb{R}^l$ and $u\in V^{\bot}$ satisfy $|v|^2=|u|^2=\frac{1}{2}$.

Since $A\neq\pm I_l$, there exists $v\in\mathbb{R}^l$ with $|v|^2=\frac{1}{2}$ such that $\langle v, Av\rangle=0$. At this specific $v$, \eqref{Cond-indef} holds only if $\langle u, Au\rangle=0$ across all $u\in V^{\bot}$. This means that $V^{\bot}$ is a totally isotropic subspace with respect to the symmetric bilinear form $\langle \cdot, A\cdot\rangle$ on $\mathbb{R}^l$. However, the dimension of a totally isotropic subspace is at most $\frac{l}{2}$ (see \cite[Lemma 6.2]{G-Z}), which contradicts $\mathrm{dim} V^{\bot}=l-4>\frac{l}{2}$ since $l>8$. Thus, \eqref{Cond-indef} cannot hold in general. Consequently, in the indefinite class with $m=4$ and $l>8$, $M_{+}$ is not Normal-Yang-Mills.
\end{proof}

\subsection{Proof of Theorem \ref{exmp-4}}
\begin{proof}
(1) Recall from \eqref{Yang-Mills equation for tangent} that the Riemannian curvature tensor satisfies the classical Yang-Mills equation if and only if its Ricci tensor is Codazzi. According to Tang and Yan \cite{T-Y}, the focal submanifold $M_{+}$ satisfies this condition if and only if $(m_1, m_2) \in \{(2,1), (6,1)\}$, or $M_{+}$ is the homogeneous focal submanifold diffeomorphic to $Sp(2)$ with $(m_1, m_2) = (4,3)$.

(2) By the Gauss equation \eqref{Gauss-Codazzi-Ricci}, we have:
\begin{align*}
\Omega_{ijkl}&=(\theta_k\wedge\theta_l+\sum\limits_{\alpha}\theta_{k\alpha}\wedge\theta_{l\alpha})(e_i, e_j) \\
&=\sum\limits_{p,q}(\delta_{kp}\delta_{lq}+\sum\limits_{\alpha}h^{\alpha}_{kp}h^{\alpha}_{lq})\theta_{p}\wedge\theta_{q}(e_{i}, e_{j})\\  
&=\delta_{ki}\delta_{lj}-\delta_{kj}\delta_{li}+\sum\limits_{\alpha}(h^{\alpha}_{ki}h^{\alpha}_{lj}-h^{\alpha}_{kj}h^{\alpha}_{li}).
\end{align*}

Using Lemma \ref{h alpha h beta}, it follows that:
\begin{align*}
\Ric_{ik}&=\sum\limits_{j}\left(\delta_{ki}\delta_{jj}-\delta_{kj}\delta_{ji}+\sum\limits_{\alpha}(h^{\alpha}_{ki}h^{\alpha}_{jj}-h^{\alpha}_{kj}h^{\alpha}_{ji})\right) \\
&=(n-m-2)\delta_{ik}+\sum\limits_{\alpha}\sum\limits_{\beta\neq\alpha}g_{\alpha\beta}(e_{i})g_{\alpha\beta}(e_{k}).
\end{align*}

By Theorem \ref{thm-2}, $M_+$ is Tangent-Yang-Mills if and only if for any $\alpha$,
\begin{multline}\label{tangent YM variational}
\sum_{i,j,k}h^{\alpha}_{ij}\nabla_{k}(\nabla_{j}\Ric_{ik}-\nabla_{k}\Ric_{ij})-2\sum_{\beta}\mathrm{Tr}(h^{\alpha}h^{\beta}h^{\beta})\\
+\sum_{\beta,\gamma}\mathrm{Tr}(h^{\alpha}h^{\beta}h^{\gamma})\cdot\mathrm{Tr}(h^{\beta}h^{\gamma})-\sum_{\beta,\gamma}\mathrm{Tr}(h^{\alpha}h^{\beta}h^{\gamma}h^{\beta}h^{\gamma})=0. 
\end{multline}

Fix a point $x \in M_{+}$, and choose a local orthonormal frame $\{e_i\}$ such that $\nabla_{e_{i}}e_{j}|_{x}=0$. Utilizing \eqref{e_j(g_alpha beta)}, we commute the covariant derivative of the Ricci tensor at $x$ as follows:
\begin{align*}
\nabla_{j}\Ric_{ik}-\nabla_{k}\Ric_{ij} &= 2\sum_{\alpha}\sum_{\beta\neq\alpha}\langle P_{\alpha}P_{\beta}e_{j}, e_{k} \rangle g_{\alpha\beta}(e_{i}) \\
&\quad + \sum_{\alpha}\sum_{\beta\neq\alpha}\langle P_{\alpha}P_{\beta}e_{j}, e_{i} \rangle g_{\alpha\beta}(e_{k}) \\
&\quad - \sum_{\alpha}\sum_{\beta\neq\alpha}\langle P_{\alpha}P_{\beta}e_{k}, e_{i} \rangle g_{\alpha\beta}(e_{j}) \\
&=: T_{1} + T_{2} + T_{3}.
\end{align*}

By the Leibniz rule and   \eqref{e_j(g_alpha beta)}, we have
\begin{equation*}
\sum_{k}e_{k}(T_{1}) = 2\sum_{\alpha}\sum_{\beta\neq\alpha} \left( \sum_{k}e_{k}\left(\langle P_{\alpha}P_{\beta}e_{j}, e_{k} \rangle \right) g_{\alpha\beta}(e_{i}) + \sum_{k}\langle P_{\alpha}P_{\beta}e_{j}, e_{k} \rangle \langle P_{\alpha}P_{\beta}e_{k}, e_{i} \rangle \right).
\end{equation*}

Utilizing the minimality of $M_{+}$ and   \eqref{D_e_j e_i}, a direct calculation yields
\begin{equation*}
\sum_{k}e_{k}\langle P_{\alpha}P_{\beta}e_{j}, e_{k} \rangle = (n-1)g_{\alpha\beta}(e_{j}) + \sum_{\gamma}\sum_{k}h_{kj}^{\gamma}g_{\alpha\beta\gamma}(e_{k}).
\end{equation*}

Applying Lemma \ref{h alpha g beta gamma} (3), we obtain
\begin{equation*}
\sum_{\gamma}\sum_{k}h_{kj}^{\gamma}g_{\alpha\beta\gamma}(e_{k}) = -(m-1)g_{\alpha\beta}(e_{j}) + \sum_{\gamma\neq\alpha,\beta}\sum_{\delta\neq\alpha,\beta,\gamma}g_{\gamma\delta}(e_{j})\langle P_{\delta}P_{\alpha}P_{\beta}P_{\gamma}x, x \rangle.
\end{equation*}

Thus,
\begin{equation*}
\sum_{k}e_{k}\langle P_{\alpha}P_{\beta}e_{j}, e_{k} \rangle = (n-m)g_{\alpha\beta}(e_{j}) + \sum_{\gamma\neq\alpha,\beta}\sum_{\delta\neq\alpha,\beta,\gamma}g_{\gamma\delta}(e_{j})\langle P_{\delta}P_{\alpha}P_{\beta}P_{\gamma}x, x \rangle.
\end{equation*}

Using   \eqref{Vector decomposition}, \eqref{Calculation of tangent components} and   \eqref{Calculation of normal components},  we have
\begin{equation*}
\sum_{k}\langle P_{\alpha}P_{\beta}e_{j}, e_{k} \rangle \langle P_{\alpha}P_{\beta}e_{k}, e_{i} \rangle = \langle P_\alpha P_\beta e_j, P_\beta P_\alpha e_i \rangle + g_{\alpha\beta}(e_{i})g_{\alpha\beta}(e_{j}) + \sum_{\gamma}g_{\alpha\beta\gamma}(e_{i})g_{\alpha\beta\gamma}(e_{j}).
\end{equation*}

Summing over $\alpha$ and $\beta\neq\alpha$, it follows that
\begin{equation}\label{div-T1}
\begin{aligned}
\sum_{k}e_{k}(T_{1}) &= -2m(m+1)\delta_{ij} + 2(n-m+1)\sum_{\alpha}\sum_{\beta\neq\alpha}g_{\alpha\beta}(e_{i})g_{\alpha\beta}(e_{j}) \\
&\quad + 2\sum_{\alpha}\sum_{\beta\neq\alpha}\sum_{\gamma\neq\alpha,\beta}g_{\alpha\beta\gamma}(e_{i})g_{\alpha\beta\gamma}(e_{j}) \\
&\quad + 2\sum_{\alpha}\sum_{\beta\neq\alpha}\sum_{\gamma\neq\alpha,\beta}\sum_{\delta\neq\alpha,\beta,\gamma}g_{\alpha\beta}(e_{i})g_{\gamma\delta}(e_{j})\langle P_{\delta}P_{\alpha}P_{\beta}P_{\gamma}x, x \rangle.
\end{aligned}
\end{equation}

By similar calculations, 
\begin{equation}\label{div-T2}
\sum_{k}e_{k}(T_{2}) = 0.
\end{equation}
and
\begin{equation}\label{div-T3}
\begin{aligned}
\sum_{k}e_{k}(T_{3}) &= -m(m+1)\delta_{ij} + (n-m+1)\sum_{\alpha}\sum_{\beta\neq\alpha}g_{\alpha\beta}(e_{i})g_{\alpha\beta}(e_{j}) \\
&\quad + \sum_{\alpha}\sum_{\beta\neq\alpha}\sum_{\gamma\neq\alpha,\beta}g_{\alpha\beta\gamma}(e_{i})g_{\alpha\beta\gamma}(e_{j}) \\
&\quad + \sum_{\alpha}\sum_{\beta\neq\alpha}\sum_{\gamma\neq\alpha,\beta}\sum_{\delta\neq\alpha,\beta,\gamma}g_{\gamma\delta}(e_{i})g_{\alpha\beta}(e_{j})\langle P_{\delta}P_{\alpha}P_{\beta}P_{\gamma}x, x \rangle.
\end{aligned}
\end{equation}

Combining   \eqref{div-T1},   \eqref{div-T2} and   \eqref{div-T3}, and recalling the definition of the tensor $G_{ij}$, we conclude
\begin{align*}
\sum_{k}\nabla_{k}(\nabla_{j}\Ric_{ik}-\nabla_{k}\Ric_{ij})
&=-3m(m+1)\delta_{ij}+3(n-m+1)G_{ij}  \\
&\quad +3\sum_{\alpha}\sum_{\beta\neq\alpha}\sum_{\gamma\neq\alpha,\beta}g_{\alpha\beta\gamma}(e_{i})g_{\alpha\beta\gamma}(e_{j}) \\
&\quad -3\sum_{\alpha}\sum_{\beta\neq\alpha}\sum_{\gamma\neq\alpha,\beta}\sum_{\delta\neq\alpha,\beta,\gamma}g_{\alpha\beta}(e_{i})g_{\gamma\delta}(e_{j})\langle P_{\alpha}P_{\beta}P_{\gamma}P_{\delta}x, x \rangle.
\end{align*}

Using Lemma \ref{h alpha g beta gamma} and   \eqref{trace h alpha G}, we obtain
\begin{align*}
\sum_{i,j,k}h^{\alpha}_{ij}\nabla_{k}(\nabla_{j}\Ric_{ik}-\nabla_{k}\Ric_{ij})
&=-3m(m+1)\sum_{i}h^{\alpha}_{ii}+3(n-m+1)\sum_{i}(h^{\alpha}G)_{ii}  \\
&\quad +3\sum_{\beta}\sum_{\gamma\neq\beta}\sum_{\delta\neq\beta,\gamma}\sum_{i,j}h^{\alpha}_{ij}g_{\beta\gamma\delta}(e_{i})g_{\beta\gamma\delta}(e_{j}) \\
&\quad -3\sum_{\beta}\sum_{\gamma\neq\beta}\sum_{\delta\neq\beta,\gamma}\sum_{\varepsilon\neq\beta,\gamma,\delta}\sum_{i,j}h^{\alpha}_{ij}g_{\beta\gamma}(e_{i})g_{\delta\varepsilon}(e_{j})\langle P_{\beta}P_{\gamma}P_{\delta}P_{\varepsilon}x, x \rangle,  \\
&=-9\sum\limits_{\beta\neq\alpha}\sum\limits_{\gamma\neq\alpha,\beta}\sum\limits_{\delta\neq\alpha,\beta,\gamma}\sum\limits_{\varepsilon\neq\alpha,\beta,\gamma,\delta}\langle P_{\beta}P_{\gamma}P_{\delta}P_{\varepsilon}x, x \rangle\langle P_{\alpha}P_{\beta}P_{\gamma}P_{\delta}P_{\varepsilon}x, x \rangle.
\end{align*}

Similarly,
\begin{equation*}
\sum_{\beta}\mathrm{Tr}(h^{\alpha}h^{\beta}h^{\beta})=\sum_{i,j}h^{\alpha}_{ij}\Bigl((m+1)\delta_{ij}-G_{ij}\Bigr)=\sum_{i}h^{\alpha}_{ii}-\sum_{i}(h^{\alpha}G)_{ii}=0,
\end{equation*}

Combining \eqref{Vector decomposition}, \eqref{Calculation of tangent components}, \eqref{Calculation of normal components} and Lemma \ref{h alpha h beta}, we have
\begin{align*}
\trace(h^{\beta}h^{\gamma})&=\sum_{i}\Bigl(\langle P_{\beta}e_{i}, P_{\gamma}e_{i} \rangle-\sum_{\delta\neq\beta,\gamma}g_{\beta\delta}(e_{i})g_{\gamma\delta}(e_{i})\Bigr)\\
&=\sum_{i}\delta_{\beta\gamma}-\sum_{\delta\neq\beta,\gamma}\langle(P_{\beta}P_{\delta}x)^{\top},(P_{\gamma}P_{\delta}x)^{\top} \rangle\\
&=(n-m+1)\delta_{\beta\gamma}.
\end{align*}

Furthermore,
\begin{equation*}
\sum_{\beta,\gamma}\mathrm{Tr}(h^{\alpha}h^{\beta}h^{\gamma})\cdot\mathrm{Tr}(h^{\beta}h^{\gamma})=(n-m+1)\sum_{\beta}\trace(h^{\alpha}h^{\beta}h^{\beta})=0.
\end{equation*}

Substituting these into  \eqref{tangent YM variational} and by Lemma \ref{trace degree 5}, the left hand side of \eqref{tangent YM variational} turns to be
\begin{equation*}
-8\sum\limits_{\beta\neq\alpha}\sum\limits_{\gamma\neq\alpha,\beta}\sum\limits_{\delta\neq\alpha,\beta,\gamma}\sum\limits_{\varepsilon\neq\alpha,\beta,\gamma,\delta}\langle P_{\beta}P_{\gamma}P_{\delta}P_{\varepsilon}x, x \rangle\langle P_{\alpha}P_{\beta}P_{\gamma}P_{\delta}P_{\varepsilon}x, x \rangle,
\end{equation*}
which is the obstruction for Tangent-Yang-Mills as well as for Normal-Yang-Mills in \eqref{sum distinct normal indices}.
Therefore, for OT-FKM type focal submanifolds $M_+$, it is equivalent between Tangent-Yang-Mills and Normal-Yang-Mills.
\end{proof}


\begin{thebibliography}{99}

\bibitem{B-L}
J. P. Bourguignon, H. B. Lawson, \emph{Stability and isolation phenomena for Yang-Mills fields}, Comm. Math. Physics, \textbf{79(2)}(1981), 189-230.   

\bibitem{CR2015}
T. E. Cecil, P. J. Ryan,
\emph{Geometry of Hypersurfaces}, 
 Springer Monographs in Mathematics, New York: Springer, (2015).


\bibitem{C-Z} 
Q. Chen, Z. R. Zhou, \emph{On gap properties and instabilities of p-Yang-Mills fields}, Canad. J. Math., \textbf{59(6)}(2007), 1245-1259.

\bibitem{C-S-S}   
S. S. Chern, \emph{Topics in differential geometry: selected $\&$ lectures of Shiing-Shen Chern}, (2016).

\bibitem{Chi18}
Q. S. Chi,  \emph{The Isoparametric Story, a Heritage of \'{E}lie Cartan}, Proceedings of the International Consortium of Chinese Mathematicians, (2018), 197-260, International Press of Boston, (2020).

\bibitem{F-K-M}
D. Ferus, H. Karcher, H. F. Münzner, \emph{Cliffordalgebren und neue isoparametrische Hyperflächen}, Math. Z., \textbf{177(4)} (1981), 479-502. 

\bibitem{F-U}
D. S. Freed, K. K. Uhlenbeck, \emph{Instantons and four-manifolds}, Springer Science $\&$ Business Media, (2012).

\bibitem{GQTY2025}
J. Q. Ge, C. Qian, Z. Z. Tang, W. J. Yan, 
\emph{An overview of the development of isoparametric theory (in Chinese)}, 
 Sci. Sin. Math., \textbf{55}(2025), 145–168.


\bibitem{G-Z}
J. Q. Ge, Y. Zhou, \emph{Austere matrices, austere submanifolds and Dupin hypersurfaces}, Adv. Math., \textbf{482} (2025), 110645.

\bibitem{J-Z}
G. Y. Jia, Z. R. Zhou, \emph{Gaps of F-Yang-Mills fields on submanifolds}, Tsukuba J. Math., \textbf{36(1)}(2012), 121-134.

\bibitem{R-S-S}
G. Rudolph, M. Schmidt, M. Schmidt, \emph{Differential geometry and mathematical physics}, Berlin: Springer, (2012).

\bibitem{S}
K. Sakamoto, \emph{Variational problems of normal curvature tensor and concircular scalar fields}, Tohoku Math. J., Second Series, \textbf{55(2)}(2003), 207-254.

\bibitem{T-Y}
Z. Z. Tang, W. J. Yan, \emph{Isoparametric foliation and a problem of Besse on generalizations of Einstein condition}, Adv. Math., \textbf{285}(2015), 1970-2000.

\bibitem{T-Y 1}
Z. Z. Tang, W. J. Yan, \emph{New examples of Willmore submanifolds in the unit sphere via isoparametric functions}, Ann. Glob. Anal. Geom., \textbf{42(3)}(2012), 403-410.

\bibitem{T}
G. Tian, \emph{Gauge theory and calibrated geometry, I}, Annals of Math., \textbf{151(1)}(2000), 193-268.

\bibitem{W}
C. P. Wang, \emph{Moebius geometry of submanifolds in $S^n$}, Manuscripta Math., \textbf{96(4)}(1998), 517-534.

\bibitem{X}
Y. Q. Xie, \emph{Willmore submanifolds in the unit sphere via isoparametric functions}, Acta Math. Sinica, English Series, \textbf{31(12)}(2015), 1963-1969.
\end{thebibliography}
\end{document}